\numberwithin{equation}{section}
\newtheorem{Theorem}{Theorem}[section]
\newtheorem*{Theorem*}{Theorem}
\newtheorem{Corollary}[Theorem]{Corollary}
\newtheorem{Lemma}[Theorem]{Lemma}
\newtheorem{Proposition}[Theorem]{Proposition}
 { \theoremstyle{definition}

\newtheorem{Remark}[Theorem]{Remark} }
\begin{document}

\allowdisplaybreaks

\newcommand{\arXivNumber}{2208.05526}

\renewcommand{\PaperNumber}{041}

\FirstPageHeading

\ShortArticleName{Skew Symplectic and Orthogonal Schur Functions}

\ArticleName{Skew Symplectic and Orthogonal Schur Functions}

\Author{Naihuan JING~$^{\rm a}$, Zhijun LI~$^{\rm b}$ and Danxia WANG~$^{\rm b}$}

\AuthorNameForHeading{N.~Jing, Z.~Li and D.~Wang}

\Address{$^{\rm a)}$~Department of Mathematics, North Carolina State University, Raleigh, NC 27695, USA}
\EmailD{\href{mailto:jing@ncsu.edu}{jing@ncsu.edu}}

\Address{$^{\rm b)}$~School of Science, Huzhou University, Huzhou, Zhejiang 313000, P.R.~China}
\EmailD{\href{mailto:zhijun1010@163.com}{zhijun1010@163.com}, \href{mailto:dxwangmath@126.com}{dxwangmath@126.com}}

\ArticleDates{Received August 28, 2023, in final form May 12, 2024; Published online May~21, 2024}

\Abstract{Using the vertex operator representations for symplectic and orthogonal Schur functions, we define two families of symmetric functions and show that
they are the skew symplectic and skew orthogonal Schur polynomials defined implicitly by Koike and Terada and satisfy the general branching rules.
Furthermore, we derive the Jacobi--Trudi identities and Gelfand--Tsetlin patterns for these symmetric functions. Additionally, the vertex operator method yields their Cauchy-type identities. This demonstrates that vertex operator representations serve not only as a tool for studying symmetric functions but also offers unified realizations for skew Schur functions of types A, C, and D.}

\Keywords{skew orthogonal/symplectic Schur functions; Jacobi--Trudi identity; Gelfand--Tset\-lin patterns; vertex operators}

\Classification{05E05; 17B37}

\section{Introduction}

For a partition $\lambda=(\lambda_1,\lambda_2,\ldots,\lambda_n)$, Schur functions, symplectic Schur functions, and orthogonal Schur functions in finitely many variables are respectively defined by~\cite{Mac1995,Me2019,Sta1999,Su1990}
\begin{align}
\label{e:s50}&{\rm s}_\lambda(x)=\frac{\det\big(x^{\lambda_j+n-j}_i\big)_{1\leq i,j\leq n}}{\det\big(x^{n-j}_i\big)_{1\leq i,j\leq n}},\\
\label{e:sp50}&{\rm sp}_\lambda\big(x^{\pm}\big)=\frac{\det\big(x^{\lambda_j+(n-j+1)}_i-x^{-\lambda_j-(n-j+1)}_i\big)^n_{i,j=1}}{\det\big(x^{n-j+1}_i-x^{-(n-j+1)}_i\big)^n_{i,j=1}},\\
\label{e:o50}&{\rm o}_\lambda\big(x^{\pm}\big)=2(-1)^{\frac{n(n-1)}{2}}\frac{\det\big(x^{-\lambda_j+j-n}_i+\delta_{\lambda_n\neq0}\delta_{j\neq n}x^{\lambda_j-j+n}_i\big)^n_{i,j=1}}{\det\big(x^{-j+1}_i+x^{j-1}_i\big)^n_{i,j=1}},
\end{align}
where $\delta_{a\neq b}$ equals 1 if $a\neq b$ and is $0$ otherwise.
They appear in various contexts of mathematics, especially in the representation theory of classical Lie groups (general linear groups, symplectic groups and
even orthogonal groups) as characters of finite-dimensional irreducible representations~\cite{KT1987,Lw1950,Su1990,Wey1946}. They are also
studied in generalized vertex algebras and $q$-characters~\cite{CG2020, LTW2016}.

With the advent of Schur symplectic and Schur orthogonal symmetric functions, arises the inquiry into skew versions for these symmetric functions.
Since the family of symplectic Schur or orthogonal Schur functions indexed by partitions does not constitute a complete basis in the ring of symmetric functions (in fact they belong to the ring of Laurent polynomials), the traditional approach of employing the adjoint operation is not viable for defining skew orthogonal or symplectic Schur functions. Instead the skew symplectic and (even) skew orthogonal symmetric functions were implicitly defined by
Koike and Terada~\cite{KT1990} by restricting the irreducible characters of the classical groups into their subgroups and were
expressed by tableaux representations. In~\cite{AF2020}, the tableaux representations were transformed into the Gelfand--Tsetlin pattern
representations by Ayyer and Fischer. It is natural to ask whether the skew symplectic Schur and orthogonal Schur functions can be computed by
Jacobi--Trudi type formulas, which would then lead to a parallel formulism as the Schur functions and skew Schur functions and also provide
a~lifting method to define the skew Schur symplectic and orthogonal functions in infinitely many variables.

The {\it aim of this paper} is to address these questions by
the vertex operator
realization of the classical symmetric functions~\cite{Jing1991} and the symplectic Schur and orthogonal Schur functions~\cite{Ba1996,JN2015}. The vertex algebraic method of studying Schur polynomials
can be traced back to the Kyoto school's work on integrable systems~\cite{DJKM} where the tau functions of the KP hierarchy are in terms of Schur polynomials.
Skew Schur functions ${\rm s}_{\lambda/\mu}$ satisfy the celebrated decomposition formula, i.e., the general branching rule
\begin{align*}%\label{e:branch}
&{\rm s}_\lambda(x_1,\dots,x_n)=\sum_{\mu\subset\lambda}{\rm s}_\mu(x_1,\dots,x_{n-k}){\rm s}_{\lambda/\mu}(x_{n-k+1},\dots,x_{n}),
\end{align*}
which can be interpreted as the restriction of the irreducible representation of $\mathrm{GL}(n,\mathbb{C})$ to $\mathrm{GL}(n-k,\mathbb{C})\times \mathrm{GL}(k,\mathbb{C})$.
We will define skew (symplectic/orthogonal) Schur functions using vertex operators and derive their combinatorial properties such as the Jacobi--Trudi identities, Cauchy-type identities and verify they satisfy the Gelfand--Tsetlin pattern representations found by~\cite{AF2020}.
We also show that our vertex operator representations for skew-type symmetric functions comply with the general branching rules in the agreement with
Koike and Terada's approach.

The construction of skew Schur functions and their generalizations can be formulated in terms of the representation theory of infinite-dimensional Lie algebras.
One starts with the infinite-dimensional Heisenberg algebra~\cite{FK1980} with the center $1$ and defines the Fock space $\mathcal{M}^*$ and its completion $\widetilde{\mathcal{M}^*}$ generated by the vacuum $\langle0|$ (which is the vacuum vector in the right module) and the Heisenberg generators with positive modes. There are three families of special elements ($\langle\lambda|$, $\langle\lambda^{\rm sp}|$ and $\langle\lambda^{\rm o}|$
in the right module) defined by partitions $\lambda$, corresponding respectively to the Schur, symplectic Schur, and orthogonal Schur cases. We will prove that they are orthonormal by computing the inner product with the elements ($|\lambda\rangle$, $|\lambda^{\rm sp}\rangle~and~|\lambda^{\rm o}\rangle$ in the left module) in the Fock space $\mathcal{M}$ respectively (see Theorem~\ref{th2}), which is key to construct (skew) symmetric functions and also savages the problem of adjoint operation with skew versions. For the half vertex operators $\Gamma_+(\{x\})$ and $\Gamma_+\big(\big\{x^{\pm}\big\}\big)$, we will show that
\begin{align*}
&\langle 0|\Gamma_+(\{x\})=\sum_{\lambda}\langle \lambda|{\rm s}_\lambda(x),\\
&\langle 0|\Gamma_+\big(\big\{x^{\pm}\big\}\big)=\sum_{\lambda}\langle \lambda^{\rm sp}|{\rm sp}_\lambda\big(x^{\pm}\big)=\sum_{\lambda}\langle \lambda^{\rm o}|{\rm o}_\lambda\big(x^{\pm}\big),
\end{align*}
where each summation is over (generalized) partitions $\lambda=(\lambda_1,\dots,\lambda_N)$ with zeros parts allowed in the end. The benefit of this approach is that we can easily obtain the well-known classical Cauchy identities for (symplectic/orthogonal) Schur functions~\cite{KT1987,Su1990}. This approach is logically independent of the Jacobi--Trudi formula.

It is well known that $\langle \mu|\Gamma_+(\{x\})|\lambda\rangle={\rm s}_{\lambda/\mu}(x)$ \cite{Lam2006,Oko2001} using the half vertex operator
$\Gamma_+(\{x\})$. We extend the vertex operator method to derive two families of symmetric functions
\begin{align*}
&{\rm sp}_{\lambda/\mu}\big(x^{\pm}\big)=\langle\mu^{\rm sp}|\Gamma_+\big(\big\{x^{\pm}\big\}\big)|\lambda^{\rm sp}\rangle,\\
&{\rm o}_{\lambda/\mu}\big(x^{\pm}\big)=\langle\mu^{\rm o}|\Gamma_+\big(\big\{x^{\pm}\big\}\big)|\lambda^{\rm o}\rangle,
\end{align*}
which can be interpreted as the skew symplectic/orthogonal Schur functions from the general branching rule. We also show that each skew function can be written as a Jacobi--Trudi type determinant of complete homogeneous symmetric functions with variables $x_1,x^{-1}_1,\dots,x_N,x^{-1}_N$.

Exploiting the combinatorial properties of $\langle\mu^{\rm sp}|$ and $\langle\mu^{\rm o}|$, we also obtain Gelfand--Tsetlin pattern representations (generating
functions of some Young tableaux in an equivalent form~\cite{Sta1999}) for skew symplectic/orthogonal Schur functions, which provide alternative definitions of
symplectic/orthogonal Schur functions (see~\cite{AF2020}). The vertex operator method depending on the combinatorial properties of some orthogonal elements of the Fock space $\widetilde{\mathcal{M}^*}$ may shed light on specific descriptions of generalized shape $\pi$ symmetric functions~\cite{FJK2010}.

This paper is organized as follows. In Section~\ref{s2}, we recall the vertex operators related to fermionic Fock spaces $\mathcal{M}$ and $\mathcal{M}^*$
and prove the orthonormality for three distinguished sets of~$\mathcal{M}$ and the completion of $\mathcal{M}^*$. In Section~\ref{s3}, we define the skew symplectic/orthogonal Schur functions by the vertex algebraic method and derive their Jacobi--Trudi identities. We also show the vertex operator realizations for these skew symmetric functions are consistent with the general branching rule, therefore they agree with the skew symplectic and orthogonal Schur polynomials introduced by Koike and Terada. In Section~\ref{s4}, we give formulas of skew symplectic/orthogonal Schur functions in terms of Gelfand--Tsetlin pattern representations, which agree with Ayyer and Fischer's results. In Section~\ref{s5}, we provide a vertex operator approach to the Cauchy identities for the (symplectic/orthogonal) Schur functions, which is logically independent of the
Jacobi--Trudi formulas~\cite{JR2016}. We also obtain Cauchy-type identities for skew (symplectic/orthogonal) Schur functions.

\section{Preliminaries}\label{s2}

Let $\mathcal{H}$ be the Heisenberg algebra generated by $\{a_n\mid n\neq 0\}$ with the central element $c=1$ subject to the commutation relations \cite{FK1980}
\begin{align}\label{e:he1}
[a_m,a_n]=m\delta_{m,-n}c,\qquad [a_n,c]=0.
\end{align}

The Fock space $\mathcal{M}$ (resp.\ $\mathcal{M}^*$) is generated by the vacuum vector $|0\rangle$ (resp.\ dual vacuum vector $\langle0|$) and
subject to
\begin{align*}
a_n|0\rangle=\langle0|a_{-n},\qquad n>0.
\end{align*}
In other words, $\mathcal{M}$ and $\mathcal{M}^*$ are respectively left and right modules for $\mathcal H$.
It is easy to see that $\mathcal{H}$ acts on $\mathcal{M}$ (resp.\ $\mathcal{M}^*$) irreducibly. Moreover, $\mathcal{M}$ (resp.\ $\mathcal{M}^*$) is isomorphic to the symmetric algebra $\mathbb{C}[a_{-1},a_{-2},\dots]$ (resp.\ $\mathbb{C}[a_1,a_2,\dots]$). Note that
$\mathcal M^*$ is a graded space with the gradation induced from that of $\mathcal H$.
Let \smash{$\big\{\mathcal M^*_n\big\}$} be the filtration of the subspaces $\mathcal M^*_n$ spanned by homogeneous elements with degree $\geq n$. Then we
let $\widetilde{\mathcal{M}^*}$ be the associated completion of the Fock space $\mathcal{M}^*$.

Consider the following vertex operators given in~\cite{Ba1996,JN2015} (see also~\cite{SZ2006})
\begin{align}
&X(z)=\exp\Biggl(\sum^\infty_{n=1}\frac{a_{-n}}{n}z^n\Biggr)\exp\Biggl(-\sum^\infty_{n=1}\frac{a_n}{n}z^{-n}\Biggr)=\sum_{n\in \mathbb{Z}}X_nz^{-n},\nonumber\\
&X^*(z)=\exp\Biggl(-\sum^\infty_{n=1}\frac{a_{-n}}{n}z^n\Biggr)\exp\Biggl(\sum^\infty_{n=1}\frac{a_n}{n}z^{-n}\Biggr)=\sum_{n\in \mathbb{Z}}X^*_nz^n,\nonumber\\
&Y(z)=\exp\Biggl(\sum^\infty_{n=1}\frac{a_{-n}}{n}z^n\Biggr)\exp\Biggl(-\sum^\infty_{n=1}\frac{a_n}{n}(z^{-n}+z^n)\Biggr)=\sum_{n\in \mathbb{Z}}Y_nz^{-n},\nonumber\\
&Y^*(z)=\big(1-z^2\big)\exp\Biggl(-\sum^\infty_{n=1}\frac{a_{-n}}{n}z^n\Biggr)\exp\Biggl(\sum^\infty_{n=1}\frac{a_n}{n}(z^{-n}+z^n)\Biggr)=\sum_{n\in \mathbb{Z}}Y^*_nz^n,\nonumber\\
&W(z)=\big(1-z^2\big)\exp\Biggl(\sum^\infty_{n=1}\frac{a_{-n}}{n}z^n\Biggr)\exp\Biggl(-\sum^\infty_{n=1}\frac{a_n}{n}(z^{-n}+z^n)\Biggr)=\sum_{n\in \mathbb{Z}}W_nz^{-n},\nonumber\\
&W^*(z)=\exp\Biggl(-\sum^\infty_{n=1}\frac{a_{-n}}{n}z^n\Biggr)\exp\Biggl(\sum^\infty_{n=1}\frac{a_n}{n}(z^{-n}+z^n)\Biggr)=\sum_{n\in \mathbb{Z}}W^*_nz^n. \label{e:ve1}
\end{align}
\begin{Lemma}The actions of the operators $X_n$, $X^*_n$, $Y_n$, $Y^*_n$, $W_n$ and $W^*_n$ on the $($dual$)$ vacuum are given by
\begin{gather}
X_n|0\rangle=X^*_{-n}|0\rangle=\langle 0|X_{-n}=\langle 0|X^*_n \nonumber\\
\label{e:re1} \hphantom{X_n|0\rangle}{}
=Y_n|0\rangle=Y^*_{-n}|0\rangle=W_n|0\rangle=W^*_{-n}|0\rangle=0 \qquad \text{for}~n>0,\\
\label{e:re2}\langle 0|Y^*_n=-\langle 0|Y^*_{-n+2},\quad \langle 0|Y_n=\langle 0|Y_{-n},\quad \langle 0|W_n=-\langle 0|W_{-n-2},\quad \langle 0|W^*_n=\langle 0|W^*_{-n}.
\end{gather}
\end{Lemma}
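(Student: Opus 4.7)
The plan is to compute each quantity directly from the exponential forms, exploiting that the positive-mode half of each vertex operator acts as the identity on the appropriate vacuum.

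For the vanishing statements in \eqref{e:re1}, I would observe that since $a_m|0\rangle=0$ for $m>0$, the right factor of each of $X(z)$, $Y(z)$, $W(z)$ collapses and we get
\begin{align*}
X(z)|0\rangle=Y(z)|0\rangle=\exp\Biggl(\sum_{n\geq 1}\frac{a_{-n}}{n}z^n\Biggr)|0\rangle,\qquad
W(z)|0\rangle=\big(1-z^2\big)\exp\Biggl(\sum_{n\geq 1}\frac{a_{-n}}{n}z^n\Biggr)|0\rangle,
\end{align*}
each of which contains only nonnegative powers of $z$. Reading off the coefficient of $z^{-n}$ for $n>0$ gives $X_n|0\rangle=Y_n|0\rangle=W_n|0\rangle=0$. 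The analogous argument applied to $X^*(z)|0\rangle$, $Y^*(z)|0\rangle$, $W^*(z)|0\rangle$ (which expand in nonnegative powers of $z$, cutting off the $z^{-n}$-coefficient) yields the starred vanishings; reversing the role of the vacua and noting $\langle 0|a_{-n}=0$ for $n>0$ handles $\langle 0|X_{-n}=\langle 0|X^*_n=0$ in the same way.

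For the relations in \eqref{e:re2}, the key observation is a functional symmetry under $z\mapsto z^{-1}$. Since $\langle 0|a_{-n}=0$ for $n>0$, the left factor of $Y(z)$ and $W^*(z)$ collapses when acting on $\langle 0|$, leaving
\begin{align*}
\langle 0|Y(z)=\langle 0|\exp\Biggl(-\sum_{n\geq 1}\frac{a_n}{n}\bigl(z^{-n}+z^n\bigr)\Biggr),\qquad
\langle 0|W^*(z)=\langle 0|\exp\Biggl(\sum_{n\geq 1}\frac{a_n}{n}\bigl(z^{-n}+z^n\bigr)\Biggr).
\end{align*}
Because $z^{-n}+z^n$ is invariant under $z\leftrightarrow z^{-1}$, both right-hand sides are invariant, so $\langle 0|Y(z)=\langle 0|Y(z^{-1})$ and $\langle 0|W^*(z)=\langle 0|W^*(z^{-1})$. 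Comparing coefficients in the expansions $Y(z)=\sum Y_nz^{-n}$ and $W^*(z)=\sum W^*_nz^n$ immediately yields $\langle 0|Y_n=\langle 0|Y_{-n}$ and $\langle 0|W^*_n=\langle 0|W^*_{-n}$.

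The cases of $Y^*$ and $W$ are the same idea with the extra prefactor $(1-z^2)$ tracked carefully. Collapsing the left exponential gives
\begin{align*}
\langle 0|Y^*(z)=\bigl(1-z^2\bigr)\langle 0|W^*(z),\qquad \langle 0|W(z)=\bigl(1-z^2\bigr)\langle 0|Y(z),
\end{align*}
and the $W^*$, $Y$ factors on the right are $z\leftrightarrow z^{-1}$ invariant. Using $1-z^{-2}=-z^{-2}(1-z^2)$, this produces the twisted symmetries $\langle 0|Y^*(z)=-z^2\langle 0|Y^*(z^{-1})$ and $\langle 0|W(z)=-z^2\langle 0|W(z^{-1})$. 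Matching coefficients in $Y^*(z)=\sum Y^*_nz^n$ and $W(z)=\sum W_nz^{-n}$ then delivers $\langle 0|Y^*_n=-\langle 0|Y^*_{-n+2}$ and $\langle 0|W_n=-\langle 0|W_{-n-2}$. The only mild trap is the index bookkeeping in this last step — getting the shift by $\pm 2$ on the correct side — so that is where I would be most careful, but conceptually everything reduces to the $z\leftrightarrow z^{-1}$ symmetry of $z^n+z^{-n}$ combined with the explicit $(1-z^2)$ factor.
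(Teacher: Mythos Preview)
Your proposal is correct and follows essentially the same approach as the paper. Both arguments reduce \eqref{e:re2} to the functional identity $\langle 0|Y^*(z)=-z^2\langle 0|Y^*(z^{-1})$ (and its analogues), obtained from the $z\leftrightarrow z^{-1}$ invariance of $\langle 0|\exp\bigl(\sum_{n\geq 1}\frac{a_n}{n}(z^{-n}+z^n)\bigr)$ together with the explicit $(1-z^2)$ prefactor; your phrasing via the operator identities $Y^*(z)=(1-z^2)W^*(z)$ and $W(z)=(1-z^2)Y(z)$ is a slight repackaging of the paper's direct manipulation of that exponential, but the content is the same.
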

\begin{proof}
Relations \eqref{e:re1} are obvious, and we only need to prove \eqref{e:re2}. Note the fact
\begin{gather*}
\langle 0|\exp\Biggl(\sum^\infty_{n=1}\frac{a_n}{n}(z^{-n}+z^n)\Biggr)=\langle 0|\frac{1}{1-z^2}\exp\Biggl(\sum^\infty_{n=1}\frac{a_{-n}}{n}z^n\Biggr)Y^*(z)
=\frac{1}{1-z^2}\langle 0|Y^*(z),\\
\langle 0|\exp\Biggl(\sum^\infty_{n=1}\frac{a_n}{n}(z^{-n}+z^n)\Biggr)=\langle 0|\frac{1}{1-z^{-2}}\exp\Biggl(\sum^\infty_{n=1}\frac{a_{-n}}{n}z^{-n}\Biggr)Y^*\big(z^{-1}\big)\\
\hphantom{\langle 0|\exp\Biggl(\sum^\infty_{n=1}\frac{a_n}{n}(z^{-n}+z^n)\Biggr)}{}
=\frac{-z^2}{1-z^2}\langle 0|Y^*\big(z^{-1}\big),
\end{gather*}
therefore
\begin{align}
\label{e:re3}\langle 0|Y^*(z)=-z^2\langle 0|Y^*\big(z^{-1}\big).
\end{align}
Taking the coefficient of $z^n$ of \eqref{e:re3}, we have
\begin{align*}
\langle 0|Y^*_n=-\langle 0|Y^*_{-n+2}.
\end{align*}
Similarly, we can get the other commutation relations in \eqref{e:re2}.
\end{proof}

The following result can be found in~\cite[Theorem~3.4, Proposition~3.6]{JN2015}.
\begin{Lemma}The components of the vertex operators \eqref{e:ve1} satisfy the following commutation relations
\begin{alignat}{4}
&X_iX_j+X_{j+1}X_{i-1}=0,\quad\ && X^*_iX^*_j+X^*_{j-1}X^*_{i+1}=0,\quad\ && X_iX^*_j+X^*_{j+1}X_{i+1}=\delta_{i,j},&\nonumber\\
&Y_iY_j+Y_{j+1}Y_{i-1}=0,\quad\ && Y^*_iY^*_j+Y^*_{j-1}Y^*_{i+1}=0,\quad\ && Y_iY^*_j+Y^*_{j+1}Y_{i+1}=\delta_{i,j},&\label{e:com}\\
&W_iW_j+W_{j+1}W_{i-1}=0,\quad\ && W^*_iW^*_j+W^*_{j-1}W^*_{i+1}=0,\quad\ && W_iW^*_j+W^*_{j+1}W_{i+1}=\delta_{i,j}.&\nonumber
\end{alignat}
\end{Lemma}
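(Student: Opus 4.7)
The plan is to derive all nine commutation relations uniformly from the operator product expansions of the pairs of vertex operators in~\eqref{e:ve1}, then extract coefficients of monomials in $z$ and $w$. Since the three families $(X, X^*)$, $(Y, Y^*)$ and $(W, W^*)$ share the same skeleton, differing only by the substitution $z^{-n} \mapsto z^{-n}+z^n$ in the annihilation half and by the prefactor $(1-z^2)$ attached to $Y^*$ and $W$, the $(X, X^*)$ case serves as the model and the other two follow with minor adjustments.

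For $(X, X^*)$, I would split each vertex operator as a product of a purely-creation and a purely-annihilation exponential, then apply the Baker--Campbell--Hausdorff identity via the Heisenberg relations~\eqref{e:he1}. The standard contractions yield
\begin{gather*}
X(z) X(w) = \bigl(1 - \tfrac{w}{z}\bigr) :X(z) X(w):, \qquad X^*(z) X^*(w) = \bigl(1 - \tfrac{w}{z}\bigr) :X^*(z) X^*(w):,\\
X(z) X^*(w) = \tfrac{1}{1 - w/z}\, :X(z) X^*(w):, \qquad X^*(w) X(z) = \tfrac{1}{1 - z/w}\, :X(z) X^*(w):,
\end{gather*}
where the rational factors on the right are expanded in $|z| > |w|$ and $|w| > |z|$ respectively, and the normal-ordered products are symmetric in $z, w$. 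Combining these, the operator identities
\[
z\, X(z) X(w) + w\, X(w) X(z) = 0, \qquad X(z) X^*(w) + \tfrac{z}{w}\, X^*(w) X(z) = \delta(w/z)
\]
follow immediately, using the formal delta decomposition $\sum_{n\in\mathbb{Z}}(w/z)^n = \sum_{n\geq 0}(w/z)^n + \sum_{n\geq 1}(z/w)^n$ together with the evaluation $:X(z)X^*(z): = 1$. Extracting the coefficient of $z^{-i} w^{-j}$ (or $z^{-i}w^{j}$ in the mixed case), using the conventions $X(z) = \sum X_n z^{-n}$ and $X^*(z) = \sum X^*_n z^n$, then produces the three $X$-relations in~\eqref{e:com}.

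For the pairs $(Y, Y^*)$ and $(W, W^*)$, the same strategy applies, but the extra $z^n$ terms in the annihilation exponentials generate additional contractions producing auxiliary rational factors of the form $(1-zw)^{\pm 1}$, and the $(1-z^2)$ prefactors contribute an extra $(1-w^2)$-type factor after multiplication. Crucially, these auxiliary factors are symmetric in $z$ and $w$, and the combination $(1-w^2)/(1-zw)$ evaluates to $1$ on the diagonal $z = w$, so they cancel when the two orderings are combined against $\delta(w/z)$; the resulting functional identities have exactly the same shape as in the $X$-case. The main obstacle I expect is the careful bookkeeping required to track these extra rational factors in each of the six new computations and confirm their cancellation; once that is verified, coefficient extraction proceeds mechanically and delivers the remaining relations in~\eqref{e:com}.
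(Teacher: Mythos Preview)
Your approach is correct and is the standard vertex-operator-algebra computation for such relations. The paper does not actually prove this lemma: it simply records the result and cites \cite[Theorem~3.4, Proposition~3.6]{JN2015}, so there is no ``paper's own proof'' to compare against beyond that reference. Your OPE computation, the identification of the symmetric auxiliary factors $(1-zw)^{\pm1}$ and $(1-z^2)(1-w^2)$, and the observation that $(1-w^2)/(1-zw)$ (resp.\ $(1-z^2)/(1-zw)$) collapses to $1$ on the support of $\delta(w/z)$ are exactly what is needed; the same mechanism is what underlies the cited proof. One small point worth making explicit when you write it up: the substitution $\delta(w/z)\,g(z,w)=\delta(w/z)\,g(z,z)$ is justified here because the matrix elements of the normal-ordered product $:Y(z)Y^*(w):$ (and likewise for $W$) between Fock-space vectors are Laurent \emph{polynomials}, and $\frac{1-w^2}{1-zw}\in\mathbb{C}[[z,w]]$, so the product with $\delta(w/z)$ is well defined as a formal distribution.
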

A ${\it partition}$ $\lambda=(\lambda_1,\lambda_2,\dots,\lambda_l)$ is a~weakly decreasing sequence of positive integers, and
a~{\it generalized partition} $\lambda=(\lambda_1,\lambda_2,\dots,\lambda_n)$ is a weakly decreasing finite sequence of nonnegative integers. The $\lambda_i$'s are
called the parts of $\lambda$, and the number $l(\lambda)$ of nonzero parts is the length of $\lambda$. If $\lambda$, $\mu$ are partitions, we will write $\mu\subset\lambda$ to mean that $\lambda_i\geq \mu_i$ for all $i\geq 1$. Therefore, a~generalized partition is a partition appended with
a string of finitely many zeros.\footnote{In the paper, generalized partitions should be used always for the skew symplectic and orthogonal Schur functions. We remark that the skew Schur functions
indexed by generalized partitions coincide with those indexed by ordinary partitions.} For a~given (generalized) partition $\lambda=(\lambda_1,\lambda_2,\dots,\lambda_l)$, let
\begin{alignat}{3}
\label{e:sc2}&|\lambda\rangle=X_{-\lambda_1}X_{-\lambda_2}\cdots X_{-\lambda_l}|0\rangle,\qquad&& \langle \lambda|=\langle 0|X^*_{-\lambda_l}\cdots X^*_{-\lambda_1},&\\
\label{e:sp6}&|\lambda^{\rm sp}\rangle=Y_{-\lambda_1}Y_{-\lambda_2}\cdots Y_{-\lambda_l}|0\rangle, \qquad&&\langle \lambda^{\rm sp}|=\langle 0|Y^*_{-\lambda_l}\cdots Y^*_{-\lambda_1},&\\
\label{e:o6}&|\lambda^{\rm o}\rangle=W_{-\lambda_1}W_{-\lambda_2}\cdots W_{-\lambda_l}|0\rangle,\qquad&&\langle \lambda^{\rm o}|=\langle 0|W^*_{-\lambda_l}\cdots W^*_{-\lambda_1}.&
\end{alignat}

We remark that the vectors $|\lambda\rangle$, $|\lambda^{\rm sp}\rangle$ and $|\lambda^{\rm o}\rangle$ are in $\mathcal{M}$, $\langle \lambda|$ in $\mathcal{M}^*$, while the vectors $\langle \lambda^{\rm sp}|$ and $\langle \lambda^{\rm o}|$ belong to the completion $\widetilde{\mathcal{M}^*}$. Due to the fact
\begin{align*}
\langle 0|Y^*_0\neq \langle 0|,\qquad \langle 0|W^*_0\neq \langle 0|,
\end{align*}
the vector $\langle \lambda^{\rm sp}|$ (resp.\ $\langle \lambda^{\rm o}|$) differs from the vector $\langle \mu^{\rm sp}|$ (resp.\ $\langle \mu^{\rm o}|$) even if $\lambda$ and $\mu$ only differ by a string of zeros at the end.
For examples,
\begin{align*}
&\langle 0|Y^*_{0}Y^*_{-1}Y^*_{-4}\neq\langle 0|Y^*_{-1}Y^*_{-4}\neq\langle 0|Y^*_{0}Y^*_{0}Y^*_{-1}Y^*_{-4},\\
&\langle 0|W^*_{0}Y^*_{-1}W^*_{-4}\neq\langle 0|W^*_{-1}W^*_{-4}\neq\langle 0|W^*_{0}W^*_{0}W^*_{-1}W^*_{-4}.
\end{align*}

For any $\langle u|$ in $\widetilde{\mathcal{M}^*}$ and $|v\rangle$ in $\mathcal{M}^*$, one define $\langle u|v\rangle$ by
 \begin{align*}
 \langle u|v\rangle=\langle u||v\rangle,
 \end{align*}
 where it is assumed that $\langle 0|1|0\rangle=1$. Now we have the following orthonormality result for three distinguished sets of $\widetilde{\mathcal{M}^*}$.
\begin{Theorem}\label{th2}
For $($generalized$)$ partitions $\mu=(\mu_1,\mu_2,\dots,\mu_l)$ and $\lambda=(\lambda_1,\lambda_2,\dots,\lambda_l)$, one has that
\begin{align}
\label{e:or}&\langle \mu|\lambda\rangle=\delta_{\lambda\mu},\\
\label{e:sp3}&\langle \mu^{\rm sp}|\lambda^{\rm sp}\rangle=\delta_{\lambda\mu},\\
\label{e:o3}&\langle \mu^{\rm o}|\lambda^{\rm o}\rangle=\delta_{\lambda\mu},
\end{align}
where the partitions in \eqref{e:or} are usual ones and the partitions in \eqref{e:sp3} and \eqref{e:o3} are
generalized partitions.
\end{Theorem}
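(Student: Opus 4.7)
The plan is to prove all three orthonormality relations by a parallel induction on the common length $l$ of $\lambda$ and $\mu$, using the mixed commutations of \eqref{e:com} to push the starred operators to the right until they hit $|0\rangle$, where \eqref{e:re1} annihilates them. I describe the Schur case \eqref{e:or} in detail; the symplectic and orthogonal cases follow by the same procedure with $(Y,Y^*)$ or $(W,W^*)$ replacing $(X,X^*)$.

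For \eqref{e:or}, I rewrite $X_iX^*_j+X^*_{j+1}X_{i+1}=\delta_{i,j}$ as $X^*_{-m}X_{-n}=\delta_{m,n}-X_{-n-1}X^*_{-m-1}$. Starting from $\langle\mu|\lambda\rangle=\langle 0|X^*_{-\mu_l}\cdots X^*_{-\mu_1}X_{-\lambda_1}\cdots X_{-\lambda_l}|0\rangle$, I push $X^*_{-\mu_1}$ through $X_{-\lambda_1}$ to obtain $\delta_{\mu_1,\lambda_1}\langle(\mu_2,\dots,\mu_l)|(\lambda_2,\dots,\lambda_l)\rangle$ plus a remainder containing the shifted pair $X_{-\lambda_1-1}X^*_{-\mu_1-1}$. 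Iterating by pushing the shifted $X^*_{-\mu_1-k}$ through $X_{-\lambda_{k+1}}$ yields intermediate $\delta_{\mu_1+k,\lambda_{k+1}}$ contributions (each accompanied by a reduced inner product on shorter partitions) plus a further-shifted remainder. For an intermediate $\delta$ to be nonzero one must have $\lambda_{k+1}=\mu_1+k$, which given $\lambda_{k+1}\leq\lambda_1$ forces $\lambda_1>\mu_1$; but then the reduced inner product is $\langle(\mu_2,\dots,\mu_l)|(\lambda_1+1,\dots,\lambda_k+1,\lambda_{k+2},\dots,\lambda_l)\rangle$, whose first parts satisfy $\lambda_1+1>\mu_1\geq\mu_2$, so the inductive hypothesis renders this contribution zero. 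After $l$ iterations, the surviving $X^*_{-\mu_1-l}$ meets $|0\rangle$ with $\mu_1+l\geq 1$, and \eqref{e:re1} gives $X^*_{-\mu_1-l}|0\rangle=0$. Hence only the initial $\delta_{\mu_1,\lambda_1}\langle(\mu_2,\dots,\mu_l)|(\lambda_2,\dots,\lambda_l)\rangle$ survives, and the induction closes to yield $\langle\mu|\lambda\rangle=\delta_{\lambda,\mu}$.

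For \eqref{e:sp3} and \eqref{e:o3}, the analogous commutations $Y_iY^*_j+Y^*_{j+1}Y_{i+1}=\delta_{i,j}$ and $W_iW^*_j+W^*_{j+1}W_{i+1}=\delta_{i,j}$ allow the identical shift-pushing argument to run through. Generalized partitions with $\mu_1$ or $\lambda_1$ equal to zero cause no trouble, as long as the terminal step $Y^*_{-\mu_1-l}|0\rangle=0$ (resp.\ $W^*_{-\mu_1-l}|0\rangle=0$) from \eqref{e:re1} applies, which requires only $\mu_1+l\geq 1$ and is automatic for $l\geq 1$ (the $l=0$ case being trivial). The reflection identities \eqref{e:re2} are not needed for the inner-product computation itself; they serve rather to pin down the formal series $\langle\lambda^{\rm sp}|$ and $\langle\lambda^{\rm o}|$ as specific elements of the completion $\widetilde{\mathcal{M}^*}$.

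The main obstacle is to verify that the intermediate $\delta$-contributions produced during the iteration vanish uniformly across both cases $\lambda_1\leq\mu_1$ and $\lambda_1>\mu_1$. The reasoning above resolves this by showing that a nonvanishing intermediate $\delta$ necessarily forces the reduced inner product to have mismatched first parts, killing it by induction. One must also check in the symplectic and orthogonal cases that the pairing with $\langle\lambda^{\rm sp}|,\langle\lambda^{\rm o}|\in\widetilde{\mathcal{M}^*}$ remains well-defined; this holds because the shift-pushing procedure is a finite algebraic manipulation that produces only finitely many scalar contributions before the surviving operator is annihilated by \eqref{e:re1}.
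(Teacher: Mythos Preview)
Your proof is correct and follows the same underlying strategy as the paper's: use the mixed relations in \eqref{e:com} to push the starred operator rightward and then invoke \eqref{e:re1} to annihilate the final remainder on $|0\rangle$. The organization differs slightly---your uniform induction on $l$ handles all three types and both signs of $\mu_1-\lambda_1$ at once, whereas the paper splits on that sign (in the Schur case pushing $X_{-\lambda_1-1}$ leftward to $\langle 0|$ when $\lambda_1>\mu_1$, a move unavailable for $Y,W$ and hence replaced there by an iteration very close to yours).
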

\begin{proof}
Using the commutation relations \eqref{e:com}, we have
\begin{align*}
\langle\mu|\lambda\rangle &= \langle 0|X^*_{-\mu_l}\cdots X^*_{-\mu_2}X^*_{-\mu_1}X_{-\lambda_1}X_{-\lambda_2}\cdots X_{-\lambda_l}|0\rangle\\
&= \langle 0|X^*_{-\mu_l}\cdots X^*_{-\mu_2}\big(\delta_{\mu_1,\lambda_1}-X_{-\lambda_1-1}X^*_{-\mu_1-1}\big)X_{-\lambda_2}\cdots X_{-\lambda_l}|0\rangle.
\end{align*}
The second summand is zero if $\mu_1\geq\lambda_1$, since the term $X^*_{-\mu_1-1}X_{-\lambda_2}\cdots X_{-\lambda_l}$ can be rewritten as $(-1)^{l-1}X_{-\lambda_2-1}\cdots X_{-\lambda_l-1}X^*_{-\mu_1-l}$ by \eqref{e:com}, and thus it
kills $|0\rangle$ according to \eqref{e:re1}. If $\lambda_1>\mu_1$, we move $X_{-\lambda_1-1}$ to the left
to kill $\langle 0|$. Either way shows that the inner product is simplified
to
\begin{align*}
\langle\mu|\lambda\rangle=\delta_{\lambda_1,\mu_1}\langle 0|X^*_{-\mu_l}\cdots X^*_{-\mu_2}X_{-\lambda_2}\cdots X_{-\lambda_l}|0\rangle.
\end{align*}
Subsequently $\langle\mu|\lambda\rangle=\delta_{\lambda\mu}$.

In the case of $\langle \mu^{\rm sp}|\lambda^{\rm sp}\rangle$, for $\mu_1\geq\lambda_1$ we also have that
\begin{align*}
\langle \mu^{\rm sp}|\lambda^{\rm sp}\rangle=\delta_{\mu_1,\lambda_1}\langle 0|Y^*_{-\mu_l}\cdots Y^*_{-\mu_2}Y_{-\lambda_2}\cdots Y_{-\lambda_l}|0\rangle.
\end{align*}
Now consider the subcase when $\mu_1<\lambda_1$. It follows from \eqref{e:re1} that
\begin{gather*}
\langle \mu^{\rm sp}|\lambda^{\rm sp}\rangle\\
\qquad=\begin{cases}
0, \quad \mu_1+i-1\neq \lambda_i \ \text{for all} \ 1\leq i\leq l,\\
(-1)^{i-1}\langle 0|Y^*_{-\mu_l}\cdots Y^*_{-\mu_2}Y_{-\lambda_1-1}\cdots Y_{-\lambda_{i-1}-1}Y_{-\lambda_{i+1}} \cdots Y_{-\lambda_l}|0\rangle, \quad \mu_1+i-1= \lambda_i.
\end{cases}\!
\end{gather*}
Continuing the process, $\langle \mu^{\rm sp}|\lambda^{\rm sp}\rangle$ equals to 0 or $(-1)^\epsilon\langle 0|Y^*_{-\mu_l}Y_{-\lambda_1-l+1}|0\rangle$, while
\[
\langle 0|Y^*_{-\mu_l}Y_{-\lambda_1-l+1}|0\rangle=-\langle 0|Y_{-\lambda_1-l}Y^*_{-\mu_l-1}|0\rangle=0
\]
 since $-\mu_l-1<0$ and by \eqref{e:re1}. We therefore obtain~\eqref{e:sp3} by combining the relations between~$\mu_i$ and $\lambda_i$. The relation~\eqref{e:o3} can be proved similarly.
\end{proof}

\begin{Remark}
The equation \eqref{e:sp3} makes sense only when we fix a positive integer $l$ and the length of generalized partition $\mu=(\mu_1, \dots,\mu_m,\underbrace{0,\dots,0}_{l-m})$ is not bigger than $l$, then
\begin{align*}
\langle \mu^{\rm sp}|=\langle 0|\underbrace{Y^*_0\cdots Y^*_0}_{l-m}Y^*_{-\mu_m}\cdots Y^*_{-\mu_1}.
\end{align*}
For example, it is easy to check that
\begin{align*}
\langle 0|Y^*_0 Y^*_0Y^*_{-1}Y^*_{-4}Y_{-4}Y_{-1}Y_{-1}Y_{-1}|0\rangle=0,\qquad\langle 0|Y^*_{-1}Y^*_{-4}Y_{-4}Y_{-1}Y_{-1}Y_{-1}|0\rangle=-1,
\end{align*}
thus for $\mu=(4, 1, 0, 0)$
\begin{align*}
\langle \mu^{\rm sp}|=\langle 0|Y^*_0 Y^*_0Y^*_{-1}Y^*_{-4}.
\end{align*}
The same remark applies to the orthogonal case \eqref{e:o3}.
\end{Remark}

\section[Skew symplectic/orthogonal Schur functions and Jacobi--Trudi identities]{Skew symplectic/orthogonal Schur functions\\ and Jacobi--Trudi identities}\label{s3}

This section first recalls the vertex operator construction of skew Schur functions ${\rm s}_{\lambda/\mu}(x)$.
Then we extend the method to realize skew symplectic Schur functions ${\rm sp}_{\lambda/\mu}\big(x^{\pm}\big)$ and skew orthogonal Schur functions ${\rm o}_{\lambda/\mu}\big(x^{\pm}\big)$. Their
Jacobi--Trudi formulas are also provided.

The Jacobi--Trudi identities for the Schur function ${\rm s}_\lambda(x)$, the symplectic Schur function ${\rm sp}_\lambda\big(x^{\pm}\big)$, and the orthogonal Schur function ${\rm o}_\lambda\big(x^{\pm}\big)$
can be treated in the same manner by the vertex operator. Recall the classical Jacobi--Trudi identity for the Schur functions~\cite[formula~(3.4)]{Mac1995}
\begin{align}\label{e:JT1}
{\rm s}_\nu(x)=\det(h_{\lambda_i-i+j}(x))_{1\leq i,j\leq N},
\end{align}
where the complete homogeneous function $h_n(x)$ is defined by $\prod^N_{i=1}\frac{1}{1-x_iz}=\sum_{n\in \mathbb{Z}}h_n(x)z^n$. For the symplectic and the orthogonal cases, Schur function ${\rm sp}_\lambda\big(x^{\pm}\big)$ and orthogonal Schur function ${\rm o}_\lambda\big(x^{\pm}\big)$ admit the Jacobi--Trudi formulas~\cite[Theorems~1.3.2 and~1.3.3]{KT1987} and \cite[formulas~(4.1) and~(4.2)]{JN2015}
\begin{align}
&\label{e:JT2}{\rm sp}_\nu\big(x^{\pm}\big)=\frac{1}{2}\det\big(h_{\nu_i-i+j}\big(x^{\pm}\big)+h_{\nu_{i}-i-j+2}\big(x^{\pm}\big)\big)_{1\leq i,j\leq N},\\
&\label{e:JT3}{\rm o}_\nu\big(x^{\pm}\big)=\det\big(h_{\nu_i-i+j}\big(x^{\pm}\big)-h_{\nu_{i}-i-j}\big(x^{\pm}\big)\big)_{1\leq i,j\leq N},
\end{align}
where $h_n\big(x^{\pm}\big)$ \cite{Ba1996,JN2015,Wey1946} is defined by $\prod^N_{i=1}\frac{1}{(1-x_iz)(1-x^{-1}_iz)}=\sum_{n\in \mathbb{Z}}h_n\big(x^{\pm}\big)z^n$. It is obvious that $h_n(x)=h_n\big(x^{\pm}\big)=0$ for $n<0$.

Introduce the (half) vertex operators
\begin{align}
\Gamma_+(z)=\exp\Biggl(\sum^\infty_{n=1}\frac{a_n}{n}z^n\Biggr), \qquad
\Gamma_-(z)=\exp\Biggl(\sum^\infty_{n=1}\frac{a_{-n}}{n}z^n\Biggr).\label{e:eq10}
\end{align}
For $\{x\}=(x_1,\dots,x_N)$, $\big\{x^{\pm}\big\}=\big(x_1,x^{-1}_1,\dots,x_N,x^{-1}_N\big)$, we denote
\begin{align*}
&\Gamma_+(\{x\})=\prod^{N}_{i=1}\Gamma_+(x_i),\qquad
\Gamma_+\big(\big\{x^{\pm}\big\}\big)=\prod^{N}_{i=1}\Gamma_+(x_i)\Gamma_+\big(x^{-1}_i\big),\qquad
\Gamma_-(\{x\})=\prod^{N}_{i=1}\Gamma_-(x_i).%\label{e:sp11}
\end{align*}

\subsection{Skew Schur functions}
For a partition $\mu=(\mu_1,\dots,\mu_l)$, one can rewrite $X^*_{-\mu_l}\cdots X^*_{-\mu_1}$ by permuting the factors~\cite{Jing2000}
and \eqref{e:com}:
\begin{align}\label{e:sc5}
\varepsilon(\sigma)X^*_{-\mu_l}\cdots X^*_{-\mu_1}=X^*_{-\mu_{\sigma(l)}+\sigma(l)-l}\cdots X^*_{-\mu_{\sigma(i)}+\sigma(i)-i}\cdots X^*_{-\mu_{\sigma(1)}+\sigma(1)-1},
\end{align}
where $\sigma\in S_l$.
%{\color{red}\sout{We shall refer to this as the action of the symmetric group on the monomial elements.} }
The skew Schur function ${\rm s}_{\lambda/\mu}(x)$ generalizes the Schur function and has a~determinantal expression~\cite[formula~(5.4)]{Mac1995} for two partitions $\mu=(\mu_1, \dots,\mu_l)$ and $\lambda=(\lambda_1, \dots,\lambda_k)$:
\begin{align*}
{\rm s}_{\lambda/\mu}(x)=\det(h_{\lambda_i-\mu_j-i+j})_{1\leq i,j\leq k}.
\end{align*}

\begin{Proposition}\label{pro10}Let $\mu=(\mu_1, \dots,\mu_l)$ and $\lambda=(\lambda_1, \dots,\lambda_k)$ be two partitions. Then
\begin{align}\label{e:sc6}
\langle\mu|\Gamma_+(\{x\})|\lambda\rangle={\rm s}_{\lambda/\mu}(x).
\end{align}
\end{Proposition}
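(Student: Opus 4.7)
The plan is to propagate $\Gamma_+(\{x\})$ through $|\lambda\rangle=X_{-\lambda_1}\cdots X_{-\lambda_k}|0\rangle$, harvesting complete homogeneous symmetric functions from the commutators, and then to evaluate the resulting matrix element against $\langle\mu|$ using the fermion-type relations~\eqref{e:com} together with the orthonormality of Theorem~\ref{th2}. After padding $\mu$ with zero parts if $l<k$, I may assume $\mu=(\mu_1,\dots,\mu_k)$ so that $\langle\mu|$ carries the same number of $X^*$-factors as $|\lambda\rangle$ has $X$-factors; this padding is harmless since $\langle 0|X^*_0=\langle 0|$.

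The first step is to derive the commutation $\Gamma_+(z)X(w)=(1-zw)^{-1}X(w)\Gamma_+(z)$ by applying the Baker--Campbell--Hausdorff formula to~\eqref{e:he1}. Multiplying over $i=1,\dots,N$ and expanding $\prod_{i=1}^N(1-x_iw)^{-1}=\sum_{n\geq 0}h_n(x)w^n$ yields, after extracting the coefficient of $w^{-m}$, the relation $\Gamma_+(\{x\})X_m=\sum_{n\geq 0}h_n(x)X_{m+n}\Gamma_+(\{x\})$. Iterating this past each of the $k$ factors in $|\lambda\rangle$ and using $\Gamma_+(\{x\})|0\rangle=|0\rangle$ (the positive Heisenberg modes annihilate the vacuum), I obtain
\[
\langle\mu|\Gamma_+(\{x\})|\lambda\rangle=\sum_{n_1,\dots,n_k\geq 0}\prod_{i=1}^{k}h_{n_i}(x)\,\langle\mu|X_{n_1-\lambda_1}\cdots X_{n_k-\lambda_k}|0\rangle.
\]

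What remains is to compute each matrix element on the right. Using $X_iX_j=-X_{j+1}X_{i-1}$ from~\eqref{e:com} to straighten the $X$-string into canonical form (the direct analogue of~\eqref{e:sc5}) and then applying~\eqref{e:or}, I expect $\langle\mu|X_{n_1-\lambda_1}\cdots X_{n_k-\lambda_k}|0\rangle$ to be nonzero precisely when there is a permutation $\sigma\in S_k$ with $\lambda_i-n_i-(i-1)=\mu_{\sigma(i)}-(\sigma(i)-1)$ for all~$i$, in which case it equals $\varepsilon(\sigma)$. Solving for $n_i=\lambda_i-\mu_{\sigma(i)}-i+\sigma(i)$ collapses the sum to
\[
\sum_{\sigma\in S_k}\varepsilon(\sigma)\prod_{i=1}^{k}h_{\lambda_i-\mu_{\sigma(i)}-i+\sigma(i)}(x)=\det\bigl(h_{\lambda_i-\mu_j-i+j}(x)\bigr)_{1\leq i,j\leq k}={\rm s}_{\lambda/\mu}(x),
\]
the last equality being the classical Jacobi--Trudi formula for skew Schur functions recorded immediately above the proposition. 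The main obstacle is the bookkeeping in the straightening: one must verify that strings whose shifted indices fail to align with a valid partition shape genuinely contribute zero (matching the convention $h_n(x)=0$ for $n<0$) and that the signs are tracked consistently under the substitution $\beta_i=(\lambda_i-n_i)-(i-1)$, in which the $X$-operators behave like standard fermions. The same template, with $X$ replaced by $Y$ or $W$ and the generating series adjusted to produce $h_n(x^{\pm})$, should drive the skew symplectic and orthogonal analogues treated later in the section.
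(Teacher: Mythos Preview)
Your argument for Proposition~\ref{pro10} itself is correct and is essentially the proof the paper gives in Appendix~\ref{appA} (attributed there to Okounkov): push $\Gamma_+(\{x\})$ to the right through the $X$-factors of $|\lambda\rangle$, use $\Gamma_+(\{x\})|0\rangle=|0\rangle$, straighten via \eqref{e:sc15}, and read off the Jacobi--Trudi determinant. The paper's \emph{main} proof is genuinely different: it works on the bra side, first expanding $\langle 0|\Gamma_+(\{x\})=\sum_\nu\langle\nu|\,{\rm s}_\nu(x)$ via $\Gamma_+(z)=\Gamma_-(z^{-1})X^*(z^{-1})$, and then commuting the $X^*_{-\mu_j}$ past $\Gamma_+(\{x\})$ to expand $\langle\mu|\Gamma_+(\{x\})$ as $\sum_\lambda\langle\lambda|\,{\rm s}_{\lambda/\mu}(x)$.

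The important difference is exactly the point you raise in your last sentence, and there your expectation is wrong. Your template does \emph{not} carry over to the symplectic and orthogonal cases; the paper says so explicitly in the Remark following Proposition~\ref{pro10}. The obstruction is the padding step you use at the outset. For type~A you may freely append zero parts to $\mu$ because $\langle 0|X^*_0=\langle 0|$, so $\langle\mu|$ with $l$ factors equals $\langle\mu|$ with $k$ factors, and the orthonormality~\eqref{e:or} then matches the $k$-factor ket. In the symplectic and orthogonal cases this fails: $\langle 0|Y^*_0\neq\langle 0|$ and $\langle 0|W^*_0\neq\langle 0|$, so padding $\mu$ with zeros genuinely changes $\langle\mu^{\rm sp}|$ and $\langle\mu^{\rm o}|$ (this is stressed just after \eqref{e:o6} and in the Remark after Theorem~\ref{th2}). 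Consequently, after you push $\Gamma_+(\{x^\pm\})$ through $|\lambda^{\rm sp}\rangle$ and straighten, you obtain a sum of kets $|\nu^{\rm sp}\rangle$ with $\nu$ a generalized partition of length $l+N$, but $\langle\mu^{\rm sp}|$ has only $l$ factors, and Theorem~\ref{th2} gives orthonormality only between vectors of the \emph{same} length; there is no clean way to collapse the pairing to a single determinant. The paper's bra-side method sidesteps this by first rewriting $\langle 0|\Gamma_+(\{x^\pm\})$ as a sum over $\langle\nu^{\rm sp}|$ (resp.\ $\langle\nu^{\rm o}|$) of length $N$ via \eqref{e:sp12} and \eqref{e:o11}, and only then appending the $l$ factors $Y^*_{-\mu_j}$ (resp.\ $W^*_{-\mu_j}$); the resulting bras have length $l+N$, matching $|\lambda^{\rm sp}\rangle$, so \eqref{e:sp3} and \eqref{e:o3} apply and the two-block Jacobi--Trudi determinants \eqref{e:sp15} and \eqref{e:sk5} fall out. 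That is precisely what the main-text approach buys.
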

\begin{proof}
Using the relations
\begin{align*}
&\Gamma_+(z)=\Gamma_-\big(z^{-1}\big)X^*\big(z^{-1}\big),\\
&X^*\big(z_2^{-1}\big)\Gamma_-\big(z^{-1}_1\big)=\bigg(1-\frac{z_2}{z_1}\bigg)^{-1}\Gamma_-\big(z^{-1}_1\big)X^*\big(z_2^{-1}\big),\\
&\langle0|\Gamma_-(z)=\langle0|,
\end{align*}
we get that
\begin{align*}%\label{e:sc7}
\notag\langle0|\Gamma_+(\{x\})={}&\prod_{1\leq i< j\leq N}\bigg(1-\frac{x_j}{x_i}\bigg)^{-1}\langle0|X^*\big(x_N^{-1}\big)\cdots X^*\big(x_1^{-1}\big)\\
={}&\prod_{1\leq i< j\leq N}\bigg(1-\frac{x_j}{x_i}\bigg)^{-1}\sum_{n_1,\dots,n_N\in\mathbb{Z}}\langle0|X^*_{-n_N}\cdots X^*_{-n_1}x^{n_N}_N\cdots x^{n_1}_1.
\end{align*}
For %a partition
$\nu=(\nu_1,\dots,\nu_N)$, the relation \eqref{e:sc5} says that the coefficient of $\langle\nu|=\langle 0|X^*_{-\nu_N}\cdots X^*_{-\nu_1}$ in $\langle0|\Gamma_+(\{x\})$
 is
\begin{gather*}
\prod_{1\leq i<j\leq N}\bigg(1-\frac{x_j}{x_i}\bigg)^{-1}\sum_{\sigma\in S_N}\varepsilon(\sigma)x^{\nu_{\sigma(1)}-\sigma(1)+1}_1\cdots x^{\nu_{\sigma(i)}-\sigma(i)+i}_i\cdots x^{\nu_{\sigma(N)}-\sigma(N)+N}_N\\
 \quad\qquad= \prod_{1\leq i<j\leq N} (x_i-x_j)^{-1}\sum_{\sigma\in S_N}\varepsilon(\sigma)x^{\nu_{\sigma(1)}-\sigma(1)+N}_1\cdots x^{\nu_{\sigma(i)}-\sigma(i)+N}_i\cdots x^{\nu_{\sigma(N)}-\sigma(N)+N}_N\\
 \quad\qquad= \frac{\det\big(x^{\nu_j+N-j}_i\big)_{1\leq i,j\leq N}}{\prod_{1\leq i<j\leq N} (x_i-x_j)}=\frac{\det\big(x^{\nu_j+N-j}_i\big)_{1\leq i,j\leq N}}{\det\big(x^{N-j}_i\big)_{1\leq i,j\leq N}}={\rm s}_\nu(x),
\end{gather*}
where ${\rm s}_\nu(x)$ is the Schur function \eqref{e:s50} associated with the partition $\nu$. We therefore get the expansion
\begin{align}\label{e:sc8}
\langle0|\Gamma_+(\{x\})=\sum_{\substack{l(\nu)\leq N}}\langle\nu|{\rm s}_\nu(x),
\end{align}
where the sum is over all partitions $\nu$ with length $\leq N$. It then follows from \eqref{e:sc5} and the Jacobi--Trudi formula \eqref{e:JT1} that
\begin{align*}%\label{e:sc9}
\langle0|\Gamma_+(\{x\})=\sum_{n_1,n_2,\dots,n_N\geq 0}\langle 0|X^*_{-n_N}\cdots X^*_{-n_1}h_{n_N}(x)\cdots h_{n_1}(x).
\end{align*}
By \eqref{e:ve1}, we have that
\begin{align*}
X^{*}(z)\Gamma_+(\{x\})=\prod^N_{i=1}\frac{1}{(1-x_iz)}\Gamma_+(\{x\})X^{*}(z)
=\sum_{i\geq 0}h_i(x)z^i\Gamma_+(\{x\})X^{*}(z),
\end{align*}
 i.e.,
\begin{align*}
X^{*}_{-n}\Gamma_+(\{x\}) = \Gamma_+(\{x\})\sum_{i\geq 0}h_i(x)X^{*}_{-n-i}.
\end{align*}
It follows from \eqref{e:sc2} that
\begin{align}
\langle\mu|\Gamma_+(\{x\})={}& \langle0|X^{*}_{-\mu_l}\cdots X^{*}_{-\mu_1}\Gamma_+(\{x\}) \nonumber\\
={}& \langle0|\Gamma_+(\{x\})\sum_{i_1,\dots,i_l\geq 0}X^{*}_{-\mu_l-i_l}\cdots X^{*}_{-\mu_1-i_1}h_{i_l}(x)\cdots h_{i_1}(x)\nonumber\\
={}& \sum_{i_{l+1},\dots,i_{l+N}\in\mathbb{Z}}\langle 0|X^*_{-i_{l+N}}\cdots X^*_{-i_{l+1}}h_{i_{l+N}}(x)\cdots h_{i_{l+1}}(x)\nonumber\\
&{}\times \sum_{i_1,\dots,i_l\geq 0}X^{*}_{-\mu_l-i_l}\cdots X^{*}_{-\mu_1-i_1}h_{i_l}(x)\cdots h_{i_1}(x)\nonumber\\
={}& \sum_{i_1,\dots,i_{l+N}\in\mathbb{Z}}\langle 0|X^*_{-i_{l+N}}\cdots X^*_{-i_{l+1}}X^{*}_{-\mu_l-i_l}\cdots X^{*}_{-\mu_1-i_1}\nonumber\\
 &{}\times h_{i_{l+N}}(x)\cdots h_{i_{l+1}}(x)h_{i_l}(x)\cdots h_{i_1}(x).\label{e:sc10}
\end{align}
For partition $\lambda$ with $l\leq l(\lambda)\leq l+N$, the relations \eqref{e:sc5} and \eqref{e:sc10} imply that the coefficient of~$\langle\lambda|$ in $\langle\mu|\Gamma_+(\{x\})$ is
\begin{align*}
\sum_{\sigma\in S_{l+N}}\epsilon(\sigma)\prod^{l+N}_{j=1}h_{\lambda_{\sigma(j)}-\mu_j-\sigma(j)+j}(x) = \det\big(h_{\lambda_i-\mu_j-i+j}(x)\big)_{1\leq i,j\leq l+N} = {\rm s}_{\lambda/\mu}(x),
\end{align*}
i.e.,
\begin{align*}
\langle\mu|\Gamma_+(\{x\}) = \sum_{\substack{l\leq l(\lambda)\leq l+N}}\langle\lambda|{\rm s}_{\lambda/\mu}(x).
\end{align*}
Subsequently (by the orthogonality \eqref{e:or})
\begin{align*}
\langle\mu|\Gamma_+(\{x\})|\lambda\rangle = {\rm s}_{\lambda/\mu}(x).
\end{align*}
This completes the proof.
\end{proof}

\begin{Remark}
Okounkov~\cite{Oko2001} derived %another method to get
\eqref{e:sc6} by %indicating how to
computing $\Gamma_+(\{x\})|\lambda\rangle$. We can show \eqref{e:sc6} using the vertex operator approach (see Appendix~\ref{appA}). Though
we do not know how to generalize either method to skew-type orthogonal/symplectic cases, the method of Proposition~\ref{pro10} can be generalized to both cases.
\end{Remark}

\subsection{Skew symplectic Schur functions} It follows from \eqref{e:re2} and \eqref{e:com} that
\begin{align*}
\begin{aligned}
\langle 0|Y^*_{-n_1}\cdots Y^*_{-n_i}\cdots Y^*_{-n_N}&{}=(-1)^{i-1}\langle 0|Y^*_{-n_i-i+1}Y^*_{-n_1+1}\cdots Y^*_{-n_{i-1}+1}Y^*_{-n_{i+1}}\cdots Y^*_{-n_N}\\
&{}= (-1)^i\langle 0|Y^*_{n_i+i+1}Y^*_{-n_1+1}\cdots Y^*_{-n_{i-1}+1}Y^*_{-n_{i+1}}\cdots Y^*_{-n_N}\\
&{}= -\langle 0|Y^*_{-n_1}\cdots Y^*_{-n_{i-1}}Y^*_{n_i+2i}Y^*_{-n_{i+1}}\cdots Y^*_{-n_N}.
\end{aligned}
\end{align*}
In other words, if we replace the $i$th factor $Y^*_{-n_i}$ of $\langle 0|Y^*_{-n_1}\cdots Y^*_{-n_i}\cdots Y^*_{-n_N}$ by $-Y^*_{n_i+2i}$, it remains the same. We use the notation $(^a_b)$ to mean either of $a$ or $b$. Then for a generalized partition $\mu=(\mu_1,\dots, \mu_l)$,
\begin{align*}
\langle 0|\begin{pmatrix}{Y^*_{-\mu_l}}\\{-Y^*_{\mu_l+2}}\end{pmatrix}\cdots \begin{pmatrix}{Y^*_{-\mu_i}}\\{-Y^*_{\mu_i+2(l+1-i)}}\end{pmatrix}\cdots \begin{pmatrix}{Y^*_{-\mu_1}}\\{-Y^*_{\mu_1+2l}}\end{pmatrix}=\langle\mu^{\rm sp}|.
\end{align*}
Note that there is a similar action of $S_N$ on the vectors $\langle 0|Y^*_{-n_1}\cdots Y^*_{-n_i}\cdots Y^*_{-n_N}$
as in \eqref{e:sc5} using the commutation relation $Y^*_iY^*_j=-Y^*_{j-1}Y^*_{i+1}$ \eqref{e:com}. Therefore, we have that
\begin{align}\label{e:sp9}
\varepsilon(\sigma)\langle 0|\!\begin{pmatrix}{Y^*_{-\mu_{\sigma(l)}+\sigma(l)-l}}\\{-Y^*_{\mu_{\sigma(l)}-\sigma(l)+l+2}}\end{pmatrix}\!\cdots \! \begin{pmatrix}{Y^*_{-\mu_{\sigma(i)}+\sigma(i)-i}}\\{-Y^*_{\mu_{\sigma(i)}-\sigma(i)+2(l+1)-i}}\end{pmatrix}\!\cdots \! \begin{pmatrix}{Y^*_{-\mu_{\sigma(1)}+\sigma(1)-1}}\\{-Y^*_{\mu_{\sigma(1)}-\sigma(1)+2l+1}}\end{pmatrix}\!=\langle\mu^{\rm sp}|.\!
\end{align}

We now define a family of symmetric functions by vertex operators.
\begin{Proposition}\label{pro3}
Let $\mu=(\mu_1,\dots,\mu_l)$ and $\lambda=(\lambda_1,\dots,\lambda_{l+N})$ be two generalized partitions, and $\big\{x^{\pm}\big\}=\big\{x_1, x_1^{-1} \ldots, x_N, x_N^{-1}\big\}$. Then % and vertex operator $\Gamma_+\big(\big\{x^{\pm}\big\}\big)$,
one has that
\begin{align}\label{e:sp15}
\langle\mu^{\rm sp}|\Gamma_+\big(\big\{x^{\pm}\big\}\big)|\lambda^{\rm sp}\rangle=\det(a_{ij})_{1\leq i,j\leq l+N},%{\rm sp}_{\lambda/\mu}\big(x^{\pm}\big)
\end{align}
where
\begin{equation}\label{e:eq6}
a_{ij}=\begin{cases}
h_{\lambda_i-\mu_j-i+j}\big(x^{\pm}\big), & 1\leq j\leq l+1,\\
h_{\lambda_i-i+j}\big(x^{\pm}\big)+h_{\lambda_i-i-j+2l+2}\big(x^{\pm}\big), & l+2\leq j\leq l+N.
\end{cases}
\end{equation}
\end{Proposition}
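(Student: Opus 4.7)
My plan is to mirror the proof of Proposition~\ref{pro10}, replacing $(X,X^*)$ with $(Y,Y^*)$ throughout and incorporating the reflection identity~\eqref{e:re2} (specific to the symplectic setting) to produce the extra symmetric summands in~\eqref{e:eq6}.

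The first step is to derive the commutation
\[Y^*_{-n}\Gamma_+\bigl(\bigl\{x^{\pm}\bigr\}\bigr)=\Gamma_+\bigl(\bigl\{x^{\pm}\bigr\}\bigr)\sum_{i\geq 0}h_i\bigl(x^{\pm}\bigr)Y^*_{-n-i},\]
which is the symplectic analog of the commutation used in the proof of Proposition~\ref{pro10}. It follows from the factorization $Y^*(z)=(1-z^2)\Gamma_-(z)^{-1}\Gamma_+(z^{-1})\Gamma_+(z)$ implicit in~\eqref{e:ve1}, the elementary rule $\Gamma_-(z)^{-1}\Gamma_+(y)=(1-yz)^{-1}\Gamma_+(y)\Gamma_-(z)^{-1}$, and the generating series $\prod_{i=1}^{N}\frac{1}{(1-x_iz)(1-x_i^{-1}z)}=\sum_{n\geq 0}h_n(x^{\pm})z^n$. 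Iterating this commutation through the $l$ factors of $\langle\mu^{\rm sp}|=\langle 0|Y^*_{-\mu_l}\cdots Y^*_{-\mu_1}$ produces
\[\langle\mu^{\rm sp}|\Gamma_+\bigl(\bigl\{x^{\pm}\bigr\}\bigr)=\langle 0|\Gamma_+\bigl(\bigl\{x^{\pm}\bigr\}\bigr)\sum_{i_1,\ldots,i_l\geq 0}h_{i_l}\bigl(x^{\pm}\bigr)\cdots h_{i_1}\bigl(x^{\pm}\bigr)\,Y^*_{-\mu_l-i_l}\cdots Y^*_{-\mu_1-i_1}.\]

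To handle the residual $\langle 0|\Gamma_+(\{x^{\pm}\})$, I would substitute the Cauchy-type expansion $\langle 0|\Gamma_+(\{x^{\pm}\})=\sum_{\nu}\langle\nu^{\rm sp}|{\rm sp}_\nu(x^{\pm})$ (summed over generalized partitions $\nu$ of length $\leq N$), and then apply the symplectic Jacobi--Trudi identity~\eqref{e:JT2} to each ${\rm sp}_\nu(x^{\pm})$. Invoking~\eqref{e:sp9} with $l$ replaced by $N$ and $\mu$ replaced by $\nu$, the $N\times N$ determinant expansion of ${\rm sp}_\nu(x^{\pm})$ can be recast so that each of its $N$ columns carries two admissible branches for the corresponding $Y^*$-factor: the direct shift paired with $h_{\nu_k-k+\sigma(k)}(x^{\pm})$ and the reflected shift paired with $h_{\nu_k-k-\sigma(k)+2}(x^{\pm})$. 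Concatenating with the $l$ $\mu$-side factors from the previous step gives a master expansion of $\langle\mu^{\rm sp}|\Gamma_+(\{x^{\pm}\})$ as a signed sum over $Y^*$-words of length $l+N$ with explicit $h$-coefficients.

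Pairing the master expansion with $|\lambda^{\rm sp}\rangle$ and invoking orthonormality~\eqref{e:sp3} extracts the coefficient of $\langle\lambda^{\rm sp}|$, which by the Leibniz formula is a signed sum over $\sigma\in S_{l+N}$ and over the branch choices in~\eqref{e:sp9}. Sorting by column $j$: for $1\leq j\leq l$ only the direct branch contributes, giving the pure entry $h_{\lambda_i-\mu_j-i+j}(x^{\pm})$; for $j=l+1$ the two branches produce the identical shift $\lambda_i-i+l+1$, so their sum is twice the direct entry, and this doubling is precisely absorbed by the factor $\tfrac12$ in~\eqref{e:JT2}, leaving the pure entry $h_{\lambda_i-\mu_{l+1}-i+(l+1)}(x^{\pm})$ (with the padded $\mu_{l+1}=0$); for $l+2\leq j\leq l+N$ the two branches produce genuinely distinct shifts and contribute the symmetric pair $h_{\lambda_i-i+j}(x^{\pm})+h_{\lambda_i-i-j+2l+2}(x^{\pm})$. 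Recognising the resulting signed sum as $\det(a_{ij})_{1\leq i,j\leq l+N}$ with $a_{ij}$ as in~\eqref{e:eq6} concludes the argument. The main obstacle is exactly this final bookkeeping: pairing the two branches of~\eqref{e:sp9} with the two Jacobi--Trudi summands in each column of~\eqref{e:JT2}, tracking signs coming from~\eqref{e:com} and~\eqref{e:re2}, and verifying the delicate collapse at $j=l+1$ where the doubled entry cancels the factor $\tfrac12$ in~\eqref{e:JT2}.
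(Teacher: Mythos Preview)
Your proposal is correct and follows essentially the same route as the paper's proof: commute each $Y^*_{-\mu_j}$ past $\Gamma_+(\{x^{\pm}\})$ via the identity $Y^*_{-n}\Gamma_+(\{x^{\pm}\})=\Gamma_+(\{x^{\pm}\})\sum_{i\geq 0}h_i(x^{\pm})Y^*_{-n-i}$, expand the residual $\langle 0|\Gamma_+(\{x^{\pm}\})$ using \eqref{e:sp12} together with the Jacobi--Trudi formula \eqref{e:JT2} and the reflection identity \eqref{e:sp9} to obtain an $h$-weighted sum of $Y^*$-words (the paper records this as \eqref{e:sp13}), and then extract the coefficient of $\langle\lambda^{\rm sp}|$ via \eqref{e:sp9} to recognise the $(l+N)\times(l+N)$ determinant. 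Your explicit remark about the collapse at $j=l+1$, where the two branches coincide and the resulting factor of $2$ cancels the $\tfrac12$ in \eqref{e:JT2}, is exactly the mechanism that produces the single $h_{n_1}(x^{\pm})$ factor in the paper's \eqref{e:sp13}; the paper leaves this implicit.
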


We call the symmetric function \eqref{e:sp15} as the {\it skew symplectic Schur function} $ {\rm sp}_{\lambda/\mu}\big(x^{\pm}\big)$ , i.e.,
\begin{align}\label{e:sp21}
{\rm sp}_{\lambda/\mu}\big(x^{\pm}\big)=\det(a_{ij})_{1\leq i,j\leq l+N}=\langle\mu^{\rm sp}|\Gamma_+\big(\big\{x^{\pm}\big\}\big)|\lambda^{\rm sp}\rangle.
\end{align}
We will justify this by showing these symmetric functions obey the branching rule, therefore they agree with the skew symplectic Schur functions defined by Koike and Terada~\cite{KT1990}.

\begin{proof}
By definition of the vertex operator $Y^*(z)$, one has that
\begin{align*}%\label{e:eq5}
\notag\langle 0|\Gamma_+\big(\big\{x^{\pm}\big\}\big)=&\prod_{1\leq i\leq j\leq N}\frac{1}{1-x_ix_j}\prod_{1\leq i<j\leq N}\frac{1}{1-\frac{x_i}{x_j}}\langle 0|Y^*(x_N)\cdots Y^*(x_1)\\
=&\frac{\prod^{N}_{i=1}x^{i-1-N}_i}{\det\big(x^{-(N-j+1)}_i-x^{N-j+1}_i\big)^N_{i,j=1}}\langle 0|Y^*(x_N)\cdots Y^*(x_1),
\end{align*}
where we have used the Vandermonde type identity~\cite[formula~(4.6)]{JN2015} and~\cite[p.~229]{Wey1946}:
\begin{align*}%\label{e:va1}
\det\big(x^{j-1}_i-x^{2N-j+1}_i\big)^N_{i,j=1}=\prod_{1\leq i<j\leq N}(x_j-x_i)\prod_{1\leq i\leq j\leq N}(1-x_ix_j).
\end{align*}
In view of \eqref{e:sp9} for partition $\nu=(\nu_1,\dots,\nu_N)$, the coefficient of $\langle\nu^{\rm sp}|$ in $\langle 0|Y^*(x_N)\cdots Y^*(x_1)$ is
\begin{align*}
\sum_{\sigma\in S_N}\varepsilon(\sigma)\prod^{N}_{i=1}\big(x^{-\nu_{\sigma(i)}+\sigma(i)-i}_i-x^{\nu_{\sigma(i)}-\sigma(i)+2N+2-i}_i\big) = \det\big(x^{-\nu_j+j-i}_i-x^{\nu_j-j+2N+2-i}_i\big)^N_{i,j=1}.
\end{align*}
Then we have
\begin{align}\label{e:sp12}
\notag\langle 0|\Gamma_+\big(\big\{x^{\pm}\big\}\big)
=&\sum_{\nu=(\nu_1,\dots,\nu_N)}\langle\nu^{\rm sp}|\frac{\prod^{N}_{i=1}x^{i-1-N}_i\det\big(x^{-\nu_j+j-i}_i-x^{\nu_j-j+2N+2-i}_i\big)^N_{i,j=1}}{\det\big(x^{-(N-j+1)}_i-x^{N-j+1}_i\big)^N_{i,j=1}}\\
\notag=&\sum_{\nu=(\nu_1,\dots,\nu_N)}\langle\nu^{\rm sp}|\frac{\det\big(x^{-\nu_j-(N-j+1)}_i-x^{\nu_j+(N-j+1)}_i\big)^N_{i,j=1}}{\det\big(x^{-(N-j+1)}_i-x^{N-j+1}_i\big)^N_{i,j=1}}\\
\notag=&\sum_{\nu=(\nu_1,\dots,\nu_N)}\langle\nu^{\rm sp}|\frac{\det\big(x^{\nu_j+(N-j+1)}_i-x^{-\nu_j-(N-j+1\big)}_i)^N_{i,j=1}}{\det\big(x^{N-j+1}_i-x^{-(N-j+1)}_i\big)^N_{i,j=1}}\\
=&\sum_{\nu=(\nu_1,\dots,\nu_N)}\langle\nu^{\rm sp}|{\rm sp}_\nu\big(x^{\pm}\big),
\end{align}
%\substack{\nu\\l(\nu)\leq N}
where the sum is over all generalized partitions $\nu=(\nu_1,\dots,\nu_N)$ and we have used the bialternant formula of the symplectic Schur function ${\rm sp}_\nu\big(x^{\pm}\big)$ (see \eqref{e:sp50}) associated to the partition $\nu$. Invoking \eqref{e:sp9} again and also by the Jacobi--Trudi formula for the symplectic Schur functions~\eqref{e:JT2},
we obtain another expression for $\langle 0|\Gamma_+\big(\big\{x^{\pm}\big\}\big)$:
\begin{align}\label{e:sp13}
&\langle 0|\Gamma_+\big(\big\{x^{\pm}\big\}\big)\\
&\qquad=\sum_{n_1\geq -N+1,n_2\geq -N+2,\dots,n_N\geq 0}
\langle 0|Y^*_{-n_N}\cdots Y^*_{-n_1}h_{n_1}\big(x^{\pm}\big)\prod^N_{i=2}\big(h_{n_i}\big(x^{\pm}\big)+h_{n_i-2i+2}\big(x^{\pm}\big)\big).\notag
\end{align}
By definition of vertex operator $Y^*(z)$, we have
\begin{align*}%\label{e:com4}
 {}Y^*(z)\Gamma_+\big(\big\{x^{\pm}\big\}\big)& =\prod^N_{i=1}\frac{1}{(1-x_iz)\big(1-x^{-1}_iz\big)}\Gamma_+\big(\big\{x^{\pm}\big\}\big)Y^*(z)\\
& =\sum_{i\geq 0}h_i\big(x^{\pm}\big)z^i\Gamma_+\big(\big\{x^{\pm}\big\}\big)Y^*(z),
\end{align*}
which implies
\begin{align*}
Y^*_{-n}\Gamma_+\big(\big\{x^{\pm}\big\}\big) = \Gamma_+\big(\big\{x^{\pm}\big\}\big)\sum_{i\geq 0}h_i\big(x^{\pm}\big)Y^*_{-n-i}.
\end{align*}
It then follows from \eqref{e:sp6}, \eqref{e:sp13} and \eqref{e:sp9} that
\begin{gather}
\notag \langle\mu^{\rm sp}|\Gamma_+\big(\big\{x^{\pm}\big\}\big)=\sum_{i_1,\dots,i_l\geq 0}h_{i_1}\big(x^{\pm}\big)\cdots h_{i_l}\big(x^{\pm}\big)\langle 0|\Gamma_+\big(\big\{x^{\pm}\big\}\big)Y^*_{-\mu_l-i_l}\cdots Y^*_{-\mu_1-i_1}
\\ \notag \hphantom{\langle\mu^{\rm sp}|\Gamma_+\big(\big\{x^{\pm}\big\}\big)}{}
=\sum_{\substack{i_1,\dots,i_l\geq 0\\i_{l+1}\geq -N+1,\dots,i_{l+N}\geq 0}}\langle 0|Y^*_{-i_{l+N}}\cdots Y^*_{-i_{l+1}}Y^*_{-\mu_l-i_l}\cdots Y^*_{-\mu_1-i_1}\prod^{l+1}_{k=1}h_{i_k}\big(x^{\pm}\big)
\\ \notag \hphantom{\langle\mu^{\rm sp}|\Gamma_+\big(\big\{x^{\pm}\big\}\big)=}{}
 \times\prod^{l+N}_{k=l+2}\big(h_{i_k}\big(x^{\pm}\big)+h_{i_k-2(k-l)+2}\big(x^{\pm}\big)\big)
\\ \notag \hphantom{\langle\mu^{\rm sp}|\Gamma_+\big(\big\{x^{\pm}\big\}\big)}{}
=\sum_{\lambda=(\lambda_1,\dots,\lambda_{l+N})}\langle\lambda^{\rm sp}|\sum_{\sigma\in S_{l+N}}
\prod^{l+1}_{k=1}h_{\lambda_{\sigma(k)}-\mu_k-\sigma(k)+k}\big(x^{\pm}\big)
\\ \notag \hphantom{\langle\mu^{\rm sp}|\Gamma_+\big(\big\{x^{\pm}\big\}\big)=}{}
 \times\prod^{l+N}_{k=l+2}\big(h_{\lambda_{\sigma(k)}-\sigma(k)+k}\big(x^{\pm}\big)+h_{\lambda_{\sigma(k)}-\sigma(k)-k+2l+2}\big(x^{\pm}\big)\big)
\\ \hphantom{\langle\mu^{\rm sp}|\Gamma_+\big(\big\{x^{\pm}\big\}\big)}{}
=\sum_{\lambda=(\lambda_1,\dots,\lambda_{l+N})}\langle\lambda^{\rm sp}|\det(a_{ij})_{1\leq i,j\leq l+N},\label{e:sp14}
\end{gather}
where $a_{ij}$ are defined in \eqref{e:eq6} and in the last two equations $\lambda_{\sigma(i)}-\sigma(i)+2(l+N+1)-i$ is greater than $l+N-i$.
\end{proof}

Using the orthogonality relation \eqref{e:sp3}, for generalized partition $\nu=(\nu_1,\dots,\nu_n)$, we have
\begin{align}
\label{e:sp24}\langle\nu^{\rm sp}|\sum_{\eta=(\eta_1,\dots,\eta_n)}|\eta^{\rm sp}\rangle\langle\eta^{\rm sp}| = \langle\nu^{\rm sp}|,\qquad
\sum_{\eta=(\eta_1,\dots,\eta_n)}|\eta^{\rm sp}\rangle\langle\eta^{\rm sp}||\nu^{\rm sp}\rangle = |\nu^{\rm sp}\rangle,
\end{align}
where the sum is over all generalized partitions $\eta=(\eta_1,\dots,\eta_n)$.
\begin{Proposition}\label{e:pro10}For $x^{\pm}=\big(x_1,x^{-1}_1,\dots,x_{n-k},x^{-1}_{n-k}\big)$ and $y^{\pm}=\big(y_1,y^{-1}_1,\dots,y_{k},y^{-1}_{k}\big)$, the functions ${\rm sp}_{\lambda/\mu}$ satisfy the general branching rule
\begin{align}\label{e:sp30}
{\rm sp}_\lambda\big(x^{\pm};y^{\pm}\big)=\sum_{\mu=(\mu_1,\dots,\mu_{n-k})}{\rm sp}_\mu\big(x^{\pm}\big){\rm sp}_{\lambda/\mu}\big(y^{\pm}\big).
\end{align}
\end{Proposition}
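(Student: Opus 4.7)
The plan is to exploit the factorization of the half vertex operator $\Gamma_+$ and then insert the resolution of the identity (equation \eqref{e:sp24}) to split the combined variable set into its two halves. Concretely, because $\Gamma_+(z)$ is built only from the positive-mode Heisenberg generators (which mutually commute), one has the clean multiplicativity
\begin{align*}
\Gamma_+\big(\big\{x^{\pm};y^{\pm}\big\}\big)=\Gamma_+\big(\big\{x^{\pm}\big\}\big)\,\Gamma_+\big(\big\{y^{\pm}\big\}\big).
\end{align*}
This is the algebraic feature that makes the vertex-operator machinery perfectly suited for branching-type statements.

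First I would write ${\rm sp}_\lambda\big(x^{\pm};y^{\pm}\big)$ as a single matrix element. Applying the identity \eqref{e:sp12} with the full variable set $\big(x_1,x_1^{-1},\dots,x_{n-k},x_{n-k}^{-1},y_1,y_1^{-1},\dots,y_k,y_k^{-1}\big)$ and pairing with $|\lambda^{\rm sp}\rangle$, the orthogonality \eqref{e:sp3} singles out the coefficient of $\langle\lambda^{\rm sp}|$:
\begin{align*}
{\rm sp}_\lambda\big(x^{\pm};y^{\pm}\big)=\langle 0|\Gamma_+\big(\big\{x^{\pm};y^{\pm}\big\}\big)|\lambda^{\rm sp}\rangle.
\end{align*}
Next, I would use the factorization above and slip the completeness relation \eqref{e:sp24} (with the length fixed to $n-k$) between the two factors, obtaining
\begin{align*}
\langle 0|\Gamma_+\big(\big\{x^{\pm}\big\}\big)\Gamma_+\big(\big\{y^{\pm}\big\}\big)|\lambda^{\rm sp}\rangle=\sum_{\mu=(\mu_1,\dots,\mu_{n-k})}\langle 0|\Gamma_+\big(\big\{x^{\pm}\big\}\big)|\mu^{\rm sp}\rangle\,\langle\mu^{\rm sp}|\Gamma_+\big(\big\{y^{\pm}\big\}\big)|\lambda^{\rm sp}\rangle.
\end{align*}
The first factor is ${\rm sp}_\mu\big(x^{\pm}\big)$ by \eqref{e:sp12} applied to the $x$-variables combined with orthogonality, and the second factor is ${\rm sp}_{\lambda/\mu}\big(y^{\pm}\big)$ by the very definition \eqref{e:sp21}. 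Assembling yields \eqref{e:sp30}.

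The main obstacle will be justifying the insertion of the resolution of identity with the correct range of summation. The statement \eqref{e:sp24} is stated for generalized partitions of a fixed length, so I must argue that only $\mu$ of length exactly $n-k$ contribute. This is precisely what \eqref{e:sp12} encodes: $\langle 0|\Gamma_+\big(\big\{x^{\pm}\big\}\big)$ expands only over $\langle\nu^{\rm sp}|$ with $\nu=(\nu_1,\dots,\nu_{n-k})$, so matrix elements $\langle 0|\Gamma_+\big(\big\{x^{\pm}\big\}\big)|\mu^{\rm sp}\rangle$ vanish outside that length by \eqref{e:sp3}. This also ensures that the vector $\Gamma_+\big(\big\{y^{\pm}\big\}\big)|\lambda^{\rm sp}\rangle$ is effectively spanned by the $|\mu^{\rm sp}\rangle$ of length $n-k$ as far as the pairing against $\langle 0|\Gamma_+\big(\big\{x^{\pm}\big\}\big)$ is concerned, making the insertion legitimate and pinning down the correct summation range in \eqref{e:sp30}.
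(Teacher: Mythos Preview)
Your proposal is correct and follows essentially the same approach as the paper's own proof: both express ${\rm sp}_\lambda(x^{\pm};y^{\pm})$ as the matrix element $\langle 0|\Gamma_+(\{x^{\pm}\})\Gamma_+(\{y^{\pm}\})|\lambda^{\rm sp}\rangle$, insert the resolution of the identity \eqref{e:sp24} between the two $\Gamma_+$ factors, and identify the resulting pieces via \eqref{e:sp12}/\eqref{e:sp15} and \eqref{e:sp21}. Your additional paragraph carefully justifying why the sum over $\mu$ runs over generalized partitions of length exactly $n-k$ is a welcome clarification that the paper leaves implicit.
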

\begin{proof}
It follows from \eqref{e:sp15} that
\begin{align}\label{e:sp25}
\langle0|\Gamma_{+}\big(x^{\pm};y^{\pm}\big)|\lambda^{\rm sp}\rangle={\rm sp}_\lambda\big(x^{\pm};y^{\pm}\big).
\end{align}
By \eqref{e:sp24},
\begin{align}
\langle0|\Gamma_{+}\big(x^{\pm};y^{\pm}\big)|\lambda^{\rm sp}\rangle&=\langle0|\Gamma_{+}\big(x^{\pm}\big)\sum_{\mu=(\mu_1,\dots,\mu_{n-k})}|\mu^{\rm sp}\rangle\langle\mu^{\rm sp}|\Gamma_{+}\big(y^{\pm}\big)|\lambda^{\rm sp}\rangle\nonumber\\
&=\sum_{\mu=(\mu_1,\dots,\mu_{n-k})}{\rm sp}_\mu\big(x^{\pm}\big){\rm sp}_{\lambda/\mu}\big(y^{\pm}\big).\label{e:sp2}
\end{align}
Combining \eqref{e:sp25} and \eqref{e:sp2}, we obtain the result.
\end{proof}

\begin{Remark} Since ${\rm sp}_{\lambda/\mu}$ satisfies the general branching rule \eqref{e:sp30} and reduces to the symplectic Schur function when $\mu=\varnothing$, it
is rightfully called
the skew symplectic Schur function. For $k=1$, the general branching rule \eqref{e:sp30} reduces to the branching rule for Koike--Terada's symplectic Schur functions~\cite[Theorem~3.1]{KT1990}. In fact, ${\rm sp}_{\lambda/\mu}$ agrees with that defined by Koike and Terada by using the forthcoming Gelfand--Tsetlin patterns \eqref{e:sp19}.
\end{Remark}
\begin{Remark}
We stress that the partitions $\mu$ of the skew symplectic Schur function ${\rm sp}_{\lambda/\mu}$
in \eqref{e:sp15} and~\eqref{e:sp21} are general partitions, i.e., some parts can be zeros. For instance, the symmetric function $\langle0|Y^*_{-1}\Gamma_+\big(\big\{x^{\pm}\big\}\big)Y_{-2}Y_{-1}Y_{-1}|0\rangle$ is the skew symplectic Schur function ${\rm sp}_{(2,1,1)/(1)}\big(x^{\pm}\big)$, and $\langle0|Y^*_{0}Y^*_{-1}\Gamma_+\big(\big\{x^{\pm}\big\}\big)Y_{-2}Y_{-1}Y_{-1}|0\rangle$ gives different expression ${\rm sp}_{(2,1,1,0)/(1,0)}\big(x^{\pm}\big)$.
The same statement applies to the skew orthogonal case \eqref{e:sk5}.
\end{Remark}
\begin{Remark}\label{e:remark1}
For two generalized partitions $\mu=(\mu_1,\dots,\mu_l)$ and $\lambda=\big(\lambda_1,\dots,\lambda_{l+1},0^{N-1}\big)$, the
formula \eqref{e:sp21} implies that
${\rm sp}_{\lambda/\mu}\big(x^{\pm}\big)={\rm s}_{\lambda/\mu}\big(x^{\pm}\big)$ since the matrix $(a_{ij})$ is a block upper-unipotent one.
In particular, if $l=N$ or $l=N-1$, the skew symplectic Schur function ${\rm sp}_{(\lambda_1,\dots,\lambda_N,0^l)/(0^l)}\big(x^{\pm}\big)$ is equal to the Schur function ${\rm s}_{(\lambda_1,\dots,\lambda_N)}\big(x^{\pm}\big)$ for
\[
\big\{x^{\pm}\big\}=\big(x_1,x^{-1}_1,\dots,x_N,x^{-1}_N\big).
\]
One can also get the special cases of skew symplectic Schur functions via Gelfand--Tsetlin patterns in the following.
\end{Remark}

\subsection{Skew orthogonal Schur functions} We can treat the orthogonal Schur functions similarly. Using the commutation relations
\begin{align*}
\langle 0|W^*_n=\langle 0|W^*_{-n},\qquad W^*_iW^*_j=-W^*_{j-1}W^*_{i+1},
\end{align*}
we see that the partition elements \eqref{e:o6} with partitions $\mu=(\mu_1,\dots,\mu_l)$ can be expressed as
\begin{gather}\label{e:o9}
\varepsilon(\sigma)\langle 0|\begin{pmatrix}{W^*_{-\mu_{\sigma(l)}+\sigma(l)-l}}\\{\delta_lW^*_{\mu_{\sigma(l)}-\sigma(l)+l}}\end{pmatrix}\cdots \begin{pmatrix}{W^*_{-\mu_{\sigma(i)}+\sigma(i)-i}}\\{\delta_iW^*_{\mu_{\sigma(i)}-\sigma(i)+2l-i}}\end{pmatrix}\cdots \begin{pmatrix}{W^*_{-\mu_{\sigma(1)}+\sigma(1)-1}}\\{\delta_1W^*_{\mu_{\sigma(1)}-\sigma(1)+2l-1}}\end{pmatrix}=\langle\mu^{\rm o}|,
\end{gather}
where $\delta_i$ denotes $\delta_{\mu_l\neq 0}\delta_{\sigma(i)\neq l}$.

Recall the Vandermonde type identity~\cite[formula~(4.4)]{JN2015} and \cite[p.~221]{Wey1946} in this case
\begin{align*}
\det\big(x^{N-j}_i+x^{N+j-2}_i\big)^N_{i,j=1}=2\prod_{1\leq i<j\leq N} (x_i-x_j)(1-x_ix_j).
\end{align*}
Therefore,
\begin{align}
\notag&\langle 0|\Gamma_+\big(\big\{x^{\pm}\big\}\big)\\
\notag&\qquad=\sum_{\nu=(\nu_1,\dots,\nu_N)}\langle\nu^{\rm o}|2(-1)^{\frac{N(N-1)}{2}}\frac{\prod^N_{i=1}x^{i-N}_i\det\big(x^{-\nu_j+j-i}_i+\delta_{j\neq N}\delta_{\nu_N\neq 0}x^{\nu_j-j+2N-i}_i\big)^N_{i,j=1}}{\det\big(x^{-j+1}_i+x^{j-1}_i\big)^N_{i,j=1}}\\
\notag&\qquad=\sum_{\nu=(\nu_1,\dots,\nu_N)}\langle\nu^{\rm o}|2(-1)^{\frac{N(N-1)}{2}}\frac{\det\big(x^{-\nu_j+j-N}_i+\delta_{j\neq N}\delta_{\nu_N\neq 0}x^{\nu_j-j+N}_i\big)^N_{i,j=1}}{\det\big(x^{-j+1}_i+x^{j-1}_i\big)^N_{i,j=1}}\\
\label{e:o11}&\qquad=\sum_{\nu=(\nu_1,\dots,\nu_N)}\langle\nu^{\rm o}|{\rm o}_\nu\big(x^{\pm}\big),
\end{align}
where the sum is over all partitions $\nu=(\nu_1,\dots,\nu_N)$ and ${\rm o}_\nu\big(x^{\pm}\big)$ is the orthogonal Schur function associated to partition $\nu$ (see \eqref{e:o50}). It follows from \eqref{e:o9} and the Jacobi--Trudi formula \eqref{e:JT3} that
\begin{align}
\notag&\langle 0|\Gamma_+\big(\big\{x^{\pm}\big\}\big)\\
&\quad\qquad{}=\sum_{n_1\geq -N+1,n_2\geq -N+2,\dots,n_N\geq 0}
\langle 0|W^*_{-n_N}\cdots W^*_{-n_1}\prod^N_{i=1}\big(h_{n_i}\big(x^{\pm}\big)-h_{n_i-2i}\big(x^{\pm}\big)\big).\label{e:o10}
\end{align}

The following proposition can be proved by the same method of Proposition~\ref{pro3}, \eqref{e:o9}, and~\eqref{e:o10}.
\begin{Proposition}%\label{p:SchurO}
For generalized partitions $\mu=(\mu_1,\dots,\mu_l)$ and $\lambda=(\lambda_1,\dots,\lambda_{l+N})$, and $\big\{x^{\pm}\big\}=\big\{x_1, x_1^{-1}, \ldots, x_N, x_N^{-1}\big\}$, one has that
\begin{align}
\label{e:sk5}\langle\mu^{\rm o}|\Gamma_+\big(\big\{x^{\pm}\big\}\big)|\lambda^{\rm o}\rangle = \det(b_{ij})^{l+N}_{i,j=1},
\end{align}
where
\begin{align*}
b_{ij}=\begin{cases}
h_{\lambda_i-\mu_j-i+j}\big(x^{\pm}\big), &1\leq j\leq l,\\
h_{\lambda_i-i+j}\big(x^{\pm}\big)-h_{\lambda_i-i-j+2l}\big(x^{\pm}\big), & l+1\leq j\leq l+N.
\end{cases}
\end{align*}
\end{Proposition}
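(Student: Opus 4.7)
The plan is to adapt the proof of Proposition~\ref{pro3} (the symplectic case) almost verbatim, substituting $Y^*(z)\to W^*(z)$ throughout, the orthogonal antisymmetrization identity \eqref{e:o9} in place of \eqref{e:sp9}, and the orthogonal expansion \eqref{e:o10} of $\langle 0|\Gamma_+(\{x^{\pm}\})$ in place of \eqref{e:sp13}.

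First, from the explicit form of $W^*(z)$ one derives the intertwining relation
\begin{align*}
W^*(z)\Gamma_+\big(\big\{x^{\pm}\big\}\big)
&= \prod_{i=1}^N\frac{1}{(1-x_iz)(1-x_i^{-1}z)}\,\Gamma_+\big(\big\{x^{\pm}\big\}\big)\,W^*(z)\\
&= \sum_{k\geq 0} h_k\big(x^{\pm}\big)z^k\,\Gamma_+\big(\big\{x^{\pm}\big\}\big)\,W^*(z),
\end{align*}
whose component form is $W^*_{-n}\Gamma_+(\{x^{\pm}\}) = \Gamma_+(\{x^{\pm}\})\sum_{k\geq 0} h_k(x^{\pm})W^*_{-n-k}$. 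Using this repeatedly to move all $W^*$-factors in $\langle\mu^{\rm o}|=\langle 0|W^*_{-\mu_l}\cdots W^*_{-\mu_1}$ through $\Gamma_+(\{x^{\pm}\})$ and then inserting the expansion \eqref{e:o10} of $\langle 0|\Gamma_+(\{x^{\pm}\})$ gives
\begin{align*}
\langle\mu^{\rm o}|\Gamma_+\big(\big\{x^{\pm}\big\}\big)
&= \sum\langle 0|W^*_{-i_{l+N}}\cdots W^*_{-i_{l+1}}W^*_{-\mu_l-i_l}\cdots W^*_{-\mu_1-i_1}\\
&\qquad\times\prod_{k=1}^{l} h_{i_k}\big(x^{\pm}\big)\prod_{k=l+1}^{l+N}\big(h_{i_k}\big(x^{\pm}\big)-h_{i_k-2(k-l)}\big(x^{\pm}\big)\big),
\end{align*}
where the sum ranges over $i_1,\ldots,i_l\geq 0$ and $i_{l+k}\geq -N+k$ for $k=1,\ldots,N$.

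Next, for a fixed generalized partition $\lambda=(\lambda_1,\ldots,\lambda_{l+N})$, apply the orthogonal antisymmetrization \eqref{e:o9} to read off the coefficient of $\langle\lambda^{\rm o}|$ as an alternating sum over $\sigma\in S_{l+N}$. Pairing the $k$-th $W^*$-operator with $\lambda_{\sigma(k)}$: for $1\leq k\leq l$ this produces the single factor $h_{\lambda_{\sigma(k)}-\mu_k-\sigma(k)+k}(x^{\pm})$, while for $l+1\leq k\leq l+N$ the pair $h_{i_k}-h_{i_k-2(k-l)}$ combines with the two choices ($W^*_{-n}$ versus $\delta_k W^*_{n}$) in \eqref{e:o9} to yield $h_{\lambda_{\sigma(k)}-\sigma(k)+k}(x^{\pm})-h_{\lambda_{\sigma(k)}-\sigma(k)-k+2l}(x^{\pm})$. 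Summing over $\sigma$ assembles the entries $b_{ij}$ into the determinant $\det(b_{ij})_{1\leq i,j\leq l+N}$, and the orthonormality $\langle\lambda^{\rm o}|\lambda^{\rm o}\rangle=1$ from Theorem~\ref{th2} extracts the claimed formula.

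The principal bookkeeping subtlety is the $\delta_k = \delta_{\mu_l\neq 0}\delta_{\sigma(k)\neq l}$ factors appearing in \eqref{e:o9}, which are more delicate than the uniform sign flip in the symplectic identity \eqref{e:sp9}. However, these Kronecker deltas only suppress contributions that would otherwise lie outside the support of $h_n(x^{\pm})$ (where $h_n\equiv 0$ for $n<0$), so they do not affect the final determinantal entries. Once this bookkeeping is settled, the argument mirrors the symplectic calculation line for line, with the replacements $2l+2\to 2l$ and $+\to -$ in the right-hand block of the matrix, explaining also why the boundary index $l+1$ of the symplectic case shifts to $l$ in the orthogonal case.
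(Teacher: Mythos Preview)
Your proposal is correct and follows precisely the route the paper intends: the paper's own proof is the single sentence ``can be proved by the same method of Proposition~\ref{pro3}, \eqref{e:o9}, and \eqref{e:o10}'', and you have supplied exactly those substitutions ($Y^*\to W^*$, \eqref{e:sp9}$\to$\eqref{e:o9}, \eqref{e:sp13}$\to$\eqref{e:o10}) together with the correct intertwining relation for $W^*(z)$ with $\Gamma_+(\{x^{\pm}\})$. One small comment: your phrasing that the pair $h_{i_k}-h_{i_k-2(k-l)}$ ``combines with the two choices'' in \eqref{e:o9} slightly conflates two descriptions of the same mechanism---the minus term in \eqref{e:o10} already encodes the flip contribution, so when extracting the coefficient of $\langle\lambda^{\rm o}|$ one really only uses the $S_{l+N}$-permutation part of \eqref{e:o9} (just as in the passage from \eqref{e:sp13} to \eqref{e:sp14} in the symplectic proof); with that clarification your bookkeeping remark about the $\delta_k$ factors is accurate.
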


We also call the symmetric function in \eqref{e:sk5} the {\it skew Schur orthogonal function} ${\rm o}_{\lambda/\mu}(x)$. In the following, we will justify the definition.

Using the orthogonality relation \eqref{e:o3}, for generalized partition $\nu=(\nu_1,\dots,\nu_n)$, one has
\begin{align}
\label{e:o24}\langle\nu^{\rm o}|\sum_{\eta=(\eta_1,\dots,\eta_n)}|\eta^{\rm o}\rangle\langle\eta^{\rm o}| = \langle\nu^{\rm o}|,\qquad \sum_{\eta=(\eta_1,\dots,\eta_n)}|\eta^{\rm o}\rangle\langle\eta^{\rm o}||\nu^{\rm o}\rangle = |\nu^{\rm o}\rangle,
%&\langle\nu^{\rm o}|\sum_{\eta=(\eta_1,\dots,\eta_n)}|\eta^{\rm o}\rangle\langle\eta^{\rm o}| = \langle\nu^{\rm o}|,~~\sum_{\eta=(\eta_1,\dots,\eta_n)}|\eta^{\rm o}\rangle\langle\eta^{\rm o}||\nu^{\rm o}\rangle = |\nu^{\rm o}\rangle,
\end{align}
where the sum is over all generalized partitions $\eta=(\eta_1,\dots,\eta_n)$. Similar to the proof of Proposition~\ref{e:pro10}, we have the following statement.

\begin{Proposition}For $x^{\pm}=\big(x_1,x^{-1}_1,\dots,x_{n-k},x^{-1}_{n-k}\big)$ and $y^{\pm}=\big(y_1,y^{-1}_1,\dots,y_{k},y^{-1}_{k}\big)$, the functions ${\rm o}_{\lambda/\mu}$ satisfy the general branching rule
\begin{align}\label{e:o30}
{\rm o}_\lambda\big(x^{\pm};y^{\pm}\big)=\sum_{\mu=(\mu_1,\dots,\mu_{n-k})}o_\mu\big(x^{\pm}\big){\rm o}_{\lambda/\mu}\big(y^{\pm}\big).
\end{align}
\end{Proposition}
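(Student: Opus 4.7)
The plan is to mirror the proof of Proposition~\ref{e:pro10} for the symplectic branching rule, leveraging three ingredients that have already been established for the orthogonal family: the orthonormality relation \eqref{e:o24}, the expansion \eqref{e:o11} of $\langle 0|\Gamma_+(\{x^{\pm}\})$ in the basis $\{\langle \nu^{\rm o}|\}$, and the multiplicative factorization $\Gamma_+\big(\big\{x^{\pm};y^{\pm}\big\}\big)=\Gamma_+\big(\big\{x^{\pm}\big\}\big)\Gamma_+\big(\big\{y^{\pm}\big\}\big)$, which is immediate from the definition \eqref{e:eq10} of the half vertex operators.

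First I would specialize the defining identity \eqref{e:sk5} to the case $\mu=\varnothing$ (equivalently $l=0$) to obtain the identity $\langle 0|\Gamma_+\big(\big\{x^{\pm};y^{\pm}\big\}\big)|\lambda^{\rm o}\rangle = {\rm o}_\lambda\big(x^{\pm};y^{\pm}\big)$. This is the orthogonal analogue of \eqref{e:sp25} and may also be read off from \eqref{e:o11} followed by the orthonormality \eqref{e:o3}. Second, I would factor the vertex operator across the two variable sets and insert the resolution \eqref{e:o24} between the two factors, being careful to take the sum over generalized partitions $\mu=(\mu_1,\dots,\mu_{n-k})$ of the correct length $n-k$ so that the trailing zeros align correctly with the remaining $k$ pairs of variables.

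Third, I would evaluate the two resulting factors. The left factor $\langle 0|\Gamma_+\big(\big\{x^{\pm}\big\}\big)|\mu^{\rm o}\rangle$ collapses to ${\rm o}_\mu\big(x^{\pm}\big)$ by \eqref{e:o11} and the orthonormality \eqref{e:o3}, while the right factor $\langle\mu^{\rm o}|\Gamma_+\big(\big\{y^{\pm}\big\}\big)|\lambda^{\rm o}\rangle$ is precisely ${\rm o}_{\lambda/\mu}\big(y^{\pm}\big)$ by the defining equation \eqref{e:sk5}. Combining these two evaluations with the starting identity yields \eqref{e:o30}.

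The main subtlety, just as in the symplectic case, is the bookkeeping of trailing zeros: the vectors $\langle \mu^{\rm o}|$ depend on the total length chosen for the generalized partition $\mu$, and \eqref{e:o24} is only a resolution of the identity on the subspace spanned by generalized partitions of a fixed length. I would therefore fix the length of the summation index $\mu$ to be exactly $n-k$, so that the total length on both sides matches (the bra $\langle\lambda^{\rm o}|$ carries length $n$, and the half vertex operator $\Gamma_+\big(\big\{y^{\pm}\big\}\big)$ involves $k$ pairs of variables, consistent with the index shift in \eqref{e:sk5} when one replaces $(l,N)$ by $(n-k,k)$). Once this convention is in place, the argument is entirely parallel to the symplectic proof and no additional computation is needed.
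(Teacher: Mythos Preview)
Your proposal is correct and follows essentially the same route as the paper: the paper simply states that the proof is ``similar to the proof of Proposition~\ref{e:pro10}'', i.e., one computes $\langle 0|\Gamma_+\big(\big\{x^{\pm};y^{\pm}\big\}\big)|\lambda^{\rm o}\rangle$ two ways, once directly via \eqref{e:sk5} (or \eqref{e:o11}) and once by inserting the resolution \eqref{e:o24} between the factors $\Gamma_+\big(\big\{x^{\pm}\big\}\big)$ and $\Gamma_+\big(\big\{y^{\pm}\big\}\big)$. Your explicit attention to fixing the length of the generalized partition $\mu$ at $n-k$ is exactly the care needed to apply \eqref{e:o24} correctly.
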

\begin{Remark}
Based on the general branching rule for orthogonal Schur function \eqref{e:o30}, the function ${\rm o}_{\lambda/\mu}$ \eqref{e:sk5} is rightfully called the skew orthogonal Schur function. For $k=1$, the general branching rule \eqref{e:o30} reduces to the branching rule for the orthogonal Schur function~\mbox{\cite[Theorem 3.2]{KT1990}}.
\end{Remark}

\section[Gelfand--Tsetlin pattern representations for three skew-type functions]{Gelfand--Tsetlin pattern representations\\ for three skew-type functions}\label{s4}

In this section, we first derive the formulas for skew Schur functions in Gelfand--Tsetlin patterns using vertex operators. We then do the same for
skew symplectic/orthogonal Schur functions.

We start by recalling a special case of skew Schur functions in terms of vertex operators~\mbox{\cite[Proposition~2.4]{JL2020}}. We say a (generalized) partition
$\lambda$ interlaces another (generalized) partition $\nu$, denoted as $\nu\prec\lambda$, if $\lambda_{i}\ge\nu_{i}\geq\lambda_{i+1}$ for all $i$.
\begin{Lemma}[{\cite[p.~72]{Mac1995}}]\label{le3} For two interlacing partitions $\nu\prec\lambda$, $\nu=(\nu_{1},\dots,\nu_{l})$ and $\lambda=(\lambda_{1},\dots,\lambda_{l+1})$, one has that
\begin{align*}
{\rm s}_{\lambda/\nu}(t)=t^{|\lambda|-|\nu|}.
\end{align*}
\end{Lemma}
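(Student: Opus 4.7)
The plan is to derive the identity directly from the skew Jacobi--Trudi formula for Schur functions. For the two partitions $\nu = (\nu_{1}, \dots, \nu_{l})$ and $\lambda = (\lambda_{1}, \dots, \lambda_{l+1})$, with the standard convention $\nu_{l+1} = 0$, one has
\[
{\rm s}_{\lambda/\nu}(t) = \det\bigl(h_{\lambda_{i} - \nu_{j} - i + j}(t)\bigr)_{1 \le i, j \le l+1}.
\]
In a single variable, $h_{n}(t) = t^{n}$ for $n \ge 0$ and $h_{n}(t) = 0$ for $n < 0$, so every entry of this matrix is either a monomial in $t$ or zero.

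First I would show that the matrix is upper triangular by ruling out the entries strictly below the diagonal. For indices $j < i$, the interlacing hypothesis $\nu \prec \lambda$ gives $\nu_{j} \ge \lambda_{j+1} \ge \lambda_{i}$ using also that $\lambda$ is weakly decreasing, and therefore
\[
\lambda_{i} - \nu_{j} - i + j \le -(i-j) \le -1,
\]
so $h_{\lambda_{i} - \nu_{j} - i + j}(t) = 0$. The boundary sub-diagonal entry $(i,j) = (l+1, l)$ is handled by the interlacing inequality $\nu_{l} \ge \lambda_{l+1}$.

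The second step is simply to read off the determinant as the product of the diagonal entries. The $(i,i)$-entry equals $h_{\lambda_{i}-\nu_{i}}(t) = t^{\lambda_{i} - \nu_{i}}$, whose exponent is non-negative by the interlacing inequality $\lambda_{i} \ge \nu_{i}$. Multiplying over $i = 1, \dots, l+1$ yields $t^{\sum_{i}(\lambda_{i} - \nu_{i})} = t^{|\lambda|-|\nu|}$, which is the claimed identity.

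No substantive obstacle is expected here; the only care needed is the boundary convention $\nu_{l+1} = 0$ and the correct application of the interlacing condition to each off-diagonal position. As an alternative combinatorial route, one can argue directly from the tableau definition: in a single variable, any semistandard filling of $\lambda/\nu$ must place a $1$ in every box, and column-strictness then forces the skew shape to have at most one box per column, which is exactly the interlacing condition $\nu \prec \lambda$; the unique such tableau has weight $t^{|\lambda|-|\nu|}$.
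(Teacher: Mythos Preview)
Your argument is correct: the single-variable Jacobi--Trudi determinant is upper triangular by the interlacing inequalities, and the diagonal product gives $t^{|\lambda|-|\nu|}$; the alternative tableau argument you sketch is equally valid. The paper does not supply its own proof of this lemma---it is quoted directly from Macdonald~\cite[p.~72]{Mac1995} as a known fact---so there is nothing to compare your approach against beyond noting that your Jacobi--Trudi computation and the combinatorial horizontal-strip argument are both standard derivations of this classical identity.
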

It follows from Lemma~\ref{le3} that skew Schur functions can be written as a sum over Gelfand--Tsetlin patterns: for partitions $\mu=(\mu_1,\dots,\mu_l)\subset \lambda=(\lambda_1,\dots,\lambda_{l+N})$
\begin{align*}
{\rm s}_{\lambda/\mu}(x)=\sum_{\mu=z_0\prec z_1\prec\dots\prec z_N=\lambda}\prod^N_{i=1}x^{|z_i|-|z_{i-1}|}_i,
\end{align*}
summed over all sequences of partitions $\mu=z_0\prec z_1\prec\dots\prec z_N=\lambda$, where
$z_i=(z_{i,1},z_{i,2},\dots,\allowbreak z_{i,l+i})$ satisfy
 the {\it Gelfand--Tsetlin pattern}:
\begin{align*}
z_{i+1,j}\leq z_{i,j-1}\leq z_{i+1,j-1}\qquad \text{for}\quad 2\leq j\leq l+i+1.
\end{align*}

Using the similar method as in~\cite{JL2020}, we can get the Gelfand--Tsetlin pattern formulas for skew symplectic (orthogonal) Schur functions ${\rm sp}_{\lambda/\mu}\big(x^{\pm}\big)$ \big(${\rm o}_{\lambda/\mu}\big(x^{\pm}\big)$\big). We start by proving three lemmas.
\begin{Lemma}For any integer $m$ and generalized partitions $\alpha=(\alpha_1,\dots,\alpha_k)$, $\beta=(\beta_1,\dots,\beta_{k+2})$, one has
\begin{align*}
\langle\alpha^{\rm sp}|\sum_{i\leq m,\, j\leq m+1}Y^*_{i}Y^*_{j}t^{i+j}|\beta^{\rm sp}\rangle=0.
\end{align*}
\end{Lemma}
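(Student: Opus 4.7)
The plan is to prove the stronger statement that the operator sum
\begin{align*}
\Sigma(t) \;:=\; \sum_{\substack{i\leq m \\ j\leq m+1}} Y^*_i Y^*_j\, t^{i+j}
\end{align*}
vanishes identically, from which the claimed inner product identity is immediate. For this to make sense formally, I would first observe that for each fixed total exponent $N=i+j$, only finitely many pairs $(i,j)$ with $i\leq m$, $j\leq m+1$ contribute (namely $N-m-1\leq i\leq m$), so the coefficient of $t^N$ in $\Sigma(t)$ is a finite $\mathbb{C}$-linear combination of operators and the identity is well-posed.

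The key tool is the anticommutation relation $Y^*_i Y^*_j = -Y^*_{j-1} Y^*_{i+1}$ from \eqref{e:com}, which suggests the involution
\begin{align*}
\sigma \colon (i,j) \longmapsto (j-1,\, i+1)
\end{align*}
on the index set $I = \{(i,j) : i\leq m,\, j\leq m+1\}$. I would first check that $\sigma$ preserves $I$: if $i\leq m$ and $j\leq m+1$, then $j-1\leq m$ and $i+1\leq m+1$, so $\sigma(i,j)\in I$. A direct substitution shows $\sigma^2=\mathrm{id}$, and the total-degree statistic is conserved, since $(j-1)+(i+1)=i+j$, so the weight $t^{i+j}$ is the same for $(i,j)$ and $\sigma(i,j)$.

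Now I would split $I$ into $\sigma$-orbits. A non-fixed orbit consists of two distinct pairs $(i,j)$ and $(j-1,i+1)$, and the contribution of this orbit to $\Sigma(t)$ is
\begin{align*}
\bigl(Y^*_i Y^*_j + Y^*_{j-1} Y^*_{i+1}\bigr)\,t^{i+j} \;=\; 0
\end{align*}
by the commutation relation \eqref{e:com}. The fixed points are exactly the pairs with $j=i+1$ (for $i\leq m$), which contribute terms $Y^*_i Y^*_{i+1}\, t^{2i+1}$; applying the same commutation relation with $j=i+1$ yields $Y^*_i Y^*_{i+1} = -Y^*_i Y^*_{i+1}$, hence $Y^*_i Y^*_{i+1}=0$. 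Summing over all orbits gives $\Sigma(t)=0$, and sandwiching with $\langle\alpha^{\rm sp}|$ and $|\beta^{\rm sp}\rangle$ proves the lemma.

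I do not expect a real obstacle here, as the argument is purely formal once the correct involution is identified. The only point requiring a little care is the book-keeping that $\sigma$ maps $I$ to itself, which is why the asymmetric bounds $i\leq m$ and $j\leq m+1$ (rather than $i,j\leq m$) appear in the statement; the hypotheses on $\alpha,\beta$ and the shift in lengths play no role in the vanishing itself, and presumably enter only when this lemma is later combined with the structural relations for $\langle\alpha^{\rm sp}|$ and $|\beta^{\rm sp}\rangle$.
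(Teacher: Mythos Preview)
Your argument is correct and rests on the same key mechanism as the paper: the involution $(i,j)\mapsto(j-1,i+1)$ together with the relation $Y^*_iY^*_j=-Y^*_{j-1}Y^*_{i+1}$, which pairs terms into cancelling couples (and kills the diagonal $j=i+1$). The only difference is in how the infinite sum is made tractable. The paper first applies the sum to $|\beta^{\rm sp}\rangle$ and uses $Y^*_j|\beta^{\rm sp}\rangle=0$ for $j<-\beta_1$ (and the analogous bound for $i$) to truncate to a finite rectangular range, then observes that the substitution $a=j-1$, $b=i+1$ sends this rectangle bijectively to itself, so the sum equals its own negative. You instead extract the coefficient of each $t^N$ and note that the constraint $N-m-1\le i\le m$ already makes it a finite sum, on which the involution acts. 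Your route is slightly cleaner and yields the marginally stronger statement that $\Sigma(t)=0$ as a formal Laurent series in $t$ with operator coefficients, independent of the bracketing vectors; as you correctly remark, the hypotheses on $\alpha$ and $\beta$ play no role in the vanishing itself.
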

\begin{proof}
From \eqref{e:re1} and \eqref{e:com}, we know that $Y^*_{j}|\beta^{\rm sp}\rangle=0$ for $j<-\beta_1$. Therefore,
\begin{align*}
\sum_{i\leq m,\, j\leq m+1}Y^*_{i}Y^*_{j}t^{i+j}|\beta^{\rm sp}\rangle
&=\sum_{-\beta_1-1\leq i\leq m,\, -\beta_1\leq j\leq m+1}Y^*_{i}Y^*_{j}t^{i+j}|\beta^{\rm sp}\rangle\\
&=-\sum_{-\beta_1-1\leq i\leq m,\, -\beta_1\leq j\leq m+1}Y^*_{j-1}Y^*_{i+1}t^{i+j}|\beta^{\rm sp}\rangle\\
&=-\sum_{-\beta_1-1\leq a\leq m,\, -\beta_1\leq b\leq m+1}Y^*_{a}Y^*_{b}t^{a+b}|\beta^{\rm sp}\rangle,
\end{align*}
therefore $\sum_{i\leq m,\, j\leq m+1}Y^*_{i}Y^*_{j}t^{i+j}|\beta^{\rm sp}\rangle=0$, which completes the proof.
\end{proof}

\begin{Corollary}\label{cor1}
For any integer $m$ and generalized partition $\alpha=(\alpha_1,\dots,\alpha_k)$,
\begin{align*}
\langle\alpha^{\rm sp}|\sum_{i\leq m,\, j\leq m+1}Y^*_{i}Y^*_{j}t^{i+j}=0.
\end{align*}
\end{Corollary}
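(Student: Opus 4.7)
The plan is to follow the exact template of the preceding Lemma, this time carrying out the manipulation on the bra side. Denoting $S(m) = \sum_{i\leq m,\,j\leq m+1} Y^*_i Y^*_j\,t^{i+j}$, the basic step is to apply the anticommutation identity $Y^*_i Y^*_j = -Y^*_{j-1}Y^*_{i+1}$ from \eqref{e:com} term by term, rewriting
\[
\langle\alpha^{\rm sp}|\,S(m) \;=\; -\sum_{i\leq m,\,j\leq m+1} \langle\alpha^{\rm sp}|\,Y^*_{j-1}Y^*_{i+1}\,t^{i+j}.
\]
Then I would change variables by $a = j-1$ and $b = i+1$: the constraints $i\leq m$, $j\leq m+1$ become exactly $b\leq m+1$, $a\leq m$, so that $(a,b)$ runs over the same index set as $(i,j)$, while $t^{i+j} = t^{a+b}$. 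The right-hand side is therefore precisely $-\langle\alpha^{\rm sp}|\,S(m)$, giving $2\langle\alpha^{\rm sp}|\,S(m) = 0$ and hence the corollary.

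The only subtle point is justifying the reindexing for a formally infinite sum. In the previous Lemma one could first truncate to a finite sum by using that $Y^*_j|\beta^{\rm sp}\rangle = 0$ for $j < -\beta_1$, but the bra $\langle\alpha^{\rm sp}|$ does not admit an analogous annihilation for $Y^*_i$ on the right for any range of $i$, so that trick is unavailable here. To resolve this, I would view $\langle\alpha^{\rm sp}|\,S(m)$ as an element of the completion $\widetilde{\mathcal{M}^*}$ and verify the identity componentwise in the natural gradation on $\mathcal{M}^*$: in each fixed graded piece only finitely many index pairs $(i,j)$ contribute and the reindexing becomes a genuine finite-sum manipulation, which is precisely what the preceding Lemma already carried out on specific kets.

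A cleaner alternative, which I would actually prefer, is to invoke the preceding Lemma directly: its proof in fact establishes the stronger statement that $S(m)|\beta^{\rm sp}\rangle = 0$ for every generalized partition $\beta$ of length $k+2$ (the bra $\langle\alpha^{\rm sp}|$ plays no role in that computation). Pairing with $\langle\alpha^{\rm sp}|$ then shows $\langle\alpha^{\rm sp}|\,S(m)|\beta^{\rm sp}\rangle = 0$ for all generalized $\beta$ of length $k+2$. Since $\langle\alpha^{\rm sp}|\,S(m)$, by construction, lies in the closed span of $\{\langle\beta^{\rm sp}|:l(\beta)=k+2\}$ (each summand $\langle 0|Y^*_{-\alpha_k}\cdots Y^*_{-\alpha_1}Y^*_iY^*_j$ is, by \eqref{e:sp9} after rewriting the indices, of the form $\pm\langle\beta^{\rm sp}|$ for some length-$(k+2)$ generalized partition $\beta$), the orthonormality result in Theorem~\ref{th2} forces $\langle\alpha^{\rm sp}|\,S(m)=0$. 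The main obstacle in either route is book-keeping for the infinite sum, not any new algebraic input.
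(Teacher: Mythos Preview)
Your proposal is correct, and your second route is exactly what the paper has in mind: the statement is presented in the paper as an immediate Corollary of the preceding Lemma with no separate argument, so the intended reasoning is precisely that the Lemma already shows $S(m)|\beta^{\rm sp}\rangle=0$ for every generalized $\beta$ of length $k+2$, and then the orthonormality of Theorem~\ref{th2} (equivalently the resolution of identity \eqref{e:sp24}) forces $\langle\alpha^{\rm sp}|\,S(m)=0$.

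Your first route, the direct reindexing via $Y^*_iY^*_j=-Y^*_{j-1}Y^*_{i+1}$, is a legitimate shortcut that avoids passing through a ket, but as you yourself note, making the infinite-sum reindexing rigorous amounts to checking the identity after pairing with each $|\beta^{\rm sp}\rangle$, which is exactly the Lemma again; so the two routes are not genuinely different. One small wording point: in your orthonormality argument, each summand $\langle 0|Y^*_{-\alpha_k}\cdots Y^*_{-\alpha_1}Y^*_iY^*_j$ is either $\pm\langle\beta^{\rm sp}|$ for some generalized $\beta$ of length $k+2$ \emph{or zero} (the straightening relations can kill terms); this does not affect your conclusion.
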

\begin{Lemma}\label{le4}
For generalized partitions $\nu=(\nu_{1},\dots,\nu_{l})$ and $\lambda=(\lambda_{1},\dots,\lambda_{l+1})$, one has
\begin{align*}
{\rm sp}_{\lambda/\nu}\big(t^{\pm}\big)=\sum_{\nu\prec\alpha\prec\lambda}t^{2|\alpha|-|\lambda|-|\nu|}
\end{align*}
summed over all generalized partitions $\alpha$ such that $\nu\prec\alpha\prec\lambda$.
\end{Lemma}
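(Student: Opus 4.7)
The plan is to reduce the skew symplectic Schur function ${\rm sp}_{\lambda/\nu}\big(t^{\pm}\big)$ to the ordinary skew Schur function $s_{\lambda/\nu}$ evaluated in the two-variable alphabet $\big(t,t^{-1}\big)$, and then apply the classical one-variable interlacing lemma (Lemma~\ref{le3}) twice.

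First I would specialize the Jacobi--Trudi determinant of Proposition~\ref{pro3} to the case $N=1$. Since $\lambda$ has exactly $l+1$ parts, the second branch $l+2\le j\le l+1$ in~\eqref{e:eq6} is empty, and every entry of the $(l+1)\times(l+1)$ matrix $(a_{ij})$ is of the form $a_{ij}=h_{\lambda_i-\nu_j-i+j}\big(t^{\pm}\big)$ under the convention $\nu_{l+1}=0$. This is precisely the classical Jacobi--Trudi matrix for the ordinary skew Schur function $s_{\lambda/\nu}$ in the alphabet $\big(t,t^{-1}\big)$ (appending a trailing zero to $\nu$ does not change the skew Schur function), so
\[
{\rm sp}_{\lambda/\nu}\big(t^{\pm}\big)=s_{\lambda/\nu}\big(t,t^{-1}\big),
\]
as already noted in Remark~\ref{e:remark1}.

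Next I would apply the standard branching rule $s_{\lambda/\nu}\big(t,t^{-1}\big)=\sum_{\alpha}s_{\alpha/\nu}(t)\,s_{\lambda/\alpha}\big(t^{-1}\big)$, summed over (generalized) partitions $\alpha$ with $\nu\subset\alpha\subset\lambda$. By Lemma~\ref{le3}, the factor $s_{\alpha/\nu}(t)$ equals $t^{|\alpha|-|\nu|}$ when $\nu\prec\alpha$ and vanishes otherwise, and similarly $s_{\lambda/\alpha}\big(t^{-1}\big)=t^{-(|\lambda|-|\alpha|)}$ when $\alpha\prec\lambda$. Only the terms with $\nu\prec\alpha\prec\lambda$ survive, and combining the monomials gives
\[
{\rm sp}_{\lambda/\nu}\big(t^{\pm}\big)=\sum_{\nu\prec\alpha\prec\lambda}t^{|\alpha|-|\nu|-(|\lambda|-|\alpha|)}=\sum_{\nu\prec\alpha\prec\lambda}t^{2|\alpha|-|\lambda|-|\nu|},
\]
which is the desired identity.

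There is no substantive obstacle; the only care needed is with lengths. One should verify that extending $\nu$ by a trailing zero makes the interlacing $\nu\prec\alpha\prec\lambda$ (with $\alpha$ of length $l+1$) coincide with the natural generalized-partition convention, and that the collapse of Proposition~\ref{pro3} at $N=1$ genuinely recovers the classical skew Schur Jacobi--Trudi determinant. Both checks are immediate.
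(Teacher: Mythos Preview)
Your argument is correct. The specialization $N=1$ in Proposition~\ref{pro3} indeed leaves only the first branch of~\eqref{e:eq6}, so the $(l{+}1)\times(l{+}1)$ determinant is the ordinary Jacobi--Trudi matrix for $s_{\lambda/\nu}$ in the alphabet $\big(t,t^{-1}\big)$; this is exactly the observation of Remark~\ref{e:remark1}. The two-variable branching rule together with Lemma~\ref{le3} then gives the interlacing sum immediately, and the resulting constraints on $\alpha$ match the paper's conditions $\max\{\lambda_{j+1},\nu_j\}\le\alpha_j\le\min\{\lambda_j,\nu_{j-1}\}$ and $0\le\alpha_{l+1}\le\min\{\lambda_{l+1},\nu_l\}$.

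This route is genuinely different from the paper's. The paper works directly with the expansion $\langle\nu^{\rm sp}|\Gamma_+\big(t^{\pm}\big)=\sum h_i\big(t^{\pm}\big)\prod h_{i_j}\big(t^{\pm}\big)\langle0|Y^*_{-i}Y^*_{-\nu_l-i_l}\cdots Y^*_{-\nu_1-i_1}$ and performs a lengthy vertex-operator manipulation, using Corollary~\ref{cor1} and the explicit form $h_i\big(t^{\pm}\big)=\frac{t^{-i}-t^{i+2}}{1-t^2}$ to collapse the double sum step by step into the $\min/\max$ form. Your approach is considerably shorter and more conceptual, exploiting that the symplectic Jacobi--Trudi determinant degenerates to the type~A one at $N=1$. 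The trade-off is that the paper's computation is uniform: it carries over to Lemma~\ref{le2} for ${\rm o}_{\lambda/\nu}\big(t^{\pm}\big)$ essentially unchanged, whereas your reduction fails in the orthogonal case (the column $j=l+1$ in~\eqref{e:sk5} reads $h_{\lambda_i-i+l+1}-h_{\lambda_i-i+l-1}$, which is not a skew Schur entry), so a separate argument would be needed there.
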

\begin{proof}It follows from equation \eqref{e:sp14} that
\begin{align}\label{e:sp10}
\langle\nu^{\rm sp}|\Gamma_+\big(t^{\pm}\big)
=\sum_{i,i_1,\dots,i_l\geq 0}h_i\big(t^{\pm}\big)\prod^l_{j=1}h_{i_j}\big(t^{\pm}\big)\langle 0|Y^*_{-i}Y^*_{-\nu_l-i_l}\cdots Y^*_{-\nu_1-i_1},
\end{align}
where $h_i\big(t^{\pm}\big)=t^{-i}+t^{-i+2}+\dots+t^{i-2}+t^i=\frac{t^{-i}-t^{i+2}}{1-t^2}$.
Since $Y^*_{j-1}Y^*_{j}=0$ and Corollary~\ref{cor1}, we have
\begin{align*}
&\sum_{i\geq \nu_l+1,j\geq \nu_l}\langle 0|Y^*_{-i}Y^*_{-j}\bigl(t^{-i}-t^{i+2}\bigr)\bigl(t^{\nu_l-j}-t^{j-\nu_l+2}\bigr)\\
&\qquad\quad{}=\sum_{i\geq \nu_l+1,j\geq \nu_l}\langle 0|Y^*_{-i}Y^*_{-j}\bigl(-t^{i+2+\nu_l-j}-t^{j-\nu_l+2-i}\bigr)\\
&\qquad\quad{}=\sum_{j\geq i\geq \nu_l+1}\langle 0|Y^*_{-i}Y^*_{-j}\bigl(-t^{i+2+\nu_l-j}-t^{j-\nu_l+2-i}\bigr)\\
&\qquad\qquad{}+\sum_{i-1> j\geq\nu_l}\langle 0|Y^*_{-i}Y^*_{-j}\bigl(-t^{i+2+\nu_l-j}-t^{j-\nu_l+2-i}\bigr)\\
&\qquad\quad{}=\sum_{j\geq i\geq \nu_l+1}\langle 0|Y^*_{-i}Y^*_{-j}\bigl(-t^{i+2+\nu_l-j}-t^{j-\nu_l+2-i}\bigr)\\
&\qquad\qquad{}+\sum_{i-1> j\geq\nu_l}\langle 0|Y^*_{-j-1}Y^*_{-i+1}\bigl(t^{i+2+\nu_l-j}+t^{j-\nu_l+2-i}\bigr)\\
&\qquad\quad{}=\sum_{j\geq i\geq \nu_l+1}\langle 0|Y^*_{-i}Y^*_{-j}\bigl(-t^{i+2+\nu_l-j}-t^{j-\nu_l+2-i}\bigr)\\
&\qquad\qquad{}+\sum_{n\geq m\geq \nu_l+1}\langle 0|Y^*_{-m}Y^*_{-n}\bigl(t^{n+4+\nu_l-m}+t^{m-\nu_l-n}\bigr)\\
&\qquad\quad{}=\sum_{j\geq i\geq \nu_l+1}\langle 0|Y^*_{-i}Y^*_{-j}\bigl(t^{j+4+\nu_l-i}+t^{i-\nu_l-j}-t^{i+2+\nu_l-j}-t^{j-\nu_l+2-i}\bigr)\\
%=&\sum_{j\geq i\geq \nu_l+1}\langle 0|Y^*_{-i}Y^*_{-j}\bigl(t^{j+4+\nu_l-i}+t^{i-\nu_l-j}-t^{i+2+\nu_l-j}-t^{j-\nu_l+2-i}\bigr)\\
&\qquad\quad{}=\sum_{j\geq i\geq \nu_l+1}\langle 0|Y^*_{-i}Y^*_{-j}\bigl(t^{i-j}-t^{j-i+2}\bigr)\bigl(t^{-\nu_l}-t^{2+\nu_l}\bigr),
\end{align*}
where we have split $i\geq \nu_l+1$, $j\geq \nu_l$ into $i-j=1$, $i-j>1$ and $i-j\leq 0$, in other words, $i=j+1$, $i-1> j\geq\nu_l$, $j\geq i\geq \nu_l+1$.
Thus
\begin{align*}
\notag&\sum_{i,i_l\geq 0}h_i\big(t^{\pm}\big)h_{i_l}\big(t^{\pm}\big)\langle 0|Y^*_{-i}Y^*_{-\nu_l-i_l}\\
\notag&\quad{}=\frac{1}{\big(1-t^2\big)^2}\sum_{i,i_l\geq 0}\langle 0|Y^*_{-i}Y^*_{-\nu_l-i_l}\big(t^{-i}-t^{i+2}\big)\big(t^{-i_l}-t^{i_l+2}\big)\\
\notag&\quad{}=\sum_{0\leq i\leq \nu_l,\,i_l\geq 0}\langle 0|Y^*_{-i}Y^*_{-\nu_l-i_l}h_i\big(t^{\pm}\big)h_{i_l}\big(t^{\pm}\big)\\ \notag
&\qquad{}+\frac{1}{\big(1-t^2\big)^2}\sum_{i\geq \nu_l+1,\,i_l\geq 0}\langle 0|Y^*_{-i}Y^*_{-\nu_l-i_l}\big(t^{-i}-t^{i+2}\big)\big(t^{-i_l}-t^{i_l+2}\big)\\
\notag&\quad{}=\sum_{0\leq i\leq \nu_l,\,i_l\geq 0}\langle 0|Y^*_{-i}Y^*_{-\nu_l-i_l}h_i\big(t^{\pm}\big)h_{i_l}\big(t^{\pm}\big)+\sum_{\nu_l+i_l\geq i\geq \nu_l+1}\langle 0|Y^*_{-i}Y^*_{-\nu_l-i_l}h_{\nu_l}\big(t^{\pm}\big)h_{\nu_l+i_l-i}\big(t^{\pm}\big).
\end{align*}
Setting $\gamma_1=\nu_l+i_l\geq \gamma_2=i\geq 0$, then the above equation becomes
\begin{align*}
&\sum_{i,i_l\geq 0}h_i\big(t^{\pm}\big)h_{i_l}\big(t^{\pm}\big)\langle 0|Y^*_{-i}Y^*_{-\nu_l-i_l}\\
&\qquad\quad{}=\sum_{\max\{\gamma_2,\nu_l\}\leq \gamma_1}\langle 0|Y^*_{-\gamma_2}Y^*_{-\gamma_1}h_{\min\{\gamma_2,\nu_l\}}\big(t^{\pm}\big)h_{\gamma_1-\max\{\gamma_2,\nu_l\}}\big(t^{\pm}\big).
\end{align*}
Continuing the process, equation \eqref{e:sp10} becomes
\begin{gather*}
 \langle\nu^{\rm sp}|\Gamma_+\big(t^{\pm}\big)\\
 \qquad\quad{}=\sum_{\lambda}\langle\lambda^{\rm sp}|h_{\min\{\lambda_{l+1},\nu_l\}}\big(t^{\pm}\big)\Bigg(\prod^{l}_{j=2}h_{\min\{\lambda_j,\nu_{j-1}\}-\max\{\lambda_{j+1},\nu_j\}}\big(t^{\pm}\big)\Bigg)h_{\lambda_1-\max\{\lambda_2,\nu_1\}}\big(t^{\pm}\big)\\
 \qquad\quad{}=\sum_{\lambda}\langle\lambda^{\rm sp}|h_{\min\{\lambda_{l+1},\nu_l\}}\big(t^{\pm}\big)t^{\min\{\lambda_{l+1},\nu_l\}}\Bigg(\prod^{l}_{j=2}h_{\min\{\lambda_j,\nu_{j-1}\}-\max\{\lambda_{j+1},\nu_j\}}\big(t^{\pm}\big)\\
 \qquad\quad\hphantom{=\sum_{\lambda}}{}
\times t^{\min\{\lambda_j,\nu_{j-1}\}+\max\{\lambda_{j+1},\nu_j\}}\Bigg)h_{\lambda_1-\max\{\lambda_2,\nu_1\}}\big(t^{\pm}\big)t^{\lambda_1+\max\{\lambda_2,\nu_1\}} t^{-|\lambda|-|\nu|}\\ \notag
 \qquad\quad{}=\sum_{\lambda}\langle\lambda^{\rm sp}|\sum_{\nu\prec\alpha\prec\lambda}t^{2|\alpha|-|\lambda|-|\nu|},
\end{gather*}
 where we use the fact that \[
 |\lambda|+|\nu|=\min\{\lambda_{l+1},\nu_l\}+\sum^l_{j=2}\min\{\lambda_j,\nu_{j-1}\}+\sum^l_{j=2}\max\{\lambda_{j+1},\nu_j\}+\lambda_1+\max\{\lambda_2,\nu_1\}
 \] and the notations
\begin{gather*}
 0\leq \alpha_{l+1}\leq\min\{\lambda_{l+1},\nu_l\}, \qquad \max\{\lambda_{j+1},\nu_j\}\leq \alpha_{j}\leq\min\{\lambda_j,\nu_{j-1}\},\\ \max\{\lambda_2,\nu_1\}\leq\alpha_1\leq \lambda_1.
\end{gather*}
Thus the proof is completed.
\end{proof}

\begin{Lemma}\label{le2}
For generalized partitions $\nu=(\nu_{1},\dots,\nu_{l})$ and $\lambda=(\lambda_{1},\dots,\lambda_{l+1})$, one has
\begin{align*}
{\rm o}_{\lambda/\nu}\big(t^{\pm}\big)=\sum_{\nu\prec\alpha\prec\lambda}(1+\delta_{\lambda_{l+1}>0}\delta_{\nu_{l},0})t^{2|\alpha|-|\lambda|-|\nu|}
\end{align*}
summed over all partitions $\alpha$ such that $\nu\prec\alpha\prec\lambda$ and $\alpha_{l+1}\in \{0,\min\{\lambda_{l+1},\nu_{l}\}\}$.
\end{Lemma}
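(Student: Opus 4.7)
The plan is to parallel the proof of Lemma~\ref{le4} with $W^{*}$ in place of $Y^{*}$, adapting each step to reflect the vacuum relation $\langle 0|W^{*}_{n}=+\langle 0|W^{*}_{-n}$ (as opposed to $\langle 0|Y^{*}_{n}=-\langle 0|Y^{*}_{-n+2}$). From $W^{*}(z)\Gamma_{+}(t^{\pm})=\frac{1}{(1-tz)(1-t^{-1}z)}\Gamma_{+}(t^{\pm})W^{*}(z)$ one extracts $W^{*}_{-n}\Gamma_{+}(t^{\pm})=\Gamma_{+}(t^{\pm})\sum_{i\geq 0}h_{i}(t^{\pm})W^{*}_{-n-i}$, and combining this with the single-variable case of \eqref{e:o10} produces
\[
\langle\nu^{\rm o}|\Gamma_{+}(t^{\pm})=\sum_{i,i_{1},\ldots,i_{l}\geq 0}\bigl(h_{i}(t^{\pm})-h_{i-2}(t^{\pm})\bigr)\prod_{j=1}^{l}h_{i_{j}}(t^{\pm})\,\langle 0|W^{*}_{-i}W^{*}_{-\nu_{l}-i_{l}}\cdots W^{*}_{-\nu_{1}-i_{1}}.
\]

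Next I would establish the orthogonal analog of Corollary~\ref{cor1}, namely $\langle\alpha^{\rm o}|\sum_{i\leq m,\,j\leq m+1}W^{*}_{i}W^{*}_{j}t^{i+j}=0$, whose proof is the identical swap using $W^{*}_{i}W^{*}_{j}=-W^{*}_{j-1}W^{*}_{i+1}$. I would then evaluate the rightmost double sum
\[
\sum_{i,i_{l}\geq 0}\bigl(h_{i}(t^{\pm})-h_{i-2}(t^{\pm})\bigr)h_{i_{l}}(t^{\pm})\,\langle 0|W^{*}_{-i}W^{*}_{-\nu_{l}-i_{l}}
\]
by splitting into the ranges $i\leq\nu_{l}$ and $i\geq\nu_{l}+1$, using $\langle 0|W^{*}_{n}=\langle 0|W^{*}_{-n}$ to fold the high-$i$ portion, and applying the analog of Corollary~\ref{cor1} to telescope. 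Iterating this reduction through $i_{l-1},\ldots,i_{1}$ from right to left (exactly as in Lemma~\ref{le4}) collapses the tower into a sum over interlacing sequences $\nu\prec\alpha\prec\lambda$; pairing with $|\lambda^{\rm o}\rangle$ and invoking the orthonormality \eqref{e:o3} then extracts ${\rm o}_{\lambda/\nu}(t^{\pm})$.

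The main obstacle is the treatment of the bottom row $\alpha_{l+1}$, where the orthogonal case genuinely diverges from the symplectic one. In Lemma~\ref{le4} the relation $\langle 0|Y^{*}_{1}=0$ makes the intermediate values of the rightmost summation variable survive, giving the full interval $\alpha_{l+1}\in[0,\min(\lambda_{l+1},\nu_{l})]$. In the orthogonal setting, the $+$~sign in $\langle 0|W^{*}_{n}=\langle 0|W^{*}_{-n}$ combined with $W^{*}_{i}W^{*}_{j}=-W^{*}_{j-1}W^{*}_{i+1}$ forces intermediate values to cancel pairwise, so only the two endpoints $\alpha_{l+1}\in\{0,\min(\lambda_{l+1},\nu_{l})\}$ remain. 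When $\nu_{l}=0$ and $\lambda_{l+1}>0$ these two endpoints collapse to the single state $\alpha_{l+1}=0$, but the contribution is accumulated once from each of the two columns in \eqref{e:o9} at the bottom factor, yielding the doubling $1+\delta_{\lambda_{l+1}>0}\delta_{\nu_{l},0}$. The delicate point is to verify that this doubling is localized to the bottom row and does not propagate to the entries $\alpha_{1},\ldots,\alpha_{l}$, which requires careful bookkeeping of the vacuum terms $\langle 0|W^{*}_{0}$ at each iteration.
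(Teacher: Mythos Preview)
Your proposal is correct and takes essentially the same approach as the paper, which simply says ``using the same method for Lemma~\ref{le4}'' and displays the outcome with the bottom factor $h_{\min\{\lambda_{l+1},\nu_l\}}(t^{\pm})$ replaced by $t^{-\min\{\lambda_{l+1},\nu_l\}}+t^{\min\{\lambda_{l+1},\nu_l\}}$, while the intermediate factors for $j=2,\ldots,l$ remain the same $h_{\min\{\lambda_j,\nu_{j-1}\}-\max\{\lambda_{j+1},\nu_j\}}(t^{\pm})$ as in the symplectic case. The two-endpoint restriction on $\alpha_{l+1}$ and the doubling both come directly from this single change (since $t^{-m}+t^{m}$ has two terms and equals $2$ at $m=0$), so there is no propagation to the higher entries to worry about, and invoking the two columns of \eqref{e:o9} is a less direct route than simply reading off this factor.
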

\begin{proof}
Using the same method for Lemma~\ref{le4}, we have
\begin{gather*}
 \langle\nu^{\rm o}|\Gamma_+\big(t^{\pm}\big) =\sum_{\lambda}\langle\lambda^{\rm o}|\big(t^{-\min\{\lambda_{l+1},\nu_l\}}+t^{\min\{\lambda_{l+1},\nu_l\}}\big)\\
 \quad{}\times\Bigg(\prod^{l}_{j=2}h_{\min\{\lambda_j,\nu_{j-1}\}-\max\{\lambda_{j+1},\nu_j\}}\big(t^{\pm}\big)\Bigg)
 h_{\lambda_1-\max\{\lambda_2,\nu_1\}}\big(t^{\pm}\big)
\\
 = \sum_{\lambda}\langle\lambda^{\rm o}|\big(1+t^{2\min\{\lambda_{l+1},\nu_l\}}\big) \Bigg(\prod^{l}_{j=2}h_{\min\{\lambda_j,\nu_{j-1}\}-\max\{\lambda_{j+1},\nu_j\}}\big(t^{\pm}\big)t^{\min\{\lambda_j,\nu_{j-1}\}+\max\{\lambda_{j+1},\nu_j\}}\Bigg)\\
 \quad\times h_{\lambda_1-\max\{\lambda_2,\nu_1\}}\big(t^{\pm}\big)t^{\lambda_1+\max\{\lambda_2,\nu_1\}} t^{-|\lambda|-|\nu|}\\
= \sum_{\lambda}\langle\lambda^{\rm o}|\sum_{\nu\prec\alpha\prec\lambda}\big(1+\delta_{\lambda_{l+1}>0}\delta_{\nu_{l},0}\big)t^{2|\alpha|-|\lambda|-|\nu|},
\end{gather*}
summed over $\alpha$ such that $\nu\prec\alpha\prec\lambda$ and $\alpha_{l+1}\in \{0,\min\{\lambda_{l+1},\nu_{l}\}\}$.
\end{proof}

The following theorem directly follows from Lemmas~\ref{le4} and~\ref{le2}.
\begin{Theorem}
For generalized partitions $\mu=(\mu_1,\dots,\mu_l)\subset \lambda=(\lambda_1,\dots,\lambda_{l+N})$, one has
\begin{align}
\label{e:sp19}&{\rm sp}_{\lambda/\mu}\big(x^{\pm}\big) = \sum_{\mu=z_0\prec z_1\prec\dots\prec z_{2N}=\lambda}\prod^N_{i=1}x^{2|z_{2i-1}|-|z_{2i}|-|z_{2i-2}|}_i,\\
\label{e:o19}&{\rm o}_{\lambda/\mu}\big(x^{\pm}\big) = \sum_{\mu=z^{\prime}_0\prec z^{\prime}_1\prec\dots\prec z^{\prime}_{2N}=\lambda}\prod^N_{i=1}\big(1+\delta_{z^{\prime}_{2i,l+i}>0}\delta_{z^{\prime}_{2i-2,l+i-1},0}\big)x^{2|z^{\prime}_{2i-1}| -|z^{\prime}_{2i}|-|z^{\prime}_{2i-2}|}_i,
\end{align}
where generalized partitions $z_k=\big(z_{k,1},\dots,z_{k,l+\lceil\frac{k}{2}\rceil}\big)$ satisfy the symplectic Gelfand--Tsetlin pattern
\begin{align*}
z_{k+1,j}\leq z_{k,j-1}\leq z_{k+1,j-1}\qquad\text{for}\quad 2\leq j\leq l+\biggl\lceil \frac{k+1}{2}\biggr\rceil
\end{align*}
and generalized partitions $z^{\prime}_k=\big(z^{\prime}_{k,1},\dots,z^{\prime}_{k,l+\lceil\frac{ k}{2}\rceil}\big)$ satisfy the orthogonal Gelfand--Tsetlin pattern
\begin{align*}
z^{\prime}_{k+1,j}\leq z^{\prime}_{k,j-1}\leq z^{\prime}_{k+1,j-1}\qquad \text{for}\quad 2\leq j\leq l+\biggl\lceil\frac{k+1}{2}\biggr\rceil
\end{align*}
subject to $z^{\prime}_{2i-1,l+i}\in\big\{0,\min\big\{z^{\prime}_{2i,l+i},z^{\prime}_{2i-2,l+i-1}\big\}\big\}$.
\end{Theorem}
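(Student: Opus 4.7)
The plan is to derive both formulas by iterating the single-variable identities of Lemmas~\ref{le4} and~\ref{le2}, using the factorization $\Gamma_+(\{x^{\pm}\})=\prod_{i=1}^N\Gamma_+(x_i^{\pm})$ together with the completeness relations~\eqref{e:sp24} and~\eqref{e:o24}.

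For the symplectic formula~\eqref{e:sp19}, I would start from
\begin{align*}
{\rm sp}_{\lambda/\mu}\big(x^{\pm}\big)=\langle\mu^{\rm sp}|\Gamma_+\big(x_1^{\pm}\big)\Gamma_+\big(x_2^{\pm}\big)\cdots\Gamma_+\big(x_N^{\pm}\big)|\lambda^{\rm sp}\rangle
\end{align*}
and insert, between each pair of consecutive half vertex operators, the resolution of identity $\sum_{w_i}|w_i^{\rm sp}\rangle\langle w_i^{\rm sp}|$ over generalized partitions $w_i$ of length $l+i$, with boundary values $w_0=\mu$ and $w_N=\lambda$. Each resulting factor is a one-variable skew symplectic Schur function ${\rm sp}_{w_i/w_{i-1}}(x_i^{\pm})$, to which Lemma~\ref{le4} applies, yielding
\begin{align*}
{\rm sp}_{w_i/w_{i-1}}\big(x_i^{\pm}\big)=\sum_{w_{i-1}\prec\alpha_i\prec w_i}x_i^{2|\alpha_i|-|w_i|-|w_{i-1}|}.
\end{align*}
Relabeling $z_{2i-2}=w_{i-1}$ and $z_{2i-1}=\alpha_i$ (so $z_{2i}=w_i$), the lengths become exactly $l+\lceil k/2\rceil$, the constraints $w_{i-1}\prec\alpha_i\prec w_i$ translate into the symplectic Gelfand--Tsetlin interlacings $z_{2i-2}\prec z_{2i-1}\prec z_{2i}$, and multiplying the weights produces~\eqref{e:sp19}.

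The orthogonal formula~\eqref{e:o19} follows by the same argument with $(\cdot)^{\rm sp}$ replaced by $(\cdot)^{\rm o}$ and the resolution~\eqref{e:o24} inserted in place of~\eqref{e:sp24}. For each single-variable block, Lemma~\ref{le2} produces the multiplicity factor $(1+\delta_{\lambda_{l+1}>0}\delta_{\nu_l,0})$ together with the restriction $\alpha_{l+1}\in\{0,\min\{\lambda_{l+1},\nu_l\}\}$. Under the same relabeling, these turn into the product $\prod_i(1+\delta_{z^{\prime}_{2i,l+i}>0}\delta_{z^{\prime}_{2i-2,l+i-1},0})$ and the boundary condition $z^{\prime}_{2i-1,l+i}\in\{0,\min\{z^{\prime}_{2i,l+i},z^{\prime}_{2i-2,l+i-1}\}\}$ of the orthogonal Gelfand--Tsetlin pattern.

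The main bookkeeping point, and arguably the only real obstacle, is matching lengths so that each inserted $w_i$ has length exactly $l+i$, ensuring that Lemmas~\ref{le4} and~\ref{le2} apply to a pair of shapes differing by one row, and verifying that composing the single-step interlacings produces Gelfand--Tsetlin chains with the prescribed length profile $l+\lceil k/2\rceil$ for $0\leq k\leq 2N$. Once this indexing is set up, the rest of the derivation is purely formal.
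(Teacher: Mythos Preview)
Your proposal is correct and is precisely the argument the paper has in mind: the paper's proof consists of the single sentence ``The following theorem directly follows from Lemmas~\ref{le4} and~\ref{le2},'' and your write-up simply fills in the iteration via the factorization $\Gamma_+(\{x^{\pm}\})=\prod_i\Gamma_+(x_i^{\pm})$ and the completeness relations~\eqref{e:sp24}, \eqref{e:o24}, which the paper has already used in the same way to prove the general branching rule (Proposition~\ref{e:pro10}). The length bookkeeping you flag is indeed the only point to watch, and you have it right: $z_{2i-2}=w_{i-1}$ has length $l+i-1$, $z_{2i-1}=\alpha_i$ and $z_{2i}=w_i$ have length $l+i$, matching $l+\lceil k/2\rceil$.
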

\begin{Remark}
One can find an equivalent statement of \eqref{e:sp19} in~\cite{AF2020}, which defined the symplectic Gelfand--Tsetlin pattern by induction. For $\mu=\varnothing$, \eqref{e:sp19} recovers the classical Gelfand--Tsetlin patterns for symplectic/orthogonal Schur functions \cite[p.~310]{Pro1994}, and \eqref{e:o19} reduces to the multi-orthogonal Gelfand--Tsetlin pattern~\cite[p.~324]{Pro1994}.
\end{Remark}

\section{Cauchy-type identities and general branching rules}\label{s5}

In this section, we give a new proof of Cauchy identities for (symplectic/orthogonal) Schur functions by the method of vertex operators. This approach is logically independent of the Jacobi--Trudi formula. The method can be further extended to
skew Schur functions as well as skew symplectic/orthogonal Schur functions.
\begin{Lemma}\label{le5}For $x=(x_1,x_2,\dots,x_N)$, we have
\begin{align}
&\Gamma_-(\{x\})|0\rangle=\sum_{\lambda}{\rm s}_\lambda(x)|\lambda\rangle, \label{e:sc11}\\
&\Gamma_-(\{x\})|0\rangle=\prod_{1\leq i<j\leq N}(1-x_ix_j)^{-1}\sum_{\lambda}{\rm s}_\lambda(x)|\lambda^{\rm sp}\rangle, \nonumber\\ %\label{e:sp7}
&\Gamma_-(\{x\})|0\rangle=\prod_{1\leq i\leq j\leq N}(1-x_ix_j)^{-1}\sum_{\lambda}{\rm s}_\lambda(x)|\lambda^{\rm o}\rangle,\nonumber %\label{e:o7}
\end{align}
where the sum is over all partitions $\lambda$ with $l(\lambda)\leq N$.
\end{Lemma}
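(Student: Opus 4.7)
All three identities expand the same vector $\Gamma_-(\{x\})|0\rangle$ in one of the (quasi-)orthonormal bases from Theorem~\ref{th2}, so by that orthonormality it suffices in each case to compute the coefficient of $|\lambda\rangle$, $|\lambda^{\rm sp}\rangle$, or $|\lambda^{\rm o}\rangle$. My plan is to first rewrite $\Gamma_-(\{x\})$ as a product of the appropriate creation field ($X$, $Y$, or $W$) followed by $\Gamma_+$-factors that annihilate $|0\rangle$, and then extract the coefficients from the resulting field product by a bialternant computation.

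From the definitions \eqref{e:ve1} and \eqref{e:eq10} one reads off the factorizations $X(z)=\Gamma_-(z)\Gamma_+(z^{-1})^{-1}$, $Y(z)=\Gamma_-(z)\Gamma_+(z^{-1})^{-1}\Gamma_+(z)^{-1}$, and $W(z)=(1-z^2)Y(z)$. Solving for $\Gamma_-$, taking a product over $i=1,\dots,N$, and pushing every $\Gamma_+(x_i^{\pm 1})^{-1}$ past the fields to its right using the standard commutation $\Gamma_+(w)\Gamma_-(z)=(1-zw)^{-1}\Gamma_-(z)\Gamma_+(w)$ (together with $\Gamma_+(w)|0\rangle=|0\rangle$), one obtains the three rewrites
\begin{gather*}
\Gamma_-(\{x\})|0\rangle = \prod_{i<j}(1-x_j/x_i)^{-1}\,X(x_1)\cdots X(x_N)|0\rangle,\\
\Gamma_-(\{x\})|0\rangle = \prod_{i<j}(1-x_j/x_i)^{-1}(1-x_ix_j)^{-1}\,Y(x_1)\cdots Y(x_N)|0\rangle,\\
\Gamma_-(\{x\})|0\rangle = \prod_{i<j}(1-x_j/x_i)^{-1}\prod_{i\le j}(1-x_ix_j)^{-1}\,W(x_1)\cdots W(x_N)|0\rangle,
\end{gather*}
the diagonal factor $\prod_i(1-x_i^2)^{-1}$ in the orthogonal case coming from the $(1-z^2)$ prefactor in $W(z)$.

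The three field products on the right are then expanded in identical fashion. Writing $X(x_i)=\sum_n X_n x_i^{-n}$, the commutation $X_iX_j+X_{j+1}X_{i-1}=0$ from \eqref{e:com} yields the left-handed analog of the permutation identity \eqref{e:sc5}:
\begin{align*}
\varepsilon(\sigma)X_{n_1}\cdots X_{n_N}|0\rangle=X_{n_{\sigma(1)}+\sigma(1)-1}X_{n_{\sigma(2)}+\sigma(2)-2}\cdots X_{n_{\sigma(N)}+\sigma(N)-N}|0\rangle,
\end{align*}
and the same identity holds verbatim with $Y$ or $W$ in place of $X$ since the $Y$- and $W$-commutations in \eqref{e:com} have identical form. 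Anti-symmetrizing over $S_N$ collects the coefficient of $|\lambda\rangle$ in $X(x_1)\cdots X(x_N)|0\rangle$ as the alternant $\det(x_i^{\lambda_j+i-j})=\prod_i x_i^{i-N}\det(x_i^{\lambda_j+N-j})$, which via the Vandermonde identification $\det(x_i^{N-j})=\prod_{i<j}(x_i-x_j)=\prod_i x_i^{N-i}\prod_{i<j}(1-x_j/x_i)$ and \eqref{e:s50} simplifies to $\prod_{i<j}(1-x_j/x_i)\,{\rm s}_\lambda(x)$. The identical manipulation produces the analogous coefficients of $|\lambda^{\rm sp}\rangle$ in $Y(x_1)\cdots Y(x_N)|0\rangle$ and of $|\lambda^{\rm o}\rangle$ in $W(x_1)\cdots W(x_N)|0\rangle$. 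Substituting into the three rewrites above cancels the leading $\prod_{i<j}(1-x_j/x_i)^{-1}$ factor in each and yields the three claimed identities.

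The main obstacle is the bookkeeping of commutation prefactors: one must verify that pushing each $\Gamma_+(x_i^{\pm 1})^{-1}$ past the remaining $Y$'s or $W$'s produces precisely the claimed combination of $(1-x_ix_j)$ and $(1-x_j/x_i)$ factors, and in the orthogonal case that the diagonal $(1-x_i^2)^{-1}$ coming from the $(1-z^2)$ in $W(z)$ combines with the off-diagonal product to upgrade the strict inequality to $\prod_{i\le j}(1-x_ix_j)^{-1}$. Once this accounting is complete and the permutation identity for $X$, $Y$, $W$ is established (by the same argument used for \eqref{e:sc8} inside the proof of Proposition~\ref{pro10}), the rest is a direct parallel of that earlier computation.
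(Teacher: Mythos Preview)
Your proposal is correct and follows essentially the same approach as the paper: rewrite $\Gamma_-(\{x\})|0\rangle$ as a product of the fields $X(x_i)$ (respectively $Y(x_i)$, $W(x_i)$) acting on $|0\rangle$ with the appropriate scalar prefactor, then use the permutation identity \eqref{e:sc5}/\eqref{e:sp1} and the Vandermonde--bialternant computation to extract the coefficient of each basis vector. The paper in fact only writes out the $X$-case in detail and dispatches the $Y$- and $W$-cases with ``can be proved similarly with the help of \eqref{e:sp1}'', whereas you spell out the extra $(1-x_ix_j)^{-1}$ and diagonal $(1-x_i^2)^{-1}$ bookkeeping; both are the same argument.
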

\begin{proof}
We first prove \eqref{e:sc11} by a similar method as in Proposition~\ref{pro10}, which is different from that of~\cite{JL2020}.
Using the relations
\[
X(z)\Gamma_+\big(z^{-1}\big)=\Gamma_-(z) \qquad \text{and}\qquad \Gamma_+\big(z^{-1}_1\big)\Gamma_-(z_2)=\left(1-\frac{z_2}{z_1}\right)^{-1}\Gamma_-(z_2)\Gamma_+\big(z^{-1}_1\big),
\]
one has
\begin{align}
\Gamma_-(\{x\})|0\rangle&{}=\prod_{1\leq i<j\leq N}\bigg(1-\frac{x_j}{x_i}\bigg)^{-1}X(x_1)\cdots X(x_N)|0\rangle\notag\\
&{}=\prod_{1\leq i<j\leq N}\bigg(1-\frac{x_j}{x_i}\bigg)^{-1}\sum_{n_1,\dots,n_N}x^{n_1}_1\cdots x^{n_N}_NX_{-n_1}\cdots X_{-n_N}|0\rangle.\label{e:sc1}
\end{align}
 Using $X_iX_j=-X_{j+1}X_{i-1}$, the factors can be shuffled into the following expression:
\begin{align}\label{e:sc15}
\varepsilon(\sigma)X_{-\lambda_1}\cdots X_{-\lambda_N}=X_{-\lambda_{\sigma(1)}+\sigma(1)-1}\cdots X_{-\lambda_{\sigma(i)}+\sigma(i)-i}\cdots X_{-\lambda_{\sigma(N)}+\sigma(N)-N},
\end{align}
where $\sigma\in S_N$. Thus the coefficient of $|\lambda\rangle=X_{-\lambda_1}\cdots X_{-\lambda_N}|0\rangle$ in $\Gamma_-(\{x\})|0\rangle$ \eqref{e:sc1} is
\begin{align*}
&\prod_{1\leq i<j\leq N}\bigg(1-\frac{x_j}{x_i}\bigg)^{-1}\sum_{\sigma}\varepsilon(\sigma)x^{\lambda_{\sigma(1)}-\sigma(1)+1}_1\cdots x^{\lambda_{\sigma(i)}-\sigma(i)+i}_i\cdots x^{\lambda_{\sigma(N)}-\sigma(N)+N}_N\\
&\qquad\quad{}=\prod_{1\leq i<j\leq N} (x_i-x_j)^{-1}\sum_{\sigma}\varepsilon(\sigma)x^{\lambda_{\sigma(1)}-\sigma(1)+N}_1\cdots x^{\lambda_{\sigma(i)}-\sigma(i)+N}_i\cdots x^{\lambda_{\sigma(N)}-\sigma(N)+N}_N\\
&\qquad\quad{}=\frac{\det\big(x^{\lambda_j+N-j}_i\big)_{1\leq i,j\leq N}}{\prod_{1\leq i<j\leq N} (x_i-x_j)}
 = \frac{\det\big(x^{\lambda_j+N-j}_i\big)_{1\leq i,j\leq N}}{\det\big(x^{N-j}_i\big)_{1\leq i,j\leq N}} = {\rm s}_\lambda(x).
\end{align*}
Then the relation \eqref{e:or} gives $\langle\lambda|\Gamma_-(\{x\})|0\rangle={\rm s}_\lambda(x)$ for any $\lambda$, subsequently
\begin{align*}
\Gamma_-(\{x\})|0\rangle=\sum_{\substack{\lambda\\l(\lambda)\leq N}}{\rm s}_\lambda(x)|\lambda\rangle.
\end{align*}
The other two relations can be proved similarly with the help of
\begin{align}
&\varepsilon(\sigma)Y_{-\lambda_1}\cdots Y_{-\lambda_N}=Y_{-\lambda_{\sigma(1)}+\sigma(1)-1}\cdots Y_{-\lambda_{\sigma(i)}+\sigma(i)-i}\cdots Y_{-\lambda_{\sigma(N)}+\sigma(N)-N},\nonumber\\
&\varepsilon(\sigma)W_{-\lambda_1}\cdots W_{-\lambda_N}=W_{-\lambda_{\sigma(1)}+\sigma(1)-1}\cdots W_{-\lambda_{\sigma(i)}+\sigma(i)-i}\cdots W_{-\lambda_{\sigma(N)}+\sigma(N)-N},\label{e:sp1}
\end{align}
where $\sigma\in S_N$.
\end{proof}

By direct calculation, we have the following two lemmas.
\begin{Lemma}\label{le} From the commutation relations \eqref{e:he1} and \eqref{e:eq10}, we have
\begin{align*}
&\langle 0|\Gamma_+(\{x\})\Gamma_-(\{y\})|0\rangle = \prod^N_{i=1}\prod^K_{j=1}\big(1-x_iy_j\big)^{-1},
\end{align*}
where $x=(x_1,\dots,x_N),~y=(y_1,\dots,y_K)$.
\end{Lemma}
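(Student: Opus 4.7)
The plan is to reduce the inner product to a pure scalar by commuting every $\Gamma_-(y_j)$ past every $\Gamma_+(x_i)$, and then using the fact that $\Gamma_-$ (resp.\ $\Gamma_+$) annihilates the dual vacuum (resp.\ vacuum).

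First I would establish the single-variable commutation relation
\begin{equation*}
\Gamma_+(x)\Gamma_-(y)=(1-xy)^{-1}\Gamma_-(y)\Gamma_+(x).
\end{equation*}
This follows from the Baker--Campbell--Hausdorff identity $\mathrm{e}^A\mathrm{e}^B=\mathrm{e}^{[A,B]}\mathrm{e}^B\mathrm{e}^A$ valid whenever $[A,B]$ is central. Writing
\begin{equation*}
A=\sum_{n\ge 1}\frac{a_n}{n}x^n,\qquad B=\sum_{m\ge 1}\frac{a_{-m}}{m}y^m,
\end{equation*}
the Heisenberg relation \eqref{e:he1} gives $[a_n,a_{-m}]=n\delta_{nm}$, so
\begin{equation*}
[A,B]=\sum_{n\ge 1}\frac{(xy)^n}{n}=-\log(1-xy),
\end{equation*}
which is indeed central. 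Exponentiating yields the claimed commutation relation.

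Next I would iterate this identity to move each factor $\Gamma_-(y_j)$ to the left of all $\Gamma_+(x_i)$. Since the $\Gamma_+(x_i)$ mutually commute (they are exponentials of elements in the same abelian subalgebra), and likewise for $\Gamma_-(y_j)$, a routine induction gives
\begin{equation*}
\Gamma_+(\{x\})\Gamma_-(\{y\})=\prod_{i=1}^{N}\prod_{j=1}^{K}(1-x_iy_j)^{-1}\,\Gamma_-(\{y\})\Gamma_+(\{x\}).
\end{equation*}

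Finally I would insert this between $\langle 0|$ and $|0\rangle$ and invoke the elementary identities $\langle 0|\Gamma_-(\{y\})=\langle 0|$ and $\Gamma_+(\{x\})|0\rangle=|0\rangle$, which follow from $a_n|0\rangle=0=\langle 0|a_{-n}$ for $n>0$. With $\langle 0|0\rangle=1$, the scalar prefactor is the whole answer. There is no real obstacle; the only point to keep in mind is that all series involved are formal power series in $x_i,y_j$ of nonnegative degree, so convergence/well-definedness of the infinite-degree exponentials and the logarithmic expansion is automatic in the formal category.
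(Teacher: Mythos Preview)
Your proposal is correct and is exactly the ``direct calculation'' the paper has in mind: the paper does not spell out a proof of this lemma beyond the phrase ``By direct calculation,'' and the intended argument is precisely the Baker--Campbell--Hausdorff commutation $\Gamma_+(x)\Gamma_-(y)=(1-xy)^{-1}\Gamma_-(y)\Gamma_+(x)$ followed by the vacuum identities $\langle 0|\Gamma_-=\langle 0|$ and $\Gamma_+|0\rangle=|0\rangle$, which the paper also uses repeatedly elsewhere.
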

Now we study some cases of the inner product $\langle 0|\Gamma_+(\{x\})\Gamma_-(\{y\})|0\rangle$ related to symmetric functions. Due to \eqref{e:or}--\eqref{e:o3}, \eqref{e:sc8}, \eqref{e:sp12}, \eqref{e:o11} and Lemma~\ref{le5}, we have the following result.
\begin{Lemma}\label{le6} For $x=(x_1,\dots,x_N)$, $y=(y_1,\dots,y_N)$, $x^{\pm}=\big(x^{\pm}_1,\dots,x^{\pm}_N\big)$, we have
\begin{align*}
&\langle 0|\Gamma_+(\{x\})\Gamma_-(\{y\})|0\rangle=\sum_{\substack{\lambda\\l(\lambda)\leq N}}{\rm s}_\lambda(x){\rm s}_\lambda(y),\\ %\label{e:sc3}
&\langle 0|\Gamma_+\big(\big\{x^{\pm}\big\}\big)\Gamma_-(\{y\})|0\rangle=\prod_{1\leq k< l\leq N}(1-y_ky_l)^{-1}\sum_{\substack{\lambda\\l(\lambda)\leq N}}{\rm sp}_\lambda\big(x^{\pm}\big){\rm s}_\lambda(y),\\ %\label{e:sp4}
&\langle 0|\Gamma_+\big(\big\{x^{\pm}\big\}\big)\Gamma_-(\{y\})|0\rangle=\prod_{1\leq k\leq l\leq N}(1-y_ky_l)^{-1}\sum_{\substack{\lambda\\l(\lambda)\leq N}}{\rm o}_\lambda\big(x^{\pm}\big){\rm s}_\lambda(y). %\label{e:o4}
\end{align*}
\end{Lemma}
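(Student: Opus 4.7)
The plan is to read off each of the three identities as an immediate bilinear pairing between factorizations we already possess on the left and the right, with the orthonormality relations of Theorem~\ref{th2} collapsing the resulting double sum to a single one. For every case the bra $\langle 0|\Gamma_+$ has been expanded earlier as a sum of distinguished vectors indexed by (generalized) partitions with coefficient equal to a character, and Lemma~\ref{le5} supplies the dual expansion of $\Gamma_-(\{y\})|0\rangle$ in the matching basis with coefficients $\mathrm{s}_\lambda(y)$.

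Concretely, for the first identity I would insert \eqref{e:sc8}, namely $\langle 0|\Gamma_+(\{x\})=\sum_\mu\langle\mu|\mathrm{s}_\mu(x)$, and \eqref{e:sc11}, namely $\Gamma_-(\{y\})|0\rangle=\sum_\lambda\mathrm{s}_\lambda(y)|\lambda\rangle$, both sums being over partitions of length $\leq N$. The orthogonality $\langle\mu|\lambda\rangle=\delta_{\mu\lambda}$ from \eqref{e:or} then collapses the double sum to $\sum_\lambda\mathrm{s}_\lambda(x)\mathrm{s}_\lambda(y)$.

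For the symplectic identity the same mechanism applies: substitute \eqref{e:sp12} for the bra and the second identity of Lemma~\ref{le5} for the ket, at which point the scalar Vandermonde-type factor $\prod_{1\leq k<l\leq N}(1-y_ky_l)^{-1}$ factors out on the right while orthogonality \eqref{e:sp3} collapses the remaining double sum to the desired diagonal pairing $\sum_\lambda\mathrm{sp}_\lambda(x^\pm)\mathrm{s}_\lambda(y)$. The orthogonal identity is proved identically, with \eqref{e:o11} replacing \eqref{e:sp12}, the third identity of Lemma~\ref{le5} replacing the second, and \eqref{e:o3} replacing \eqref{e:sp3}; the only arithmetic change is that the Vandermonde-type prefactor now includes the diagonal, producing $\prod_{1\leq k\leq l\leq N}(1-y_ky_l)^{-1}$.

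The only bookkeeping point that needs care — and which I expect to be the mildest obstacle rather than a genuine difficulty — is the matching of the two index sets. In \eqref{e:sp12} and \eqref{e:o11} the characters $\mathrm{sp}_\lambda$ and $\mathrm{o}_\lambda$ are indexed by generalized partitions $\lambda=(\lambda_1,\dots,\lambda_N)$ in which some trailing entries may vanish, whereas Lemma~\ref{le5} parameterizes the matching basis vectors $|\lambda^{\rm sp}\rangle$ and $|\lambda^{\rm o}\rangle$ by partitions with $l(\lambda)\leq N$. These index sets coincide, since a generalized partition of length $N$ is precisely a partition of length at most $N$ padded with zeros, and the orthonormality statements \eqref{e:sp3} and \eqref{e:o3} are formulated exactly on this padded version, as emphasized in the Remark following Theorem~\ref{th2}. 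Once this identification is in place, the three identities follow without further computation.
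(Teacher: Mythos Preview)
Your proposal is correct and follows exactly the route the paper indicates: the paper's proof consists of the single sentence ``Due to \eqref{e:or}--\eqref{e:o3}, \eqref{e:sc8}, \eqref{e:sp12}, \eqref{e:o11} and Lemma~\ref{le5}, we have the following result,'' and you have unpacked precisely those ingredients, including the index-matching caveat that the paper leaves implicit.
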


Combining Lemmas~\ref{le} and~\ref{le6}, we can prove the classical Cauchy identities associated to Schur functions, symplectic and orthogonal Schur functions respectively.
\begin{Theorem}[\cite{Be2018,KT1987,Su1990}]%\label{pro2}
Let $x=(x_1,\dots,x_N)$, $y=(y_1,\dots,y_N)$, $x^{\pm}=\big(x^{\pm}_1,\dots,x^{\pm}_N\big)$. Then
\begin{align*}
&\sum_{\substack{\lambda\\l(\lambda)\leq N}}{\rm s}_\lambda(x){\rm s}_\lambda(y)=\prod^{N}_{i,j=1}(1-x_iy_j)^{-1},\\%\label{e:sc4}
&\sum_{\substack{\lambda\\l(\lambda)\leq N}}{\rm sp}_\lambda\big(x^{\pm}\big){\rm s}_\lambda(y)=\prod^{N}_{i,j=1}\bigl[(1-x_iy_j)\big(1-x^{-1}_iy_j\big)\bigr]^{-1}\prod_{1\leq k< l\leq N}(1-y_ky_l),\\%\label{e:sp5}
&\sum_{\substack{\lambda\\l(\lambda)\leq N}}{\rm o}_\lambda\big(x^{\pm}\big){\rm s}_\lambda(y)=\prod^{N}_{i,j=1}\big[(1-x_iy_j)\big(1-x^{-1}_iy_j\big)\big]^{-1}\prod_{1\leq k\leq l\leq N}(1-y_ky_l),%\label{e:o5}
\end{align*}
where each summation is over partitions $\lambda$ with $l(\lambda)\leq N$.
\end{Theorem}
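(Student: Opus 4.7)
The plan is to obtain all three Cauchy identities by equating the two different expressions for the matrix element $\langle 0|\Gamma_+(\cdot)\Gamma_-(\{y\})|0\rangle$ supplied by Lemmas \ref{le} and \ref{le6}, with the appropriate choice of variables fed into $\Gamma_+$.

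First, for the type A identity I would set the $\Gamma_+$ argument to $\{x\} = (x_1,\dots,x_N)$. Lemma \ref{le} evaluates the matrix element directly as $\prod_{i,j=1}^{N}(1-x_iy_j)^{-1}$, while Lemma \ref{le6} expands it as $\sum_{l(\lambda)\leq N} {\rm s}_\lambda(x){\rm s}_\lambda(y)$. Equating these two forms yields the Schur Cauchy identity at once.

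Next, for the symplectic case I would feed the doubled alphabet $\{x^{\pm}\} = (x_1,x_1^{-1},\dots,x_N,x_N^{-1})$ into $\Gamma_+$. Applying Lemma \ref{le} with these $2N$ variables in place of $x$ (and $y$ still of length $N$) gives
\begin{align*}
\langle 0|\Gamma_+\big(\big\{x^{\pm}\big\}\big)\Gamma_-(\{y\})|0\rangle=\prod_{i,j=1}^{N}\bigl[(1-x_iy_j)\big(1-x_i^{-1}y_j\big)\bigr]^{-1}.
\end{align*}
On the other hand, Lemma \ref{le6} expresses the same matrix element as $\prod_{1\leq k<l\leq N}(1-y_ky_l)^{-1}\sum_{l(\lambda)\leq N}{\rm sp}_\lambda\big(x^{\pm}\big){\rm s}_\lambda(y)$. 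Rearranging the factor $\prod_{k<l}(1-y_ky_l)^{-1}$ to the other side produces exactly the symplectic Cauchy identity. The orthogonal identity is obtained in precisely the same way: the same application of Lemma \ref{le} gives the identical product, and the orthogonal half of Lemma \ref{le6} carries the factor $\prod_{1\leq k\leq l\leq N}(1-y_ky_l)^{-1}$, which moves to the right-hand side.

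Since Lemmas \ref{le} and \ref{le6} have already been established, there is no genuine obstacle here; the proof is a one-line comparison once the correct specializations are made. The only point requiring a moment of care is bookkeeping on the diagonal in the orthogonal case, namely that the factor is indexed by $k\leq l$ (producing $\prod_k(1-y_k^2)^{-1}$ in addition to the off-diagonal contributions) versus $k<l$ in the symplectic case; this mirrors the two different Vandermonde-type identities used earlier in \eqref{e:sp12} and \eqref{e:o11}, and accounts for the asymmetry between the symplectic and orthogonal right-hand sides.
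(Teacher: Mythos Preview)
Your proposal is correct and follows precisely the paper's own argument: the theorem is stated as an immediate consequence of combining Lemmas~\ref{le} and~\ref{le6}, exactly as you describe. The only detail you spell out that the paper leaves implicit is the specialization of Lemma~\ref{le} to the $2N$-variable alphabet $\{x^{\pm}\}$, which is harmless since that lemma is proved for arbitrary $N$ and~$K$.
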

The following is a generalization of Lemma~\ref{le5}.
\begin{Lemma}For a generalized partition $\mu=(\mu_1,\dots,\mu_l)$ and $x=(x_1,x_2,\dots,x_N)$, we have
\begin{align}
\label{e:sc19}&\Gamma_-(\{x\})|\mu\rangle=\sum_{\substack{\lambda\\l(\lambda)\leq l+N}}{\rm s}_{\lambda/\mu}(x)|\lambda\rangle,\\
\label{e:sp20}&\Gamma_-(\{x\})|\mu^{\rm sp}\rangle=\prod_{1\leq i<j\leq N}(1-x_ix_j)^{-1}\sum_{\lambda=(\lambda_1,\dots,\lambda_{l+N})}{\rm S}^*_{\lambda/\mu}(x)|\lambda^{\rm sp}\rangle,\\
&\Gamma_-(\{x\})|\mu^{\rm o}\rangle=\prod_{1\leq i\leq j\leq N}(1-x_ix_j)^{-1}\sum_{\lambda=(\lambda_1,\dots,\lambda_{l+N})}{\rm S}^*_{\lambda/\mu}(x)|\lambda^{\rm o}\rangle,\nonumber%\label{e:o20}
\end{align}
where the second and third sums are over generalized partitions $\lambda=(\lambda_1,\dots,\lambda_{l+N})$ with zeros parts allowed in the end, and
\begin{align*}
{\rm S}^*_{\lambda/\mu}(x)=\frac{\det(c_{ij})^{l+N}_{i,j=1}}{\det\big(x^{N-j}_i\big)_{1\leq i,j\leq N}}
\end{align*}
with
\[
c_{ij}=\begin{cases}
x^{\lambda_j-j+N}_i, &1\leq i\leq N,\\
h_{\mu_{i-N}-\lambda_j+j-i}\big(x^{\pm}\big), & N+1\leq i\leq N+l.
\end{cases}
\]
\end{Lemma}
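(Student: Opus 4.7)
The plan is to extend the direct vertex-operator approach used in Lemma~\ref{le5} (the case $\mu=\varnothing$) by commuting $\Gamma_-(\{x\})$ past the creation operators defining $|\mu\rangle$, $|\mu^{\rm sp}\rangle$, and $|\mu^{\rm o}\rangle$. In each case I first derive a commutation relation of the form $\Gamma_-(\{x\})F(z)=R(z)F(z)\Gamma_-(\{x\})$ (with $F=X,Y,W$) by normal-ordering the product of exponentials in \eqref{e:ve1} and \eqref{e:eq10}, then extract coefficients in $z$ to pass $\Gamma_-(\{x\})$ through each factor $F_{-\mu_i}$, and finally substitute the expansion of $\Gamma_-(\{x\})|0\rangle$ from Lemma~\ref{le5}.

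For \eqref{e:sc19}, normal ordering yields $X(z)\Gamma_-(\{x\})=\prod_i(1-x_iz^{-1})\,\Gamma_-(\{x\})X(z)$, so that $\Gamma_-(\{x\})X_{-n}=\sum_{m\ge0}h_m(x)X_{-n-m}\Gamma_-(\{x\})$. Passing $\Gamma_-(\{x\})$ through all the $X_{-\mu_i}$'s in $|\mu\rangle$ and then applying $\Gamma_-(\{x\})|0\rangle=\sum_\nu {\rm s}_\nu(x)|\nu\rangle$, the coefficient of $|\lambda\rangle$ can be antisymmetrized via \eqref{e:sc15} and, after invoking the Jacobi--Trudi identity \eqref{e:JT1}, collapses to $\det(h_{\lambda_i-\mu_j-i+j}(x))_{1\le i,j\le l+N}={\rm s}_{\lambda/\mu}(x)$, which is the claim.

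For the symplectic case \eqref{e:sp20}, normal-ordering gives
\[
\Gamma_-(\{x\})Y(z)=\prod_{i=1}^{N}\frac{1}{(1-x_iz)(1-x_iz^{-1})}\,Y(z)\Gamma_-(\{x\}).
\]
Using the elementary identity $(1-x_iz^{-1})^{-1}=-x_i^{-1}z(1-x_i^{-1}z)^{-1}$, the prefactor equals $(-1)^N(x_1\cdots x_N)^{-1}z^N\sum_{k\in\mathbb{Z}}h_k(x^\pm)z^k$, so that
\[
\Gamma_-(\{x\})Y_{-\mu}=(-1)^N(x_1\cdots x_N)^{-1}\sum_{j\in\mathbb{Z}}h_{j-N}(x^\pm)Y_{j-\mu}\Gamma_-(\{x\}).
\]
Applying this $l$ times to $|\mu^{\rm sp}\rangle=Y_{-\mu_1}\cdots Y_{-\mu_l}|0\rangle$ and substituting $\Gamma_-(\{x\})|0\rangle=\prod_{i<j}(1-x_ix_j)^{-1}\sum_\nu {\rm s}_\nu(x)|\nu^{\rm sp}\rangle$ from Lemma~\ref{le5}, I would express $\Gamma_-(\{x\})|\mu^{\rm sp}\rangle$ as a multiple sum whose $Y$-part $Y_{j_1-\mu_1}\cdots Y_{j_l-\mu_l}|\nu^{\rm sp}\rangle$ is antisymmetrized via $Y_iY_j=-Y_{j+1}Y_{i-1}$ into an alternating sum over $S_{l+N}$. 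Substituting the bialternant ${\rm s}_\nu(x)=\det(x_i^{\nu_j+N-j})/\det(x_i^{N-j})$ then identifies this alternating sum as the Laplace expansion of $\det(c_{ij})$ along the first $N$ rows divided by the Vandermonde $\det(x_i^{N-j})$: the rows $c_{ij}=x_i^{\lambda_j-j+N}$ ($i\le N$) come from the bialternant numerator, while the rows $c_{ij}=h_{\mu_{i-N}-\lambda_j+j-i}(x^\pm)$ ($i>N$) arise from the $h_{j_k-N}(x^\pm)$ factors. The third identity is proved by the same argument with $Y$ replaced by $W$: the analogous commutation $\Gamma_-(\{x\})W(z)=\prod_i[(1-x_iz)(1-x_iz^{-1})]^{-1}W(z)\Gamma_-(\{x\})$ has the same form, and the third line of Lemma~\ref{le5} contributes the wider prefactor $\prod_{1\le i\le j\le N}(1-x_ix_j)^{-1}$.

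The main obstacle will be the determinantal bookkeeping in the symplectic and orthogonal cases: one must carefully track the cumulative factor $(-1)^{Nl}(x_1\cdots x_N)^{-l}$ produced by $l$ applications of the commutation relation, and verify that after combination with the Vandermonde denominator and the antisymmetrization over $S_{l+N}$, the result is precisely the Laplace expansion of $\det(c_{ij})/\det(x_i^{N-j})={\rm S}^*_{\lambda/\mu}(x)$ as defined in the lemma.
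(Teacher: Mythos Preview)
Your treatment of \eqref{e:sc19} is fine and is essentially the dual of the computation in Appendix~\ref{appA}. The symplectic and orthogonal cases, however, contain a genuine gap. The commutation
\[
\Gamma_-(\{x\})Y(z)=\prod_{i=1}^N\frac{1}{(1-x_iz)(1-x_iz^{-1})}\,Y(z)\Gamma_-(\{x\})
\]
is correct, but only with $(1-x_iz^{-1})^{-1}$ expanded as $\sum_{n\ge0}x_i^nz^{-n}$; that is what the normal ordering actually produces. Your rewriting $(1-x_iz^{-1})^{-1}=-x_i^{-1}z(1-x_i^{-1}z)^{-1}$ is an identity of rational functions, not of formal Laurent series: the right-hand side expands as $-\sum_{n\ge1}x_i^{-n}z^{n}$, a \emph{different} series. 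Hence your mode relation $\Gamma_-(\{x\})Y_{-\mu}=(-1)^N(x_1\cdots x_N)^{-1}\sum_j h_{j-N}(x^\pm)Y_{j-\mu}\Gamma_-(\{x\})$ is false (already for $N=1$: the coefficient of $Y_{-\mu}$ on the right is $0$, while the correct expansion gives $\sum_{k\ge0}x_1^{2k}$). The determinantal identification you sketch therefore does not go through; the rows $h_{\mu_{i-N}-\lambda_j+j-i}(x^\pm)$ simply do not arise this way.

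The paper avoids this obstacle by a different factorization. Instead of commuting $\Gamma_-(\{x\})$ past the $Y_{-\mu_j}$, it uses $\Gamma_-(z)=Y(z)\Gamma_+(z)\Gamma_+(z^{-1})$ together with the clean relation $\Gamma_+(w)Y(z)=(1-wz)^{-1}Y(z)\Gamma_+(w)$ to write
\[
\Gamma_-(\{x\})=\frac{1}{\prod_{1\le i<j\le N}(1-x_ix_j)\bigl(1-x_i^{-1}x_j\bigr)}\,Y(x_1)\cdots Y(x_N)\,\Gamma_+\bigl(\{x^\pm\}\bigr).
\]
Now only $\Gamma_+(\{x^\pm\})$ must be pushed through $Y_{-\mu_1}\cdots Y_{-\mu_l}$, and here the prefactor $\prod_i[(1-x_iz)(1-x_i^{-1}z)]^{-1}=\sum_{k\ge0}h_k(x^\pm)z^k$ involves only nonnegative powers of $z$, giving the unambiguous relation $\Gamma_+(\{x^\pm\})Y_n=\sum_{i\ge0}h_i(x^\pm)Y_{n+i}\Gamma_+(\{x^\pm\})$. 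Since $\Gamma_+(\{x^\pm\})|0\rangle=|0\rangle$, one obtains directly
\[
\sum_{k_1,\dots,k_N\in\mathbb Z}\,\sum_{k_{N+1},\dots,k_{N+l}\ge0}
x_1^{k_1}\cdots x_N^{k_N}\prod_{r>N}h_{k_r}(x^\pm)\,
Y_{-k_1}\cdots Y_{-k_N}Y_{-\mu_1+k_{N+1}}\cdots Y_{-\mu_l+k_{N+l}}|0\rangle,
\]
and antisymmetrizing via \eqref{e:sp1} yields exactly $\det(c_{ij})$ with the two blocks of rows as in the statement. This factorization is the missing ingredient in your argument.
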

\begin{proof} We only prove \eqref{e:sp20}, as the others can be handled similarly. By definition \eqref{e:ve1}, we have
\begin{align*}
\Gamma_-(\{x\})=\frac{1}{\prod_{1\leq i<j\leq N}(1-x_ix_j)\big(1-x^{-1}_ix_j\big)}Y(x_1)\cdots Y(x_N)\Gamma_+\big(\big\{x^\pm\big\}\big),
\end{align*}
where $x^\pm=\big(x^\pm_1,\dots,x^\pm_N\big)$. It is easy to check that (by vertex operator calculus)
\begin{align*}
\Gamma_+\big(\big\{x^\pm\big\}\big)Y(z)=\frac{1}{\prod^N_{i=1}(1-x_iz)\big(1-x^{-1}_iz\big)}Y(z)\Gamma_+\big(\big\{x^\pm\big\}\big),
\end{align*}
thus
\begin{align*}
\Gamma_+\big(\big\{x^\pm\big\}\big)Y_n=\sum_{i\geq 0}h_i\big(x^\pm\big)Y_{n+i}\Gamma_+\big(\big\{x^\pm\big\}\big).
\end{align*}
Then
\begin{align*}
&\prod_{1\leq i<j\leq N}(1-x_ix_j)\big(1-x^{-1}_ix_j\big)\Gamma_-(\{x\})|\mu^{\rm sp}\rangle\\
&\qquad{}=Y(x_1)\cdots Y(x_N)\sum_{k_{N+1},\dots,k_{N+l}\geq 0}h_{k_{N+1}}\big(x^\pm\big)\cdots h_{k_{N+l}}\big(x^\pm\big)Y_{-\mu_1+k_{N+1}}\cdots Y_{-\mu_l+k_{N+l}}|0\rangle\\
&\qquad{}=\sum_{k_1,\dots,k_N\in \mathbb{Z}}x^{k_1}_1\cdots x^{k_N}_N\sum_{k_{N+1},\dots,k_{N+l}\geq 0}h_{k_{N+1}}\big(x^\pm\big)\cdots h_{k_{N+l}}\big(x^\pm\big)\\
&\qquad\quad{} \times Y_{-k_1}\cdots Y_{-k_N} Y_{-\mu_1+k_{N+1}}\cdots Y_{-\mu_l+k_{N+l}}|0\rangle.
\end{align*}
By \eqref{e:sp1}, the coefficient of $|\lambda^{\rm sp}\rangle$ in $\prod_{1\leq i<j\leq N}(1-x_ix_j)\big(1-x^{-1}_ix_j\big)\Gamma_-(\{x\})|\mu^{\rm sp}\rangle$ is
\begin{align*}
\det(c_{ij})^{l+N}_{i,j=1}
\end{align*}
with
\[
c_{ij}=\begin{cases}
x^{\lambda_j-j+i}_i, & 1\leq i\leq N,\\
h_{\mu_{i-N}-\lambda_j+j-i}\big(x^{\pm}\big), & N+1\leq i\leq N+l.
\end{cases}
\]
By means of the Vandermonde identity, we can get \eqref{e:sp20}.
\end{proof}

\begin{Remark}
The symmetric rational function ${\rm S}^*_{\lambda/\mu}(x)$ makes sense without the restriction $\mu\subset\lambda$, while ${\rm s}_{\lambda/\mu}(x)$ is zero unless $\mu\subset\lambda$. Unlike the skew symplectic Schur functions ${\rm sp}_{\lambda/\mu}\big(x^{\pm}\big)$, the symmetric
polynomials ${\rm S}^*_{\lambda/\mu}(x_1,\dots,x_N)$
do not change if we add zeroes to $\lambda$ and $\mu$ simultaneously.
More precisely,
\[
{\rm S}^*_{(\lambda_1,\dots,\lambda_{M+N-1},\lambda_{M+N})/(\mu_1,\dots,\mu_{M-1},0)}(x_1,\dots,x_N)=0,
\]
 unless $\lambda_{M+N}=0$, and
\[
{\rm S}^*_{(\lambda_1,\dots,\lambda_{M+N-1},0)/(\mu_1,\dots,\mu_{M-1},0)}(x_1,\dots,x_N)={\rm S}^*_{(\lambda_1,\dots,\lambda_{M+N-1})/(\mu_1,\dots,\mu_{M-1})}(x_1,\dots,x_N).
\]
Moreover,
${\rm S}^*_{(\lambda_1,\dots,\lambda_{M+N})/(0^M)}(x_1,\dots,x_N)=0$ unless $\lambda_{N+1}=\cdots=\lambda_{M+N}=0$,
 and
 \[
 {\rm S}^*_{(\lambda_1,\dots,\lambda_{N},0^M)/(0^M)}(x_1,\dots,x_N)={\rm S}^*_{(\lambda_1,\dots,\lambda_{N})/\varnothing}(x_1,\dots,x_N)
 ={\rm s}_{(\lambda_1,\dots,\lambda_{N})}(x_1,\dots,x_N).
 \]
\end{Remark}
We now offer a vertex algebraic proof of the Cauchy-type identity for skew Schur functions \mbox{\cite[p.~93]{Mac1995}}, which can be seen as a generalized Cauchy identity for Schur functions.
\begin{Theorem}For two partitions $\lambda$ and $\mu$,
\[
\sum_\rho {\rm s}_{\rho/\lambda}(x){\rm s}_{\rho/\mu}(y)=\prod_{i,j}(1-x_iy_j)^{-1}\sum_\nu {\rm s}_{\mu/\tau}(x){\rm s}_{\lambda/\tau}(y),
\]
where the sum is over all partitions $\rho$ and $\tau$ such that the skew Schur functions are defined.
\end{Theorem}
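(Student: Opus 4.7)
The strategy is to compute the matrix element
\[
\langle \lambda | \Gamma_+(\{x\}) \Gamma_-(\{y\}) | \mu \rangle
\]
in two different ways and equate the results. Both evaluations rely only on tools already established in the paper: Proposition~\ref{pro10} (which identifies matrix elements of $\Gamma_+(\{x\})$ with skew Schur functions), the formula \eqref{e:sc19} for $\Gamma_-(\{x\})|\mu\rangle$, the orthonormality relation \eqref{e:or}, and the standard vertex operator commutation relation.

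\textbf{First evaluation.} I would insert the completeness expansion obtained from Theorem~\ref{th2}: since $\{|\rho\rangle\}$ and $\{\langle\rho|\}$ form dual orthonormal systems indexed by partitions, any vector $\Gamma_-(\{y\})|\mu\rangle$ decomposes as $\sum_\rho \langle\rho|\Gamma_-(\{y\})|\mu\rangle\,|\rho\rangle$, and by \eqref{e:sc19} the coefficient is $\mathrm{s}_{\rho/\mu}(y)$. Combined with Proposition~\ref{pro10} which yields $\langle\lambda|\Gamma_+(\{x\})|\rho\rangle = \mathrm{s}_{\rho/\lambda}(x)$, this gives
\[
\langle \lambda | \Gamma_+(\{x\}) \Gamma_-(\{y\}) | \mu \rangle = \sum_\rho \mathrm{s}_{\rho/\lambda}(x)\,\mathrm{s}_{\rho/\mu}(y),
\]
the left-hand side of the desired identity.

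\textbf{Second evaluation.} I would commute the two half vertex operators past one another. A direct computation from \eqref{e:he1} and \eqref{e:eq10} (essentially the content of Lemma~\ref{le}) gives $\Gamma_+(z)\Gamma_-(w) = (1-zw)^{-1}\Gamma_-(w)\Gamma_+(z)$, hence
\[
\Gamma_+(\{x\})\Gamma_-(\{y\}) = \prod_{i,j}(1-x_iy_j)^{-1}\,\Gamma_-(\{y\})\Gamma_+(\{x\}).
\]
Inserting the completeness expansion between $\Gamma_-(\{y\})$ and $\Gamma_+(\{x\})$ and applying Proposition~\ref{pro10} together with \eqref{e:sc19} again, one obtains
\[
\langle \lambda | \Gamma_+(\{x\}) \Gamma_-(\{y\}) | \mu \rangle = \prod_{i,j}(1-x_iy_j)^{-1}\sum_\tau \mathrm{s}_{\lambda/\tau}(y)\,\mathrm{s}_{\mu/\tau}(x),
\]
which is the right-hand side. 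Equating the two expressions completes the proof.

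The only subtle point, which I expect to be the main obstacle to write down cleanly, is the justification that the formal insertion $\sum_\rho|\rho\rangle\langle\rho|=\mathrm{id}$ applied inside this matrix element produces a convergent (in fact finite, for each homogeneous component) expansion. This follows because, for fixed $\lambda$ and $\mu$, $\mathrm{s}_{\rho/\lambda}$ and $\mathrm{s}_{\rho/\mu}$ vanish unless $\lambda,\mu\subset\rho$, and each homogeneous degree of $\Gamma_-(\{y\})|\mu\rangle$ lies in a finite-dimensional graded subspace of $\mathcal{M}$; the same remark applies to the second evaluation, with $\tau\subset\lambda,\mu$. Once this is noted, both computations reduce to the same purely algebraic manipulation and the identity follows.
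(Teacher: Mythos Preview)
Your proposal is correct and follows essentially the same route as the paper's own proof: both compute $\langle\lambda|\Gamma_+(\{x\})\Gamma_-(\{y\})|\mu\rangle$ by inserting the resolution of the identity $\sum_\rho|\rho\rangle\langle\rho|$ once directly and once after commuting $\Gamma_+$ past $\Gamma_-$, then invoke Proposition~\ref{pro10} and~\eqref{e:sc19}. Your added remark on finiteness of the expansions is a nice clarification that the paper leaves implicit.
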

\begin{proof}
From \eqref{e:or}, we can easily deduce that
\begin{align*}%\label{e:sc16}
\langle\lambda|\sum_{\rho}|\rho\rangle\langle\rho|=\langle\lambda|,\qquad \sum_{\rho}|\rho\rangle\langle\rho||\lambda\rangle=|\lambda\rangle,
\end{align*}
where the sum is over all partitions $\rho$.
Thus
\begin{align}\label{e:sc17}
\langle \lambda|\Gamma_+(\{x\})\Gamma_-(\{y\})|\mu\rangle
=\langle \lambda|\Gamma_+(\{x\})\sum_{\rho}|\rho\rangle\langle\rho|\Gamma_-(\{y\})|\mu\rangle
=\sum_\rho {\rm s}_{\rho/\lambda}(x){\rm s}_{\rho/\mu}(y),
\end{align}
where we have used \eqref{e:sc6}. We also have the following relation:
\begin{align}
\notag\langle \lambda|\Gamma_+(\{x\})\Gamma_-(\{y\})|\mu\rangle &{}= \prod_{i,j}(1-x_iy_j)^{-1}\langle \lambda|\Gamma_-(\{y\})\Gamma_+(\{x\})|\mu\rangle\\
\notag &{}=\prod_{i,j}(1-x_iy_j)^{-1}\langle \lambda|\Gamma_-(\{y\})\sum_{\tau}|\tau\rangle\langle\tau|\Gamma_+(\{x\})|\mu\rangle\\
 &{}= \prod_{i,j}(1-x_iy_j)^{-1}\sum_\nu {\rm s}_{\mu/\tau}(x){\rm s}_{\lambda/\tau}(y).\label{e:sc18}
\end{align}
Comparing \eqref{e:sc17} and \eqref{e:sc18}, we get the identity.
\end{proof}

We also extend the vertex algebraic approach to derive the Cauchy-type identities for skew symplectic/orthogonal Schur functions.
\begin{Theorem}For generalized partition $\mu=(\mu_1,\dots,\mu_l)$ and $\lambda=(\lambda_1,\dots,\lambda_{l+N})$, one has
\begin{gather}
\sum_{\rho=(\rho_1,\dots,\rho_{l+N}) }{\rm sp}_{\rho/\mu}\big(x^{\pm}\big){\rm S}^*_{\rho/\lambda}(y){}\nonumber\\
\qquad\quad{}=\prod^N_{i=1}\prod^K_{j=1}\frac{1}{(1-x_iy_j)(1-x^{-1}_iy_j)}\sum_{\tau=(\tau_1,\dots,\tau_l)}{\rm sp}_{\lambda/\tau}\big(x^{\pm}\big){\rm S}^*_{\mu/\tau}(y),\label{e:sp16}\\
\sum_{\rho=(\rho_1,\dots,\rho_{l+N})}{\rm o}_{\rho/\mu}\big(x^{\pm}\big){\rm S}^*_{\rho/\lambda}(y)\nonumber\\
\qquad\quad{}=\prod^N_{i=1}\prod^K_{j=1}\frac{1}{(1-x_iy_j)(1-x^{-1}_iy_j)}\sum_{\tau=(\tau_1,\dots,\tau_l)}{\rm o}_{\lambda/\tau}\big(x^{\pm}\big){\rm S}^*_{\mu/\tau}(y),\label{e:o12}
\end{gather}
where $x^{\pm}=\big(x^{\pm}_1,\dots,x^{\pm}_N\big)$, $y=(y_1,\dots,y_K)$.
\end{Theorem}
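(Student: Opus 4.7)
The plan is to compute the matrix element $\langle\mu^{\rm sp}|\Gamma_+(\{x^{\pm}\})\Gamma_-(\{y\})|\lambda^{\rm sp}\rangle$ in two different ways and equate the results, in direct analogy with the vertex-operator proof of the skew Schur Cauchy identity that immediately precedes the theorem (equations \eqref{e:sc17}--\eqref{e:sc18}).

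For the first computation, I would insert the resolution of the identity $\sum_\rho |\rho^{\rm sp}\rangle\langle\rho^{\rm sp}|$ from \eqref{e:sp24} between $\Gamma_+(\{x^{\pm}\})$ and $\Gamma_-(\{y\})$. Then $\langle\mu^{\rm sp}|\Gamma_+(\{x^{\pm}\})|\rho^{\rm sp}\rangle={\rm sp}_{\rho/\mu}(x^{\pm})$ by the definition \eqref{e:sp21}, while the formula \eqref{e:sp20} gives $\langle\rho^{\rm sp}|\Gamma_-(\{y\})|\lambda^{\rm sp}\rangle = \prod_{1\le i<j\le K}(1-y_iy_j)^{-1}\,{\rm S}^*_{\rho/\lambda}(y)$ (with $\rho$ and $\lambda$ interpreted with appropriate trailing zeros). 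For the second computation, I would first commute past using the standard Heisenberg identity $\Gamma_+(\{x^{\pm}\})\Gamma_-(\{y\})=\prod_{i,j}[(1-x_iy_j)(1-x_i^{-1}y_j)]^{-1}\,\Gamma_-(\{y\})\Gamma_+(\{x^{\pm}\})$, which is obtained by applying $\Gamma_+(z)\Gamma_-(w)=(1-zw)^{-1}\Gamma_-(w)\Gamma_+(z)$ to each pair of arguments, and then insert $\sum_\tau |\tau^{\rm sp}\rangle\langle\tau^{\rm sp}|$ between $\Gamma_-(\{y\})$ and $\Gamma_+(\{x^{\pm}\})$. Using \eqref{e:sp20} and \eqref{e:sp21} again yields
\[
\prod_{i,j}\frac{1}{(1-x_iy_j)(1-x_i^{-1}y_j)}\prod_{1\le i<j\le K}(1-y_iy_j)^{-1}\sum_\tau {\rm S}^*_{\mu/\tau}(y)\,{\rm sp}_{\lambda/\tau}(x^{\pm}).
\]
Equating the two expressions and canceling the common Vandermonde-type factor $\prod_{1\le i<j\le K}(1-y_iy_j)^{-1}$ delivers the symplectic identity \eqref{e:sp16}. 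The orthogonal identity \eqref{e:o12} follows by the identical strategy, using \eqref{e:o24} and the orthogonal analogue of \eqref{e:sp20}; there the extraneous factor $\prod_{1\le i\le j\le K}(1-y_iy_j)^{-1}$ occurs on both sides and cancels in the same way.

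The main obstacle is bookkeeping rather than conceptual: one must check that the resolutions of the identity \eqref{e:sp24} and \eqref{e:o24} are applied on subspaces indexed by generalized partitions of the correct length (so that the inserted $|\rho^{\rm sp}\rangle\langle\rho^{\rm sp}|$ and $|\tau^{\rm sp}\rangle\langle\tau^{\rm sp}|$ genuinely act as the identity on the relevant vectors in $\widetilde{\mathcal{M}^*}$), and that the Vandermonde-type prefactors coming from \eqref{e:sp20} on each side are precisely the same so that the cancellation is clean. This careful length tracking, together with the remark that ${\rm S}^*_{\rho/\lambda}$ is insensitive to simultaneous zero-padding, is what makes the final identity symmetric in $\mu\leftrightarrow\lambda$ after accounting for the Heisenberg commutation factor.
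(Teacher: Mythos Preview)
Your proposal is correct and follows essentially the same approach as the paper: compute $\langle\mu^{\rm sp}|\Gamma_+(\{x^{\pm}\})\Gamma_-(\{y\})|\lambda^{\rm sp}\rangle$ two ways, inserting the resolutions \eqref{e:sp24}, \eqref{e:o24} on each side of the Heisenberg commutation, and invoke \eqref{e:sp21} and \eqref{e:sp20} (resp.\ their orthogonal analogues). In fact you are slightly more explicit than the paper in tracking the common factor $\prod_{i<j}(1-y_iy_j)^{-1}$ (resp.\ $\prod_{i\le j}(1-y_iy_j)^{-1}$) arising from \eqref{e:sp20} on both sides, which the paper silently cancels.
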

\begin{proof}For generalized partition $\nu=(\nu_1,\dots,\nu_n)$, recall the relations \eqref{e:sp24} and \eqref{e:o24}
\begin{alignat*}{3}
&\langle\nu^{\rm sp}|\sum_{\eta=(\eta_1,\dots,\eta_n)}|\eta^{\rm sp}\rangle\langle\eta^{\rm sp}| = \langle\nu^{\rm sp}|,\qquad&& \sum_{\eta=(\eta_1,\dots,\eta_n)}|\eta^{\rm sp}\rangle\langle\eta^{\rm sp}||\nu^{\rm sp}\rangle = |\nu^{\rm sp}\rangle,&\\
&\langle\nu^{\rm o}|\sum_{\eta=(\eta_1,\dots,\eta_n)}|\eta^{\rm o}\rangle\langle\eta^{\rm o}| = \langle\nu^{\rm o}|,\qquad&& \sum_{\eta=(\eta_1,\dots,\eta_n)}|\eta^{\rm o}\rangle\langle\eta^{\rm o}||\nu^{\rm o}\rangle = |\nu^{\rm o}\rangle,&
\end{alignat*}
where the sum is over all generalized partitions $\eta=(\eta_1,\dots,\eta_n)$.
On the one hand,
\begin{align}\label{e:sp17}
\notag\langle\mu^{\rm sp}|\Gamma_+\big(\big\{x^{\pm}\big\}\big)\Gamma_-(\{y\})|\lambda^{\rm sp}\rangle=~&\langle\mu^{\rm sp}|\Gamma_+\big(\big\{x^{\pm}\big\}\big)\sum_{\rho=(\rho_1,\dots,\rho_{l+N}) }|\rho^{\rm sp}\rangle\langle\rho^{\rm sp}|\Gamma_-(\{y\})|\lambda^{\rm sp}\rangle\\
=~&\sum_{\rho=(\rho_1,\dots,\rho_{l+N})}{\rm sp}_{\rho/\mu}\big(x^{\pm}\big){\rm S}^*_{\rho/\lambda}(y),
\end{align}
where we used \eqref{e:sp15} and \eqref{e:sp20}. On the other hand,
\begin{align}
&\langle\mu^{\rm sp}|\Gamma_+\big(\big\{x^{\pm}\big\}\big)\Gamma_-(\{y\})|\lambda^{\rm sp}\rangle \nonumber\\
&\qquad\quad{}=\prod^N_{i=1}\prod^K_{j=1}\frac{1}{(1-x_iy_j)\big(1-x^{-1}_iy_j\big)}\langle\mu^{\rm sp}|\Gamma_-(\{y\})\sum_{\tau=(\tau_1,\dots,\tau_l) }|\tau^{\rm sp}\rangle\langle\tau^{\rm sp}|\Gamma_+\big(\big\{x^{\pm}\big\}\big)|\lambda^{\rm sp}\rangle \nonumber\\
&\qquad\quad{}=\prod^N_{i=1}\prod^K_{j=1}\frac{1}{(1-x_iy_j)\big(1-x^{-1}_iy_j\big)}\sum_{\tau=(\tau_1,\dots,\tau_l)}{\rm sp}_{\lambda/\tau}\big(x^{\pm}\big){\rm S}^*_{\mu/\tau}(y).\label{e:sp18}
\end{align}
Comparing \eqref{e:sp17} with \eqref{e:sp18}, we can get \eqref{e:sp16}. The relation \eqref{e:o12} can be proved similarly.
\end{proof}

\begin{Remark} The generalized partition $\tau$ in the right-hand side of \eqref{e:sp18} is contained in the generalized partition $\lambda$. For $\mu=\big(0^N\big)$, $\lambda=\big(0^N\big)$, relation \eqref{e:sp18} recovers the classical Cauchy identity for Schur functions
\begin{align*}
\sum_{\rho=(\rho_1,\dots,\rho_{N}) }{\rm s}_{\rho}\big(x_1,x^{-1}_1,\dots,x_N,x^{-1}_N\big){\rm s}_{\rho}(y_1,\dots,y_N)
=\frac{1}{\prod^N_{i=1}\prod^N_{j=1}(1-x_iy_j)\big(1-x^{-1}_iy_j\big)}
\end{align*}
due to Remark~\ref{e:remark1} and the fact that the generalized partition $\tau$ have to be $\big(0^N\big)$.
\end{Remark}
\begin{Remark}

In~\cite{Lam2006}, Lam studied some general symmetric functions via a boson-fermion correspondence. It will be interesting to study and realize the
skew symmetric functions using vertex operators.
\end{Remark}

{\bf Added in proof}. In the current paper posted on arXiv in 2022, among other things we have derived the Jacobi--Trudi formulas for the skew Schur symplectic functions and the skew Schur orthogonal functions in terms of the
(generalized) homogeneous symmetric functions. We recently noticed that~\cite{AFHS} has found a combinatorial proof of the alternative Jacobi--Trudi formulas in terms of elementary symmetric functions in 2023.
We will also give a vertex algebraic proof of the
latter in a forthcoming paper.

\appendix

\section{Another proof of Proposition~\ref{pro10}}\label{appA}
In this appendix, we give a simple vertex algebraic proof of Proposition~\ref{pro10} and \eqref{e:sc19}.
\begin{proof}
From the definition of $X(z)$, we have
\begin{align*}
\Gamma_+(\{x\})X(z)=\prod^N_{i=1}\frac{1}{1-x_iz}X(z)\Gamma_+(\{x\}),
\end{align*}
i.e.,
\begin{align*}
\Gamma_+(\{x\})X_n=\sum_{i\geq 0}h_i(x)X_{n+i}\Gamma_+(\{x\}).
\end{align*}
Thus
\begin{gather*}
\Gamma_+(\{x\})|\lambda\rangle
 =\Gamma_+(\{x\})X_{-\lambda_1}\dots X_{-\lambda_k}|0\rangle\\
 =\sum_{i_1,\dots,i_k\geq 0}h_{i_1}(x)\cdots h_{i_k}(x)X_{-\lambda_1+i_1}\dots X_{-\lambda_k+i_k}|0\rangle
\\
= \sum_{\nu}\!\sum_{\sigma\in S_k}\!\!\epsilon(\sigma)h_{\lambda_1-\nu_{\sigma(1)}+\sigma(1)-1}(x)\!\cdots h_{\lambda_k-\nu_{\sigma(k)}+\sigma(k)-k}(x)X_{-\nu_{\sigma(1)}+\sigma(1)-1}\!\cdots X_{-\nu_{\sigma(k)}+\sigma(k)-k}|0\rangle\\
= \sum_{\nu}\det(h_{\lambda_i-\nu_j-i+j}(x))^k_{i,j=1}|\nu\rangle = \sum_{\nu}{\rm s}_{\lambda/\nu}(x)|\nu\rangle,
\end{gather*}
where we have used \eqref{e:sc15} and the fact $\Gamma_+(\{x\})|0\rangle=|0\rangle$. By \eqref{e:or}, we have shown Proposition~\ref{pro10}. Similarly, using~\eqref{e:sc5} and~\eqref{e:or}, we can get
\begin{align*}
\langle\lambda|\Gamma_-(\{x\})|\mu\rangle = {\rm s}_{\lambda/\mu}(x).
\end{align*}
It is clear that
\begin{align*}
\Gamma_-(\{x\})|\mu\rangle = \sum_{\mu\subset \lambda}{\rm s}_{\lambda/\mu}(x)|\lambda\rangle.\tag*{\qed}
\end{align*}\renewcommand{\qed}{}
\end{proof}

\subsection*{Acknowledgements}

We extend our heartfelt appreciation to the anonymous referees for their invaluable constructive feedback and suggestions, which have enhanced the quality of the paper.
The research is supported by the Simons Foundation (grant no.\ MP-TSM-00002518), NSFC (grant nos.\ 12171303, 12101231, 12301033), and NSF of Huzhou (grant no.\ 2022YZ47).

\pdfbookmark[1]{References}{ref}
\LastPageEnding

\end{document}